\newif\ifcref\creftrue
\def\definetac{\newif\iftac}    
\def\definebeamer{\newif\ifbeamer}
\def\definecref{\newif\ifcref}
\else\usepackage{amsthm}\fi
  \definecolor{darkgreen}{rgb}{0,0.45,0} 
\else\usepackage[pagebackref,colorlinks,citecolor=darkgreen,linkcolor=darkgreen]{hyperref}\fi
  \renewcommand*{\backref}[1]{}
  \renewcommand*{\backrefalt}[4]{({%
      \ifcase #1 not cited.%
            \or cited on p.~#2%
            \else cited on pp.~#2%
      \fi%
    })}
\let\setof\Set
\ifcref\usepackage[sort]{cleveref}\usepackage{aliascnt}\fi
\let\ea\expandafter
\def\mdef#1#2{\ea\ea\ea\gdef\ea\ea\noexpand#1\ea{\ea\ensuremath\ea{#2}\xspace}}
\def\alwaysmath#1{\ea\ea\ea\global\ea\ea\ea\let\ea\ea\csname your@#1\endcsname\csname #1\endcsname
  \ea\def\csname #1\endcsname{\ensuremath{\csname your@#1\endcsname}\xspace}}
\DeclareRobustCommand\widecheck[1]{{\mathpalette\@widecheck{#1}}}
\def\@widecheck#1#2{%
    \setbox\z@\hbox{\m@th$#1#2$}%
    \setbox\tw@\hbox{\m@th$#1%
       \widehat{%
          \vrule\@width\z@\@height\ht\z@
          \vrule\@height\z@\@width\wd\z@}$}%
    \dp\tw@-\ht\z@
    \@tempdima\ht\z@ \advance\@tempdima2\ht\tw@ \divide\@tempdima\thr@@
    \setbox\tw@\hbox{%
       \raise\@tempdima\hbox{\scalebox{1}[-1]{\lower\@tempdima\box
\tw@}}}%
    {\ooalign{\box\tw@ \cr \box\z@}}}
\def\foreachletter#1#2#3{\foreachcount=#1
  \ea\loop\ea\ea\ea#3\@alph\foreachcount
  \advance\foreachcount by 1
  \ifnum\foreachcount<#2\repeat}
\def\foreachLetter#1#2#3{\foreachcount=#1
  \ea\loop\ea\ea\ea#3\@Alph\foreachcount
  \advance\foreachcount by 1
  \ifnum\foreachcount<#2\repeat}
\def\definescr#1{\ea\gdef\csname s#1\endcsname{\ensuremath{\mathscr{#1}}\xspace}}
\def\definecal#1{\ea\gdef\csname c#1\endcsname{\ensuremath{\mathcal{#1}}\xspace}}
\def\definebold#1{\ea\gdef\csname b#1\endcsname{\ensuremath{\mathbf{#1}}\xspace}}
\def\definebb#1{\ea\gdef\csname d#1\endcsname{\ensuremath{\mathbb{#1}}\xspace}}
\def\definefrak#1{\ea\gdef\csname f#1\endcsname{\ensuremath{\mathfrak{#1}}\xspace}}
\def\definesf#1{\ea\gdef\csname i#1\endcsname{\ensuremath{\mathsf{#1}}\xspace}}
\def\definebar#1{\ea\gdef\csname #1bar\endcsname{\ensuremath{\overline{#1}}\xspace}}
\def\definetil#1{\ea\gdef\csname #1til\endcsname{\ensuremath{\widetilde{#1}}\xspace}}
\def\definehat#1{\ea\gdef\csname #1hat\endcsname{\ensuremath{\widehat{#1}}\xspace}}
\def\definechk#1{\ea\gdef\csname #1chk\endcsname{\ensuremath{\widecheck{#1}}\xspace}}
\def\defineul#1{\ea\gdef\csname u#1\endcsname{\ensuremath{\underline{#1}}\xspace}}
\def\autofmt@n#1\autofmt@end{\mathrm{#1}}
\def\autofmt@b#1\autofmt@end{\mathbf{#1}}
\def\autofmt@d#1#2\autofmt@end{\mathbb{#1}\mathsf{#2}}
\def\autofmt@c#1#2\autofmt@end{\mathcal{#1}\mathit{#2}}
\def\autofmt@s#1#2\autofmt@end{\mathscr{#1}\mathit{#2}}
\def\autofmt@f#1\autofmt@end{\mathsf{#1}}
\def\autofmt@k#1\autofmt@end{\mathfrak{#1}}
\def\autofmt@u#1\autofmt@end{\underline{\smash{\mathsf{#1}}}}
\def\autofmt@U#1\autofmt@end{\underline{\underline{\smash{\mathsf{#1}}}}}
\def\autofmt@h#1\autofmt@end{\widehat{#1}}
\def\autofmt@r#1\autofmt@end{\overline{#1}}
\def\autofmt@t#1\autofmt@end{\widetilde{#1}}
\def\autofmt@k#1\autofmt@end{\check{#1}}
\def\auto@drop#1{}
\def\autodef#1{\ea\ea\ea\@autodef\ea\ea\ea#1\ea\auto@drop\string#1\autodef@end}
\def\@autodef#1#2#3\autodef@end{%
  \ea\def\ea#1\ea{\ea\ensuremath\ea{\csname autofmt@#2\endcsname#3\autofmt@end}\xspace}}
\def\autodefs@end{blarg!}
\def\autodefs#1{\@autodefs#1\autodefs@end}
\def\@autodefs#1{\ifx#1\autodefs@end%
  \def\autodefs@next{}%
  \else%
  \def\autodefs@next{\autodef#1\@autodefs}%
  \fi\autodefs@next}
\DeclareFontFamily{U}{mathb}{\hyphenchar\font45}
\DeclareFontShape{U}{mathb}{m}{n}{
      <5> <6> <7> <8> <9> <10> gen * mathb
      <10.95> mathb10 <12> <14.4> <17.28> <20.74> <24.88> mathb12
      }{}
\DeclareSymbolFont{mathb}{U}{mathb}{m}{n}
\DeclareMathSymbol{\boxleft}      {2}{mathb}{"68}
\DeclareMathSymbol{\boxright}     {2}{mathb}{"69}
\DeclareMathSymbol{\boxtop}       {2}{mathb}{"6A}
\DeclareMathSymbol{\boxbot}       {2}{mathb}{"6B}
\DeclareSymbolFont{bbold}{U}{bbold}{m}{n}
\DeclareSymbolFontAlphabet{\mathbbb}{bbold}
\newcommand{\done}{\ensuremath{\mathbbb{1}}\xspace}
\mdef\delbar{\overline{\partial}}
\newcommand{\inv}{^{-1}}
\mdef\hf{\textstyle\frac12 }
\mdef\thrd{\textstyle\frac13 }
\mdef\qtr{\textstyle\frac14 }
\newcommand{\op}{^{\mathrm{op}}}
\mdef\Id{\mathrm{Id}}
\mdef\id{\mathrm{id}}
\def\frc#1/#2.{\frac{#1}{#2}}   
\mdef\ten{\mathrel{\otimes}}
\mdef\sqten{\mathrel{\boxtimes}}
\DeclareFontFamily{U}{mathb}{\hyphenchar\font45}
\DeclareFontShape{U}{mathb}{m}{n}{
      <5> <6> <7> <8> <9> <10> gen * mathb
      <10.95> mathb10 <12> <14.4> <17.28> <20.74> <24.88> mathb12
      }{}
\DeclareSymbolFont{mathb}{U}{mathb}{m}{n}
\DeclareMathSymbol{\dotplus}       {2}{mathb}{"00}
\DeclareMathSymbol{\dotdiv}        {2}{mathb}{"01}
\DeclareMathSymbol{\dottimes}      {2}{mathb}{"02}
\DeclareMathSymbol{\divdot}        {2}{mathb}{"03}
\DeclareMathSymbol{\udot}          {2}{mathb}{"04}
\DeclareMathSymbol{\square}        {2}{mathb}{"05}
\DeclareMathSymbol{\Asterisk}      {2}{mathb}{"06}
\DeclareMathSymbol{\bigast}        {1}{mathb}{"06}
\DeclareMathSymbol{\coAsterisk}    {2}{mathb}{"07}
\DeclareMathSymbol{\bigcoast}      {1}{mathb}{"07}
\DeclareMathSymbol{\circplus}      {2}{mathb}{"08}
\DeclareMathSymbol{\pluscirc}      {2}{mathb}{"09}
\DeclareMathSymbol{\convolution}   {2}{mathb}{"0A}
\DeclareMathSymbol{\divideontimes} {2}{mathb}{"0B}
\DeclareMathSymbol{\blackdiamond}  {2}{mathb}{"0C}
\DeclareMathSymbol{\sqbullet}      {2}{mathb}{"0D}
\DeclareMathSymbol{\bigstar}       {2}{mathb}{"0E}
\DeclareMathSymbol{\bigvarstar}    {2}{mathb}{"0F}
\DeclareMathSymbol{\corresponds}   {3}{mathb}{"1D}
\DeclareMathSymbol{\updownarrows}          {3}{mathb}{"D6}
\DeclareMathSymbol{\downuparrows}          {3}{mathb}{"D7}
\DeclareMathSymbol{\Lsh}                   {3}{mathb}{"E8}
\DeclareMathSymbol{\Rsh}                   {3}{mathb}{"E9}
\DeclareMathSymbol{\dlsh}                  {3}{mathb}{"EA}
\DeclareMathSymbol{\drsh}                  {3}{mathb}{"EB}
\DeclareMathSymbol{\looparrowdownleft}     {3}{mathb}{"EE}
\DeclareMathSymbol{\looparrowdownright}    {3}{mathb}{"EF}
\DeclareMathSymbol{\curvearrowleftright}   {3}{mathb}{"F2}
\DeclareMathSymbol{\curvearrowbotleft}     {3}{mathb}{"F3}
\DeclareMathSymbol{\curvearrowbotright}    {3}{mathb}{"F4}
\DeclareMathSymbol{\curvearrowbotleftright}{3}{mathb}{"F5}
\DeclareMathSymbol{\leftsquigarrow}        {3}{mathb}{"F8}
\DeclareMathSymbol{\rightsquigarrow}       {3}{mathb}{"F9}
\DeclareMathSymbol{\leftrightsquigarrow}   {3}{mathb}{"FA}
\DeclareMathSymbol{\lefttorightarrow}      {3}{mathb}{"FC}
\DeclareMathSymbol{\righttoleftarrow}      {3}{mathb}{"FD}
\DeclareMathSymbol{\uptodownarrow}         {3}{mathb}{"FE}
\DeclareMathSymbol{\downtouparrow}         {3}{mathb}{"FF}
\DeclareMathSymbol{\varhash}       {0}{mathb}{"23}
\DeclareMathSymbol{\nsqsubseteq}    {3}{mathb}{"86}
\DeclareMathSymbol{\nsqsupseteq}    {3}{mathb}{"87}
\DeclareMathSymbol{\sqSubset}       {3}{mathb}{"94}
\DeclareMathSymbol{\sqSupset}       {3}{mathb}{"95}
\DeclareMathSymbol{\nsqSubset}      {3}{mathb}{"96}
\DeclareMathSymbol{\nsqSupset}      {3}{mathb}{"97}
\let\imp\Rightarrow
\mdef\we{\overset{\sim}{\longrightarrow}}
\mdef\leftwe{\overset{\sim}{\longleftarrow}}
\let\xto\xrightarrow
\def\rightarrowtailfill@{\arrowfill@{\Yright\joinrel\relbar}\relbar\rightarrow}
\newcommand\xrightarrowtail[2][]{\ext@arrow 0055{\rightarrowtailfill@}{#1}{#2}}
\def\twoheadrightarrowfill@{\arrowfill@{\relbar\joinrel\relbar}\relbar\twoheadrightarrow}
\newcommand\xtwoheadrightarrow[2][]{\ext@arrow 0055{\twoheadrightarrowfill@}{#1}{#2}}
\def\slashedarrowfill@#1#2#3#4#5{%
  $\m@th\thickmuskip0mu\medmuskip\thickmuskip\thinmuskip\thickmuskip
   \relax#5#1\mkern-7mu%
   \cleaders\hbox{$#5\mkern-2mu#2\mkern-2mu$}\hfill
   \mathclap{#3}\mathclap{#2}%
   \cleaders\hbox{$#5\mkern-2mu#2\mkern-2mu$}\hfill
   \mkern-7mu#4$%
}
\def\rightslashedarrowfill@{%
  \slashedarrowfill@\relbar\relbar\mapstochar\rightarrow}
\newcommand\xslashedrightarrow[2][]{%
  \ext@arrow 0055{\rightslashedarrowfill@}{#1}{#2}}
\mdef\hto{\xslashedrightarrow{}}
\mdef\htoo{\xslashedrightarrow{\quad}}
\def\toiso{\xto{\smash{\raisebox{-.5mm}{$\scriptstyle\sim$}}}}
\def\jd#1{\@jd#1\ej}
\def\@jd#1|-#2\ej{\@@jd#1,,\;\vdash\;\left(#2\right)}
\def\@@jd#1,{\@ifmtarg{#1}{\let\next=\relax}{\left(#1\right)\let\next=\@@@jd}\next}
\def\@@@jd#1,{\@ifmtarg{#1}{\let\next=\relax}{,\,\left(#1\right)\let\next=\@@@jd}\next}
\def\jdm#1{\@jdm#1\ej}
\def\@jdm#1|-#2\ej{\@@jd#1,,\\\vdash\;\left(#2\right)}
\def\cm{,}
\long\def\my@drawfill#1#2;{%
\@skipfalse
\fill[#1,draw=none] #2;
\@skiptrue
\draw[#1,fill=none] #2;
}
\newif\if@skip
\newcommand{\skipit}[1]{\if@skip\else#1\fi}
\newcommand{\drawfill}[1][]{\my@drawfill{#1}}
\newcounter{nodemaker}
\def\renumberthms#1{}
\newif\ifhyperref
  \let\your@state\state
  \def\state#1{\my@state#1}
  \def\my@state#1.{\gdef\currthmtype{#1}\your@state{#1.}}
  \let\your@staterm\staterm
  \def\staterm#1{\my@staterm#1}
  \def\my@staterm#1.{\gdef\currthmtype{#1}\your@staterm{#1.}}
  \let\@defthm\newtheorem
  \def\switchtotheoremrm{\let\@defthm\newtheoremrm}
  \def\defthm#1#2#3{\@defthm{#1}{#2}} 
  \let\your@section\section
  \def\section{\gdef\currthmtype{section}\your@section}
  \def\currthmtype{}
    \def\autoref#1{\ref*{label@name@#1}~\ref{#1}}
    \def\autoref#1{\ref{label@name@#1}~\ref{#1}}
    \let\old@label\label%
    \def\label#1{%
      {\let\your@currentlabel\@currentlabel%
        \edef\@currentlabel{\currthmtype}%
        \old@label{label@name@#1}}%
      \old@label{#1}}
  \let\cref\autoref
  \def\defthm#1#2#3{%
    \newaliascnt{#1}{thm}
    \newtheorem{#1}[#1]{#2}
    \aliascntresetthe{#1}
    \crefname{#1}{#2}{#3}
    \ea\def\ea\renumberthms\ea##\ea1\ea{\renumberthms{##1}\numberwithin{#1}{##1}}
  }
    \def\defthm#1#2#3{
      \newtheorem{#1}{#2}[section]%
      \expandafter\def\csname #1autorefname\endcsname{#2}%
      \expandafter\let\csname c@#1\endcsname\c@thm}
    \def\defthm#1#2#3{\newtheorem{#1}[thm]{#2}} 
\let\SK@label\label\fi
    \let\old@label\label
    \let\your@thm\@thm
    \def\@thm#1#2#3{\gdef\currthmtype{#3}\your@thm{#1}{#2}{#3}}
    \def\currthmtype{}
    \def\label#1{{\let\your@currentlabel\@currentlabel\def\@currentlabel%
        {\currthmtype~\your@currentlabel}%
        \SK@label{#1@}}\old@label{#1}}
    \def\autoref#1{\ref{#1@}}
  \let\cref\autoref
\newtheorem{thm}{Theorem}[section]
  \crefname{thm}{Theorem}{Theorems}
\else\theoremstyle{definition}\fi
\else\theoremstyle{remark}\fi
  \crefname{part}{Part}{Parts}
  \crefname{figure}{Figure}{Figures}
  \let\qed\endproof
  \let\your@endproof\endproof
  \def\my@endproof{\your@endproof}
  \def\endproof{\my@endproof\gdef\my@endproof{\your@endproof}}
  \def\qedhere{\tag*{\endproofbox}\gdef\my@endproof{\relax}}
  \def\pr@@f[#1]{\subsubsection*{\sc #1.}}
\def\thmqedhere{\expandafter\csname\csname @currenvir\endcsname @qed\endcsname}
  \let\c@equation\c@subsection
  \let\c@equation\c@thm
\numberwithin{equation}{section}
\mdef\ep{\varepsilon}
\mdef\ph{\varphi}
\let\Om\Omega
\def\aandish{strong\xspace}
\def\Aandish{Strong\xspace}
\def\aprime{$\aor$-prime\xspace}
\def\mprime{$\mor$-prime\xspace}
\def\aint{$\aor$-integral\xspace}
\def\mint{$\mor$-integral\xspace}
\def\amax{$\aor$-maximal\xspace}
\def\mmax{$\mor$-maximal\xspace}
\def\afield{$\aor$-field\xspace}
\def\mfield{$\mor$-field\xspace}
\def\atotal{$\aor$-total\xspace}
\def\mtotal{$\mor$-total\xspace}
\def\atotal{total\xspace}
\def\mtotal{linear\xspace}
\def\mtotality{linearity\xspace}
\def\alocated{$\aor$-located\xspace}
\def\mlocated{$\mor$-located\xspace}
\newcommand{\opl}{{\smash{\mathring{L}}}}
\newcommand{\cll}{{\smash{\overline{L}}}}
\newcommand{\opu}{{\smash{\mathring{U}}}}
\newcommand{\clu}{{\smash{\overline{U}}}}
\newcommand{\lint}{\sqSubset}
\renewcommand{\iint}{\ll}
\newcommand{\niint}{\not\ll}
\newcommand{\apart}{\bowtie}
\newcommand{\intr}{\mathsf{int}}
\newcommand{\cl}{\mathsf{cl}}
\let\close\approx
\newcommand{\umplus}{\mathbin{\raisebox{1pt}{$\scriptstyle\boxbot$}}}
\newcommand{\lmplus}{\mathbin{\raisebox{1pt}{$\scriptstyle\boxtop$}}}
\newcommand{\msup}{{\textstyle\sup^{\smallmor}}}
\def\gg{^{\mathrm{N}}}
\def\gr{^{\mathrm{G}}}
\def\ch{^{\{\}}}
\def\ev{\mathsf{ev}}
\title{Affine logic for constructive mathematics}
\author{Michael Shulman}
\address{Department of Mathematics, University of San Diego, San Diego, CA, 92110, USA}
\email{shulman@sandiego.edu}
\subjclass[2010]{03F52, 03F65}
\thanks{This material is based on research
    sponsored by The United States Air Force Research Laboratory under
    agreement numbers FA9550-15-1-0053, FA9550-16-1-0292, and FA9550-21-1-0009.  The
    U.S. Government is authorized to reproduce and distribute reprints
    for Governmental purposes notwithstanding any copyright notation
    thereon.  The views and conclusions contained herein are those of
    the author and should not be interpreted as necessarily
    representing the official policies or endorsements, either expressed
    or implied, of the United States Air Force Research Laboratory, the
    U.S. Government, or Carnegie Mellon University.}
\date{\today}
\let\myfrac\frac
\def\frac#1#2{{\textstyle\myfrac{#1}{#2}}}
\def\bSet{\mathbf{Set}}
\def\ASet{\mathfrak{A}\mathbf{Set}}
\def\cCat{\mathcal{C}\mathit{at}}
\def\cInt{\mathcal{I}\mathit{nt}}
\def\cAff{\mathcal{A}\mathit{ff}}
\def\sfeq{\mathsf{eq}}
\let\mand\boxtimes
\let\aand\sqcap
\let\aor\sqcup
\def\mor{\mathbin{\raisebox{-1pt}{\rotatebox{45}{$\scriptstyle\boxtimes$}}}}
\def\smallmor{\mathbin{\raisebox{-1pt}{\rotatebox{45}{$\scriptscriptstyle\boxtimes$}}}}
\def\fa{{\textstyle\bigsqcap}}
\def\ex{{\textstyle\bigsqcup}}
\let\imp\multimap
\let\liff\multimapboth
\def\one{\mathbf{1}}
\def\zero{\mathbf{0}}
\def\nt#1{\smash{{#1}^\perp}}
\def\ntp#1{\smash{{(#1)}^\perp}}
\def\ntP#1{{\Big(#1\Big)}^\perp}
\def\ntpp#1{{\left(#1\right)}^\perp}
\def\ntnt#1{\smash{{#1}^{\perp\perp}}}
\let\ltens\boxtimes
\def\cm#1{{#1}^\perp}
\def\cmcm#1{{#1}^{\perp\perp}}
\def\cmp#1{{(#1)}^\perp}
\def\qeq{\mathrel{\smash{\overset{\raisebox{-1pt}{\tiny ?}}{=}}}}
\def\defeq{\mathrel{\smash{\overset{\scriptscriptstyle\mathsf{def}}{=}}}}
\let\logeq\equiv
\def\nlogeq{\not\logeq}
\def\qlogeq{\mathrel{\smash{\overset{\raisebox{-1pt}{\tiny ?}}{\logeq}}}}
\def\pf#1{{#1}^+}
\def\pfp#1{{(#1)}^+}
\def\rf#1{{#1}^-}
\def\rfp#1{{(#1)}^-}
\def\chu{\mathrm{Chu}}
\def\chuh{\bH_{\pm}}
\def\chuzeroone{\{\zero,\one\}_{\pm}}
\let\P\sP
\let\luk\L
\let\L\fA
\let\I\fI
\let\types\vdash
\def\ochat{\hat{\oc}\hspace{.5pt}}
\def\wnchk{\check{\wn}\hspace{.5pt}}
\def\nle{\not\le}
\def\ieq{=}
\def\ineq{\neq}
\def\leq{\circeq}
\def\lneq{\not\circeq}
\def\lin{\mathrel{\mathrlap{\sqsubset}{\mathord{-}}}}
\def\lnin{\not\mathrel{\mathrlap{\sqsubset}{\mathord{-}}}}
\let\lleq\sqsubseteq
\let\lle\sqsubseteq
\let\llt\sqsubset
\let\lsub\sqsubseteq
\def\lnsub{\mathrel{\not\sqsubseteq}}
\let\lnle\lnleq
\let\iff\leftrightarrow
\let\lcap\aand
\let\lcup\aor
\def\lmcap{\mathbin{\hat{\mand}}}
\def\lmcup{\mathbin{\check{\mor}}}
\let\lbigcap\fa
\let\lbigcup\ex
\def\lempty{\mathord{\not\hspace{-2pt}{\scriptstyle\Box}}}
\def\lmeq{\overset{\scriptscriptstyle\mand}{=}}
\def\imneq{\not\lmeq}
\def\inv{\mathsf{inv}}
\def\ocin{:}
  \xdef\lset{\protect\expandafter\noexpand\csname lset \endcsname}
\gdef\csname lset \endcsname#1{\mathinner
        {\lbrace\,{\mathcode`\|32768\let|\midvert #1}\,\rbrace}}
  \xdef\lsetof{\protect\expandafter\noexpand\csname lsetof \endcsname}
\gdef\csname lsetof \endcsname#1{\left\llbracket%
     \ifx\SavedDoubleVert\relax \let\SavedDoubleVert\|\fi
     \:{\let\|\SetDoubleVert
     \mathcode`\|32768\let|\SetVert
     #1}\:\right\rrbracket}
  \xdef\ocset{\protect\expandafter\noexpand\csname ocset \endcsname}
\gdef\csname ocset \endcsname#1{\mathinner
        {\lbrace\,{\mathcode`\|32768\let|\midvert #1}\,\rbrace}}
  \xdef\ocsetof{\protect\expandafter\noexpand\csname ocsetof \endcsname}
\gdef\csname ocsetof \endcsname#1{\oc\!\left\{%
     \ifx\SavedDoubleVert\relax \let\SavedDoubleVert\|\fi
     \:{\let\|\SetDoubleVert
     \mathcode`\|32768\let|\SetVert
     #1}\:\right\}}
\begin{document}
\maketitle

\begin{abstract}
  We show that numerous distinctive concepts of constructive mathematics arise automatically from an ``antithesis'' translation of affine logic into intuitionistic logic via a Chu/Dialectica construction.
  This includes apartness relations, complemented subsets, anti-subgroups and anti-ideals,
  strict and non-strict order pairs, cut-valued metrics, and apartness spaces.
  We also explain the constructive bifurcation of some classical concepts using the choice between multiplicative and additive affine connectives.
  Affine logic and the antithesis construction thus systematically ``constructivize'' classical definitions, handling the resulting bookkeeping automatically.
\end{abstract}


\section{Introduction}
\label{sec:introduction}

One of the explicit motivations of Girard's linear logic~\cite{girard:ll} was to recover an involutory ``classical'' negation while retaining ``constructive content'':
\begin{quote}
  \small \dots the linear negation \dots is a constructive and involutive negation; by the way, linear logic works in a classical framework, while being more constructive than intuitionistic logic.  \cite[p3]{girard:ll}
\end{quote}
One might therefore expect that over the past three decades some practicing constructive mathematicians would have adopted linear logic instead of intuitionistic logic;%
\footnote{I will use ``intuitionistic'' to refer to the formal logic codified by Heyting, and ``constructive'' for the programme of doing mathematics with ``constructive content''.
This is unfaithful to the original philosophical meaning of ``intuitionistic'', but for better or for worse the phrase ``intuitionistic logic'' has come to refer to Heyting's logic, and I have been unable to think of a satisfactory alternative.
I will use ``classical'' for classical mathematics and classical nonlinear logic, and ``linear'' (resp.\ ``affine'') for the ``classical'' form of linear (resp.\ affine) logic that has an involutive negation.}
but this does not seem to be the case.
One might conjecture many reasons for this.
However, I will instead argue that linear logic has nevertheless been present \emph{implicitly} in  constructive mathematics, going back all the way to Brouwer.

Specifically, I will show that there are aspects of constructive mathematical practice that are better explained by linear logic than by intuitionistic logic.
The non-involutory intuitionistic negation often leads constructive mathematicians to study both a classical concept and its formal De Morgan dual, such as equality and apartness, subgroups and antisubgroups, topological spaces and apartness spaces, and so on.
We will show that such ``dual pairs of propositions'' can be regarded as single propositions in a model of \emph{linear} logic, or more specifically \emph{affine} logic, which we call the \emph{antithesis model}.
Notions such as apartness relations then arise by writing a classical definition in affine logic and interpreting it in this model.

The antithesis model is a special case of a Chu or Dialectica construction~\cite{chu:construction,paiva:dialectica-chu,shulman:dialectica} (the two constructions coincide in this special case), applied to the algebra of intuitionistic propositions.
It is well-known that Chu and Dialectica constructions yield models of linear and affine logic (see for instance~\cite{barr:staraut-ll,depaiva:dialectica,oliva:dialectica-ll}); the novelty here is in how the logic of this particular special case relates to constructive mathematics.

Since it constructs a model of affine logic from any model of intuitionistic logic in a functorial way, the antithesis model can also be used as a purely syntactic translation, transforming any definition, theorem, or proof in affine logic into one in intuitionistic logic; we call this the \emph{antithesis translation}.
This is analogous to other translations such as the G\"odel--Gentzen double-negation translation, which constructs a model of classical logic from a model of intuitionistic logic, and therefore transforms classical theorems into intuitionistic ones; or the Girard translation, which constructs a model of intuitionistic logic from a model of linear logic, and therefore transforms intuitionistic theorems into linear ones.

Importantly, however, unlike the G\"odel--Gentzen and Girard translations, the antithesis translation is not conservative.
Indeed, to a linear logician, the antithesis model looks quite degenerate, particularly in the behavior of its exponentials.
Thus, we should not view the antithesis translation as an ``explanation'' or ``embedding'' of affine logic into intuitionistic logic.
Rather, we view it as giving a way to treat affine logic as a ``high-level'' or ``domain-specific'' language that can be ``compiled'' into intuitionistic logic.
(Because the antithesis translation is a one-sided inverse of the Girard translation, we can in fact view affine logic as a strict \emph{extension} of intuitionistic logic: the high-level language includes an ``escape to assembler''.)
In other words, the antithesis translation is a tool primarily for the intuitionistic mathematician, not the linear one.

This tool has several possible uses.
Firstly, it formalizes a technique for ``constructivizing'' classical definitions: write them in affine logic and apply the antithesis translation.
This method often yields a better result than the usual one of simply regarding the classical connectives as having their intuitionistic meanings; the latter frequently requires manual ``tweaking'' to become intuitionistically sensible.

We also obtain a uniform explanation for some instances of the fact that ``constructivizing'' is multi-valued.
Namely, in affine logic the connectives ``and'' and ``or'' bifurcate into ``additive'' and ``multiplicative'' versions.
Thus, many classical definitions can be written in affine logic in more than one way, by making different choices about whether to interpret the classical connectives as additive or multiplicative affine ones.
Under the antithesis translation, this then leads to different intuitionistic versions of a classical definition, many of which occur naturally in examples and have been already written down ``manually'' by constructive mathematicians.
(There are, of course, also other reasons for the constructive multifurcation of concepts.)

Roughly speaking, the additive disjunction ``$P$ or $Q$'' corresponds, under the antithesis translation, to the intuitionistic disjunction; while the multiplicative disjunction ``$P$ \emph{par} $Q$'' corresponds to the intuitionistic pattern ``if not $P$, then $Q$; and if not $Q$, then $P$'' that is often used constructively when the intuitionistic disjunction is too strong.
For instance, in intuitionistic logic the rational numbers are a field in the strong ``geometric'' sense that every element is either zero or invertible.
The real numbers are not a field in this sense, but they are a field in the weaker ``Heyting'' sense that every nonzero\footnote{Here ``nonzero'' means ``apart from zero''.} element is invertible and every noninvertible element is zero.
This condition is the image under the antithesis translation of the affine statement that every element is either zero \emph{par} invertible.
Similarly, for real numbers $x\le y$ is not equivalent to ``$x=y$ or $x<y$'', but it \emph{is} equivalent to the antithesis translation of ``$x=y$ \emph{par} $x<y$''.
In \crefrange{sec:reals}{sec:topology} we will see that a systematic use of \emph{par} can solve a few tricky problems in intuitionistic constructive mathematics, such as defining a notion of ``metric space'' that includes the Hausdorff metric, or a union axiom for a ``closure space'' that is not unreasonably strong.

Secondly, we can also apply the antithesis translation to proofs.
Many classical proofs are also affinely valid with little change; surprisingly often, the lack of contraction is not a problem when definitions are formulated appropriately.
Hence, the antithesis translation turns such proofs into intuitionistic proofs of theorems involving apartness relations, antisubgroups, and so on: the process of ``turning everything around'' to deal with such concepts can be automated.
This tends to work for classical proofs that may use \emph{proof by contradiction} (or equivalently the \emph{law of double negation}) as long as they avoid the \emph{law of excluded middle}.
In intuitionistic logic, the laws of double negation and excluded middle are equivalent, but linear and affine logic disentangle (some versions of) them.

Having an automatic way to produce intuitionistic definitions and proofs is more than just a convenience: it can prevent or correct mistakes.
Working explicitly with apartness relations and their ilk is tedious and error-prone: it's easy to omit one of the contrapositive conditions, or forget to check that a function is strongly continuous or that a subset is strongly extensional.
Moreover, it's not always obvious exactly what the axioms on an apartness structure should be; but the antithesis translation always seems to give the right answer (or at least \emph{a} right answer).

A third possible use of the antithesis translation is more speculative.
Rather than viewing affine logic and the antithesis translation as tools for doing intuitionistic constructive mathematics, one might imagine instead a constructive mathematics (in the informal sense of ``mathematics with constructive content'') that uses \emph{only} affine logic.
The antithesis interpretation would then be a \emph{guide} to the correct way to formulate concepts in affine constructive mathematics.
It is not yet clear how feasible this idea is,\footnote{For instance, can one really formulate mathematics sufficiently carefully that it can \emph{all} be done in affine logic, without contraction?
  Is there a form of affine \emph{dependent} type theory that would be appropriate for such a mathematics?}
but we will make some remarks about it in \cref{sec:lcm}.


In this paper, we will mainly focus on the first use: translating definitions.
We include a few proofs, but for the most part we leave the development of ``affine constructive mathematics'' for future work.

\subsection*{Outline}
\label{sec:outline}

In \cref{sec:meaning} we describe our viewpoint on affine logic informally, analogously to the BHK interpretation of intuitionistic logic; no prior familiarity with affine or linear logic is required.
Then in \cref{sec:chu,sec:types} we formalize this interpretation both semantically, as a Chu or Dialectica construction, and syntactically, as a translation between propositional, first-order, and higher-order logics.
Many introductions to linear logic present it as a logic of ``resources'' or ``games''; we view it as a logic of \emph{mathematics}, like intuitionistic logic and classical logic, which is designed to be ``constructive'' in a different way than intuitionistic logic.

The rest of the paper consists of ``case studies'', showing that rewriting classical definitions directly in affine logic and passing across this antithesis translation yields well-known notions in intuitionistic constructive mathematics.
In \cref{sec:sets,sec:linear-sets} we treat sets and functions, then algebra in \cref{sec:algebra}, order relations in \cref{sec:posets}, real numbers in \cref{sec:reals}, and topology in \cref{sec:topology}.
Finally, in \cref{sec:lcm} we speculate a bit about how one might motivate and explain an ``affine constructive mathematics'' on purely philosophical grounds. 

\subsection*{Acknowledgments}
\label{sec:acknowledgments}

My understanding of constructive mathematics and linear logic has been greatly influenced by Toby Bartels, Mart\'{i}n Escard\'{o}, Andrej Bauer, Dan Licata, Valeria de Paiva, Todd Trimble, and Peter LeFanu Lumsdaine.
I am also grateful to the referees for stimulating comments and suggestions, and for suggesting the word ``antithesis''.

\section{A meaning explanation}
\label{sec:meaning}

Intuitionistic logic is often explained informally (e.g.\ in~\cite{tvd:constructivism-i}) by the so-called Brouwer--Heyting--Kolmogorov (BHK) interpretation, which explains the meaning of the logical connectives and quantifiers ``pragmatically'' in terms of what counts as a proof of them.
For instance, a few of the rules are:
\begin{itemize}
\item A proof of $P\to Q$ is a method converting any proof of $P$ into a proof of $Q$.
\item A proof of $P\lor Q$ is either a proof of $P$ or a proof of $Q$.
\item A proof of $\neg P$ is a method converting any proof of $P$ into a proof of an absurdity.
\end{itemize}
This leads to the rules of intuitionistic logic; for instance, we cannot prove $P\lor \neg P$ in general since we cannot decide whether to give a proof of $P$ or a proof of $\neg P$.

Practicing constructive mathematicians, however, have found that it is often not sufficient to know what counts as a proof of a statement: it is often just as important, if not more so, to know what counts as a \emph{refutation} of a statement.
For instance, while it is of course essential to know that two real numbers are equal if they agree to any desired degree of approximation, it is also essential to know that they are \emph{unequal} if there is some finite degree of approximation at which they disagree.
If real numbers are defined using Cauchy sequences $x,y:\dN\to\dQ$ with specified rate of convergence $|x_n-x_m|< \frac1n+\frac1m$, then we want to separately define
\begin{align*}
  (x=y) &\defeq \forall n. |x_n-y_n| \le \textstyle\frac2n\\
  (x\neq y) &\defeq \exists n. |x_n-y_n| >\textstyle \frac2n.
\end{align*}

In classical logic, a refutation of $P$ means a proof of $\neg P$, and these two definitions are each other's negations.
But intuitionistic negation is not involutive, and this ``$x\neq y$'' is not the logical negation of $x=y$.
Thus, when constructive mathematics is done in intuitionistic logic --- as it usually is --- we must define inequality as a new \emph{apartness relation} with which the set of reals is equipped.
In Bishop's words:
\begin{quote}
  \small It is natural to want to replace this negativistic definition [the logical negation of equality] by something more affirmative\dots Brouwer himself does just this for the real number system, introducing an affirmative and stronger relation of inequality\dots Experience shows that it is not necessary to define inequality in terms of negation.  For those cases in which an inequality relation is needed, it is better to introduce it affirmatively\dots
  \cite[p10]{bb:constr-analysis}
\end{quote}

Similar things happen all throughout constructive mathematics.
In addition to knowing when an element is in a subgroup, we need to know when an element is not in a subgroup; thus we introduce \emph{antisubgroups} (and similarly anti-ideals, etc.).
In addition to knowing when a point is in the interior of a set, we need to know when it is in the exterior of a set; thus we introduce \emph{apartness spaces}~\cite{bv:apartness}.

To repeat, the problem is that the BHK interpretation and resulting intuitionistic logic privilege \emph{proofs} over \emph{refutations}.
In the words of Patterson~\cite{patterson:thesis}:
\begin{quotation}
  \small Once we take on the Brouwerian view that proofs should be constructions, both negation and ``falsity'' disappear because absurdity is not the same thing as demonstratively false.  This is because a construction leading to a contradiction does not mean that we can provide a counterexample.\dots

  In intuitionistic logic we have taken ``true'' to be primitive as well as ``absurdity''.\dots Thus, a ``proof leading to absurdity'' is a derived notion of falsity and the only one afforded to us in intuitionistic logic.

  Negative information can be just as constructive as positive information.\dots the correct way to use negative information in a constructive setting would be to do the ``opposite'' or ``backward'' construction in some way.~\cite[p8--9]{patterson:thesis}
\end{quotation}

This suggests a BHK-like explanation of logical connectives in terms of \emph{both} what counts as a proof and what counts as a refutation.
We now explore what such an explanation might look like.
The only requirement we impose is that no formula should be both provable and refutable
(but see \cref{rmk:00,rmk:constr-duality}).

We start with the following explanations of conjunction and disjunction, which we denote $\aand$ and $\aor$ rather than $\land$ and $\lor$ as a warning that they will not behave quite like the usual intuitionistic or classical connectives.
\begin{itemize}
\item A proof of $P\aand Q$ is a proof of $P$ together with a proof of $Q$.
\item A refutation of $P\aand Q$ is either a refutation of $P$ or a refutation of $Q$.
\item A proof of $P\aor Q$ is either a proof of $P$ or a proof of $Q$.
\item A refutation of $P\aor Q$ is a refutation of $P$ together with a refutation of $Q$.
\end{itemize}
These ``proof'' clauses are the usual BHK ones, while the ``refutation'' clauses are natural-seeming De Morgan duals.
The most natural clauses for negation are:
\begin{itemize}
\item A proof of $\nt P$ is a refutation of $P$
\item A refutation of $\nt P$ is a proof of $P$.
\end{itemize}
This negation is \emph{involutive}, $\ntnt P \logeq P$, with strict De Morgan duality: $\ntp{P\aand Q} \logeq \nt P \aor \nt Q$ and $\ntp{P\aor Q} \logeq \nt P \aand \nt Q$.
($\logeq$ denotes inter-derivability.)

A little thought suggests that one natural explanation of implication (which we indulge in some foreshadowing by writing as $\imp$) is:
\begin{itemize}
\item A proof of $P\imp Q$ is a method converting any proof of $P$ into a proof of $Q$, \emph{together with} a method converting any refutation of $Q$ into a refutation of $P$.
\item A refutation of $P\imp Q$ is a proof of $P$ together with a refutation of $Q$.
\end{itemize}

\begin{rmk}
  This does not mean that when we prove an implication we must also prove its contrapositive explicitly.
  The ``proofs'' in these explanations, like those in the BHK interpretation, are not the ``proofs'' that a mathematician writes in a paper (or even formalizes on a computer).
  Rather they are ``verifications'', ``fully normalized proofs'', or ``data that must be extractable from a proof''.
  Not every intuitionistic proof (in the ordinary sense) of $P\lor Q$ begins by deciding whether to prove $P$ or $Q$, but intuitionistic logic satisfies the ``disjunction property'' that from any proof of $P\lor Q$ in the empty context we can \emph{extract} either a proof of $P$ or a proof of $Q$.
  Similarly, any proof of $P\imp Q$ in the empty context must contain enough \emph{information} to transform refutations of $Q$ into refutations of $P$ as well as proofs of $P$ into proofs of $Q$.
\end{rmk}

Building contraposition into the definition of implication makes it unsurprising that we get $(P \imp Q) \logeq (\nt Q \imp \nt P)$.
So it might seem that we are going to fall into \emph{classical} logic, but this is not the case.
For instance, classically we have $\neg (P \to Q) \logeq P \land \neg Q$; but despite the apparent presence of this law in the ``refutation'' clause for $\imp$, we do \emph{not} have $\ntp{P\imp Q} \qlogeq P \aand \nt Q$.
Instead we have $\ntp{P\imp Q} \logeq P \mand \nt Q$, where $\mand$ is a different kind of conjunction:
\begin{itemize}
\item A proof of $P\mand Q$ is a proof of $P$ together with a proof of $Q$.
\item A refutation of $P\mand Q$ is a method converting any proof of $P$ into a refutation of $Q$, together with a method converting any proof of $Q$ into a refutation of $P$.
\end{itemize}
Note that $P\aand Q$ and $P\mand Q$ have the same \emph{proofs}, but different \emph{refutations}.
Both refutation clauses are based on the idea that $P$ and $Q$ cannot both be true, but to refute $P\aand Q$ we must specify \emph{which} of them fails to be true, whereas to refute $P\mand Q$ we simply have to show that if one of them is true then the other cannot be.

This suggests that $P\mand Q$ is stronger than $P\aand Q$, and in fact we can justify $(P\mand Q) \imp (P\aand Q)$ on the basis of our informal explanations.
Since $P\mand Q$ and $P\aand Q$ have the same proofs, it suffices to transform any refutation of $P\aand Q$ into a refutation of $P\mand Q$.
The former is either a refutation of $P$ or of $Q$; without loss of generality assume the latter.
Then we can certainly produce a refutation of $Q$ that doesn't even need to use a proof of $P$.
On the other hand, given a refutation of $Q$ it is impossible that we could also have a proof of $Q$; so by \emph{ex contradictione quodlibet} from any proof of $Q$ we can vacuously produce a refutation of $P$.

It follows that the De Morgan dual $P \mor Q \defeq \ntp{\nt P \mand \nt Q}$ of $\mand$ is a different kind of disjunction that is \emph{weaker} than $\aor$.
(For a discussion of notation, see \cref{rmk:lines}.)
Its explanation is:
\begin{itemize}
\item A proof of $P\mor Q$ is a method converting any refutation of $P$ into a proof of $Q$, together with a method converting any refutation of $Q$ into a proof of $P$.
\item A refutation of $P\mor Q$ is a refutation of $P$ together with a refutation of $Q$.
\end{itemize}
Thus $P\mor Q$ has the same \emph{refutations} as $P\aor Q$, but more proofs: while $P\aor Q$ supports proof by cases, $P\mor Q$ supports only the disjunctive syllogism.
As noted in \cref{sec:introduction}, $P\mor Q$ encapsulates a common constructive pattern for weakening definitions when the intuitionistic ``or'' is too strong: rather than asserting that one of two conditions holds, we assert that if either one of two conditions fails then the other must hold.
We have $(P\imp Q) \logeq (\nt P \mor Q)$, a version of the classical law $(P \to Q) \logeq (\neg P \lor Q)$.

A reader familiar with linear logic may recognize $\mand$ and $\mor$ as its \emph{multiplicatives}, while $\aand$ and $\aor$ are its \emph{additives},\footnote{The origin of the terminology is apparently the fact that the distributive law in linear logic is $P \mand (Q\aor R) \logeq (P\mand Q) \aor (P\mand R)$, i.e.\ ``multiplication distributes over addition''.} with $\imp$ as its \emph{linear implication}.
Indeed, this explanation is similar to the ``game semantics'' of linear logic, in which a ``proposition'' is regarded as a game or interaction between a ``prover'' and a ``refuter''.

Actually, our explanation justifies not fully general linear logic but \emph{affine logic}, because in the nullary case (``true'' and ``false'') the distinctions collapse:
\begin{itemize}
\item There is exactly one proof of $\top$.
\item There is no refutation of $\top$.
\item There is no proof of $\bot$.
\item There is exactly one refutation of $\bot$.
\end{itemize}
These are units for both additive and multiplicative connectives: $P \aand \top \logeq P \mand \top \logeq P$ and $P \aor \bot \logeq P \mor \bot \logeq P$.
The most nontrivial part of this is the refutations of $P\mand \top$ (or dually the proofs of $\nt P\mor \bot$), which by definition consist of a method transforming any proof of $\top$ into a refutation of $P$, together with a method transforming any proof of $P$ into a refutation of $\top$.
The former is essentially just a refutation of $P$; but given this, there can be no proof of $P$, so the latter method is vacuous.

The quantifiers are essentially additive; we write them as $\ex/\fa$ instead of $\exists/\forall$.
\begin{itemize}
\item A proof of $\ex x. P(x)$ is a value $a$ together with a proof of $P(a)$.
\item A refutation of $\ex x. P(x)$ consists of a refutation of $P(a)$ for an arbitrary $a$.
\item A proof of $\fa x. P(x)$ consists of a proof of $P(a)$ for an arbitrary $a$.
\item A refutation of $\fa x. P(x)$ is a value $a$ together with a refutation of $P(a)$.
\end{itemize}
The most novel of these clauses is the one for refutations of $\fa x. P(x)$: just as a constructive \emph{proof} of an existence statement should supply a witness, we stipulate that a constructive \emph{disproof} of a universal statement should supply a counterexample.
This yields De Morgan dualities
\begin{mathpar}
  \ntP{\ex x.P(x)} \logeq \fa x. \nt{P(x)}\and
  \ntP{\fa x.P(x)} \logeq \ex x. \nt{P(x)}
\end{mathpar}
and also ``Frobenius'' laws involving the \emph{multiplicative} connectives:
\begin{mathpar}
  P \mand \ex x.Q(x) \logeq \ex x. (P \mand Q(x)) \and
  P \mor \fa x.Q(x) \logeq \fa x. (P \mor Q(x)).
\end{mathpar}
But $P \aand \ex x.Q(x) \qlogeq \ex x. (P \aand Q(x))$ fails: a refutation of $\ex x. (P \aand Q(x))$ consists of, for every $a$, either a refutation of $P$ or a refutation of $Q(a)$; while a refutation of $P \aand \ex x.Q(x)$ must decide at the outset whether to refute $P$ or to refute all $Q(a)$'s.

This explanation of the connectives and quantifiers solves the problem mentioned above with equality and inequality of real numbers.
If we define
\[ (x=y) \defeq \fa n. |x_n-y_n| \le \textstyle\frac2n \]
then we find that (assuming that $\ntp{p\le q} \logeq (p > q)$ for $p,q\in\dQ$)
\[ \ntp{x=y} \logeq \ex n. |x_n-y_n| > \textstyle\frac2n. \]
Thus, the correct notions of equality and inequality for real numbers are each other's negations, relieving us of the need for a separate ``apartness relation''.

The names \emph{linear} and \emph{affine} logic refer to the fact that $\mand$ and $\mor$ are not idempotent: $P\mand P \nlogeq P$ and $P\mor P \nlogeq P$.
(More precisely, what fails are $P \imp (P\mand P)$ and $(P\mor P) \imp P$.)
A proof of $P\mor P$ consists of a method (well, technically two methods) for converting any refutation of $P$ into a proof of $P$.
Since $P$ cannot be both provable and refutable, this is equivalently a method showing that $P$ cannot be refuted --- which is, of course, different from saying that it can be proven.
Note that linear logic always satisfies the ``multiplicative law of excluded middle'' $P\mor \nt P$ and the ``multiplicative law of non-contradiction'' $\ntp{P\mand \nt P}$ (in fact, they are essentially the same statement).
We call a proposition \textbf{decidable} if it satisfies the \emph{additive} law of excluded middle $P\aor \nt P$, or equivalently the additive law of non-contradiction $\ntp{P\aand \nt P}$.
According to the above informal explanations, decidability means that we have either a proof of $P$ or a refutation of $P$.

It may help to understand $\mand$ if we write out the meaning of $(P\mand Q)\imp R$, which the reader can verify is equivalent to $P\imp (Q\imp R)$:
\begin{itemize}
\item A proof of $(P\mand Q)\imp R$ consists of methods for:
  \begin{itemize}
  \item converting any proofs of $P$ and $Q$ into a proof of $R$,
  \item converting any proof of $P$ and refutation of $R$ into a refutation of $Q$, and
  \item converting any proof of $P$ and refutation of $Q$ into a refutation of $R$.
  \end{itemize}
\item A refutation of $(P\mand Q)\imp R$ consists of a proof of $P$, a proof of $Q$, and a refutation of $R$.
\end{itemize}
More generally, a proof of $(P_1\mand P_2\mand \cdots\mand P_n)\imp R$ consists of ``all possible direct or by-contrapositive proofs'' that contradict one of the hypotheses.
By contrast:
\begin{itemize}
\item A proof of $(P \aand Q) \imp R$ consists of:
  \begin{itemize}
  \item A method converting any proofs of $P$ and $Q$ into a proof of $R$, and
  \item A method converting any refutation of $R$ into either a refutation of $P$ or a refutation of $Q$.
  \end{itemize}
\item A refutation of $(P \aand Q) \imp R$ consists of a proof of $P$, a proof of $Q$, and a refutation of $R$.
\end{itemize}
That is, when proving the (stronger) statement $(P \aand Q) \imp R$, the by-contrapositive direction must use $R$ to determine which of $P$ or $Q$ fails, whereas when proving $(P \mand Q) \imp R$ we are allowed to assume one of $P$ and $Q$ and contradict the other.

We define $(P\liff Q) \defeq (P\imp Q) \aand (Q\imp P)$, with the following meaning:
\begin{itemize}
\item A proof of $P\liff Q$ consists of methods for converting:
  \begin{itemize}
  \item any proof of $P$ into a proof of $Q$, and vice versa, plus
  \item any refutation of $P$ into a refutation of $Q$, and vice versa.
  \end{itemize}
\item A refutation of $P\liff Q$ consists of either:
  \begin{itemize}
  \item a proof of $P$ and a refutation of $Q$, or
  \item a refutation of $P$ and a proof of $Q$.
  \end{itemize}
\end{itemize}
Often in linear logic one defines $P\liff Q$ to be $(P\imp Q) \mand (Q\imp P)$ instead.
However, for us $(P\imp Q) \aand (Q\imp P)$ is preferable, due in part to its more informative disjunctive notion of refutation; see also \cref{eg:lin-set-omega,eg:linear-prel,sec:posets}.

Finally, following Girard~\cite{girard:ll} we introduce two unary connectives $\oc$ and $\wn$ called \emph{exponential modalities}, with the following meanings:
\begin{itemize}
\item A proof of $\oc P$ is a proof of $P$.
\item A refutation of $\oc P$ is a method converting any proof of $P$ into an absurdity.
\item A refutation of $\wn P$ is a refutation of $P$.
\item A proof of $\wn P$ is a method converting any refutation of $P$ into an absurdity.
\end{itemize}
These exponentials deal with the potential objection that not \emph{every} constructive proposition has a ``strong dual''.
For instance, not every set has an apartness relation.
But there is always the Heyting negation $\neg P \defeq (P\to\zero)$, and the propositions $\oc P$ are those whose refutations are the ``tautological'' ones of this form.
We will call a proposition in affine logic $P$ \textbf{affirmative} if $P\logeq \oc P$. 
For instance, a set in the antithesis model whose affine equality is affirmative will correspond to a set in intuitionistic logic equipped with the \emph{denial inequality}, $(x\ineq y) \defeq \neg (x\ieq y)$.

It is also common to encounter propositions that are the Heyting negation of their strong dual.
For instance, while real numbers do not satisfy $(x\ineq y) \qlogeq \neg (x\ieq y)$, they do satisfy $(x\ieq y) \logeq \neg (x\ineq y)$ (the inequality is \emph{tight}).
In the antithesis model these are the propositions with $P \logeq \wn P$, 
which we call \textbf{refutative}.

We can also understand $\oc$ by considering $\oc P \imp Q$.
Since $Q$ cannot be both provable and refutable, if we can transform proofs of $P$ into proofs of $Q$, then any refutation of $Q$ already entails the impossibility of a proof of $P$.
Thus, in proving $\oc P \imp Q$ the contrapositive direction is subsumed by the forwards direction, giving:
\begin{itemize}
\item A proof of $\oc P \imp Q$ is a method converting any proof of $P$ into a proof of $Q$.
\item A refutation of $\oc P \imp Q$ is a proof of $P$ together with a refutation of $Q$.
\end{itemize}
Unlike $\ntp{P\imp Q} \logeq (\nt Q \imp \nt P)$, we only have $\ntp{\oc P \imp Q} \logeq (\nt Q \imp \wn(\nt P))$.
Thus $\oc P$ is ``usable multiple times'' (since $\oc P \logeq \oc P \mand \oc P$) but ``not contraposable''.


Note the proofs of $\oc P \imp Q$ are just the ordinary BHK interpretation of $P\to Q$.
This foreshadows the Girard translation of intuitionistic logic into linear logic.

\begin{notn}\label{rmk:lines}
  Since we will be passing back and forth between intuitionistic and affine logic frequently, to minimize confusion I have tried not to duplicate any notations between the two contexts.
  As a mnemonic, our notations for affine connectives generally involve \emph{perpendicular} lines; thus we have $\mand,\aand,\aor,\mor,\fa,\ex,\top,\bot$\footnote{There is no uniformity in notation for linear logic.
    The most common notation for $\mand$ is $\otimes$, but $\owedge,\oslash,\circ,\&$ are also used; whereas $\mor$ has been denoted by $\parr,\oplus,\parallel,\bullet,\odot,\ovee,\Box,\sharp,*$.
    Notations for $\aand/\aor$ include $\&/\oplus$, $\land/\lor$, and $\times/+$.
    Our $\mand/\mor$ and $\aand/\aor$ visually represent De Morgan duality, do not clash with other standard notations that I know of, and are easily distinguishable.} in place of the intuitionistic $\land,\lor,\forall,\exists,\one,\zero$.
  We carry this principle over to non-logical symbols as well, writing $\lleq,\llt,\lin$ and so on in place of the intuitionistic $\le,<,\in$.

  The main exceptions are the affine implication $\imp$, the exponentials $\oc,\wn$, and equality.
  The symbols $\imp,\oc,\wn$ are associated strongly with linear logic, and sufficiently visually distinctive to need no mnemonic.
  And the intuitionistic $\ieq$ can't be made any more perpendicular, so we instead write $\leq$ for affine equality to evoke $\imp$.

  In the intuitionistic context, will always use ``slashed'' symbols such as $\ineq,\notin,\nle$ and so on to denote strong ``affirmative'' negations, rather than the weak logical negations such as $\neg(x\ieq y)$.
  In the affine context, the corresponding slashed symbols $\lneq,\lnin,\lnle$ will refer to the involutive affine negation: $(x\lneq y) \defeq \ntp{x\leq y}$.
\end{notn}

\section{The antithesis translation for propositional logic}
\label{sec:chu}

Like the BHK interpretation of intuitionistic logic, the explanation of the affine connectives and quantifiers in \cref{sec:meaning} is informal, and nonspecific about what constitutes a ``method''.
However, the \emph{relationship} between the two interpretations can be made precise, in the form of a ``translation'' of affine logic into intuitionistic logic.
This is analogous to the G\"odel--Gentzen double-negation translation of classical logic into intuitionistic logic and the Girard translations of intuitionistic logic into linear logic, and like them it has both a semantic and a syntactic side.

Consider the G\"odel--Gentzen translation, restricted to propositional logic for simplicity.
On the semantic side, this constructs a Boolean algebra from a Heyting algebra.
More generally, let $\bH$ be any \emph{bicartesian closed category}, meaning a cartesian closed category with finite coproducts; a Heyting algebra is the special case of a bicartesian closed poset.
We regard the objects of $\bH$ as intuitionistic propositions, and hence use logical notations for its structure: $\land$ for cartesian products, $\lor$ for coproducts, $\one$ and $\zero$ for terminal and initial object, and $\to$ for exponentials.

\begin{rmk}
  Constructive logics must always deal with the question of \emph{proof relevance}: whether we interpret a proposition to belong to a \emph{poset}, such as a Heyting algebra (the proof-irrelevant version), or a more general \emph{category} (the proof-relevant version).
  Of course, a proof-relevant interpretation retains more information, including the algorithms implicitly defined by a constructive proof.
  But the natural proof-irrelevance of some models can be important, such as when defining the Dedekind real numbers in a topos.
  Similarly, some axioms cannot be stated consistently in the naturally proof-relevant logic of dependent type theory, such as Brouwer's continuity principle~\cite{escardo-xu:brouwer-ch}, or the combination of excluded middle and univalence~\cite{hottbook}.
  (A referee has pointed out that this doesn't necessarily mandate full proof-irrelevance either; one might be able to use an intermediate modality instead.)
  Fortunately, as we will see in this section and the next, the antithesis translation is insensitive to this question: it works just as well for categories as for posets.%
  \footnote{Univalent type theory~\cite{hottbook} combines proof-relevance and proof-irrelevance in one framework, relating them by propositional truncation; thus one can choose between the two case-by-case rather than globally.
  But it is not clear to me how to incorporate this flexibility into the antithesis model.}
\end{rmk}

Returning to the G\"odel--Gentzen translation, any bicartesian closed category $\bH$ is distributive~\cite{clw:ext-dist}, so its initial object is strict, meaning any morphism with codomain $\zero$ is an isomorphism.
In particular, $\zero$ is subterminal, and thus so is the Heyting negation $\neg P \defeq (P\to \zero)$ of any object.
Thus the full subcategory
\[ \bH_{\neg\neg} \defeq \setof{ P \in \bH | P \cong \neg\neg P } \]
consists of subterminal objects, and hence is a preorder.
It is closed in $\bH$ under $\land$ and $\to$, and contains $\one$ and $\zero$; it is not closed under $\lor$ but it does have binary joins defined in \bH by $\neg\neg (P\lor Q)$.
With these operations it (or more precisely its skeleton) is a Boolean algebra.
Moreover, this construction defines a left adjoint, in a suitable sense, to the forgetful functor from Boolean algebras to bicartesian closed categories.
This is the semantic side of the (propositional) G\"odel--Gentzen translation: it makes a \emph{model} of \emph{intuitionistic} logic into a \emph{model} of \emph{classical} logic.

The syntactic side goes in reverse, translating any \emph{formula} in \emph{classical} logic into a \emph{formula} in \emph{intuitionistic} logic.
It can be obtained automatically from the semantic side, by considering the free bicartesian closed category $\dH[\Sigma]$ generated by some signature $\Sigma$, whose objects and morphisms are formulas and proofs in intuitionistic logic, and its resulting Boolean algebra $\dH[\Sigma]_{\neg\neg}$.
Then if $\dB[\Sigma]$ is the free Boolean algebra generated by the same signature, whose elements and inequalities are formulas and entailments in classical logic, its universality means there is a unique Boolean algebra homomorphisms $(-)\gg : \dB[\Sigma] \to \dH[\Sigma]_{\neg\neg}$.

This is the syntactic side of the G\"odel--Gentzen translation, which maps formulas and entailments in classical logic into formulas and proofs in intuitionistic logic.
Its usual explicit definition can be read off from the Boolean algebra structure of $\dH[\Sigma]_{\neg\neg}$ above, e.g.\ $(P\land Q)\gg = P\gg \land Q\gg$ and $(P\lor Q)\gg = \neg\neg(P\gg \lor Q\gg)$.
In particular, deriving these formulas semantically in this way means that the translation is automatically sound, i.e.\ maps proofs to proofs.

The situation with the Girard translation is similar.
We recall the relevant category-theoretic models of (propositional) linear logic.

\begin{defn}
  A \textbf{$\ast$-autonomous category}~\cite{barr:staraut} is a closed symmetric monoidal category $(\bL,\mand,\imp)$ equipped with an object $\bot$ such that for any $P$ the double-dualization map from $P$ to $(P\imp \bot)\imp \bot$ is an isomorphism.
  If \bL has finite products, a \textbf{Seely comonad}~\cite{seely:ll-staraut,mellies:catsem-ll} on it is a comonad $\oc$ such that the Kleisli category $\bL_\oc$ has finite products and the forgetful functor $\bL_\oc \to \bL$ is strong symmetric monoidal (where $\bL_\oc$ is regarded as cartesian monoidal).
\end{defn}

In a $\ast$-autonomous category we write $\nt{P} = (P \imp \bot)$ and $P \mor Q = \ntp{\nt{P} \mand \nt {Q}}$, and if it has a Seely comonad we write $\wn P = \ntp{\oc\ntp{P}}$.
Since $\ntp{-}$ is a self-duality, if a $\ast$-autonomous category has products then it also has coproducts; as in \cref{sec:meaning} we write its products as $\aand$ and its coproducts as $\aor$.

If we identify the objects of $\bL_\oc$ with objects of \bL as usual, with $\bL_\oc(P,Q) = \bL(\oc P,Q)$ and the forgetful functor $\bL_\oc \to \bL$ acting on objects by $\oc$, then the cartesian product in $\bL_\oc$ must take objects $P$ and $Q$ to $P\aand Q$, hence we have $\oc(P\aand Q) \cong \oc P \mand \oc Q$ in \bL.\footnote{The definition of Seely comonad is usually expanded out more explicity in terms of coherent isomorphisms such as these, but we will not need that.
  This is also apparently the origin of the sobriquet ``exponential'' for the modalities such as $\oc$, since ``exponentials turn additives into multiplicatives'' is akin to $\exp(a+b) = \exp(a)\cdot \exp(b)$.}
It follows that $\bL_\oc$ is a cartesian closed category, with exponential $(P\to Q) = (\oc P \imp Q)$.

This is the semantic side of the Girard translation.
Its syntactic side can be deduced as before, as a map $(-)\gr : \dH'[\Sigma] \to \dL[\Sigma]_\oc$ from the free cartesian closed category $\dH'[\Sigma]$ on some signature to the cartesian closed category $\dL[\Sigma]_\oc$ underlying the free $\ast$-autonomous category with finite products and Seely comonad on the same signature.
This yields a map from formulas and proofs in intuitionistic logic\footnote{We do not actually get a translation of all of intuitionistic logic, because $\dL[\Sigma]_\oc$ may not have coproducts.
  Thus, there is no obvious way to interpret $(P\lor Q)$; as noted already by Seely~\cite{seely:ll-staraut} it is hard to semantically justify Girard's formula $(P\lor Q)\gr = \oc P\gr \aor \oc Q\gr$.
  However, as we will see, in our case of interest these coproducts exist automatically.} to those in linear logic, whose syntactic rules can be read off of the cartesian closed structure of $\dL[\Sigma]_\oc$, e.g.\ $(P\land Q)\gr = P\gr \aand Q\gr$ and $(P\to Q)\gr = (\oc P\gr \imp Q\gr)$.

The semantic side of the antithesis translation, therefore, will be a construction of a model of affine logic from a model of intuitionistic logic.
By a model of affine logic we mean a $\ast$-autonomous category with finite products and Seely comonad that is \textbf{semicartesian}, meaning that its monoidal unit is also its terminal object (and hence the dualizing object $\bot$ is also the initial object).

The semantic antithesis translation is actually an instance of both the \emph{Chu construction}~\cite{chu:construction,chu:constr-app} and the \emph{Dialectica construction} in the form of~\cite{paiva:dialectica-chu}.
We will not describe these constructions in general, but only the specific case of interest to us, in which they coincide.
On the side of {subsets} rather than {predicates}, a similar notion was already introduced by~\cite[Chapter 3, \S2]{bb:constr-analysis} under the name \emph{complemented subset}; see \cref{thm:rel}.

\begin{defn}
  For a bicartesian closed category $\bH$, let $\chuh$ be the full subcategory of $\bH\times \bH\op$ determined by the pairs $P=(\pf P, \rf P)$ such that $\pf P \land \rf P$ is initial (equivalently, such that there is a morphism $\pf P \land \rf P\to \zero$; such a morphism is unique when it exists because $\zero$ is subterminal).
\end{defn}

Thus, a morphism $f : P\to Q$ in $\chuh$ consists of maps $\pf f : \pf P \to \pf Q$ and $\rf f : \rf Q \to \rf P$ in $\bH$.
In general, Chu and Dialectica constructions impose additional constraints on such morphisms --- the difference between the two being in the constraints --- but since $\zero$ is subterminal, those constraints are vacuous for us.

\begin{lem}\label{thm:bicart}
  $\chuh$ has finite products and coproducts.
\end{lem}
\begin{proof}
  They are inherited from $\bH\times \bH\op$, with the following definitions:
\begin{alignat*}{2}
  \top &= (\one,\zero) & \qquad
  P \aand Q &= (\pf P \land \pf Q, \rf P \lor \rf Q) 
  \\
  \bot &= (\zero,\one) &
  P \aor Q &= (\pf P \lor \pf Q, \rf P \land \rf Q).
\end{alignat*}
Note that we use \cref{rmk:lines} for these.
\end{proof}

\begin{lem}\label{thm:staraut}
  $\chuh$ is a semicartesian $\ast$-autonomous category.
\end{lem}
\begin{proof}
  The monoidal structure is defined by
  \[ P \mand Q \defeq (\pf P \land \pf Q, (\pf P \to \rf Q) \land (\pf Q \to \rf P)) \]
  A general Chu construction requires a pullback rather than a product in the refutations of $P\mand Q$, but subterminality of $\zero$ makes that unnecessary here.
  We leave associativity and symmetry to the reader (or standard references on the Chu and Dialectica constructions).
  For the unit, since $\top = (\one,\zero)$ we have
  \begin{align*}
    P \mand \top &= (\pf P \land \one, (\pf P \to \zero) \land (\one \to \rf P))\\
    &\cong (\pf P, (\pf P \to \zero) \land \rf P)\\
    &\cong (\pf P, \rf P).
  \end{align*}
  Here the final isomorphism is because $\pf P \to \zero$ is subterminal and there is a morphism $\rf P \to (\pf P \to \zero)$.
  The closed structure is defined by
  \[   P \imp Q \defeq ((\pf P \to \pf Q) \land (\rf Q \to \rf P), \pf P \land \rf Q). \]
  We leave the verification of adjointness to the reader.
  Now since $\bot = (\zero,\one)$, we have
  \begin{align*}
    P \imp \bot &\cong
    ((\pf P \to \zero) \land (\one \to \rf P), \pf P \land \one) \\
    &\cong
    (\rf P, \pf P)
  \end{align*}
  from which it follows immediately that $P \cong (P \imp \bot) \imp \bot$.
\end{proof}



\begin{lem}\label{thm:lnl}
  $\chuh$ has a Seely comonad.
  Moreover:
  \begin{itemize}
  \item The Seely comonad $\oc$ is idempotent.
  \item Its Kleisli category $(\chuh)_\oc$ is equivalent to \bH, and in particular has coproducts.
  \item The right adjoint $\chuh \to (\chuh)_\oc$ is strong monoidal; thus in addition to the Seely conditions we have $\oc(P\mand Q) \cong \oc P \mand \oc Q$ and $\wn (P \mor Q) \cong \wn P \mor \wn Q$.
  \end{itemize}
\end{lem}
\begin{proof}
  The forgetful map $\pfp{-} : \chuh \to \bH$ has a fully faithful left adjoint sending $P$ to $(P,\neg P)$, where $\neg P \defeq (P\to\zero)$ is the Heyting negation.
  The induced comonad and monad are
  \begin{equation*}
    \oc P \defeq (\pf P, \neg \pf P) \hspace{2cm}
    \wn P \defeq (\neg \rf P, \rf P).
  \end{equation*}
  Since the left adjoint is fully faithful, this comonad and monad are idempotent, and their Kleisli (and also Eilenberg--Moore) adjunctions coincide with the adjunction we started with.
  The definition of $\mand$ makes it clear that the right adjoint $\pfp{-}$ is strong symmetric monoidal, while for the left adjoint $F$ we have
  \begin{align*}
    F P \mand F Q &= ( P \land  Q, ( P \to \neg  Q) \land ( Q \to \neg P))\\
    &\cong ( P \land  Q, ( P \to Q \to \zero) \land ( Q \to P \to \zero))\\
    &\cong ( P \land  Q, (P \land Q \to \zero) \land (P \land Q \to \zero))\\
    &\cong ( P \land  Q, \neg (P \land Q))\\
    &= F(P\land Q)
  \end{align*}
  using that since $P \land Q \to \zero$ is subterminal, it is its own cartesian square.
\end{proof}


\cref{thm:bicart,thm:staraut,thm:lnl} define the \textbf{semantic antithesis translation}, which in fact is a right adjoint to a suitable forgetful functor.
As before, we obtain the \textbf{syntactic antithesis translation} as the unique morphism $(-)^\pm : \dA[\Sigma] \to \dH[\Sigma]_{\pm}$, where $\dA[\Sigma]$ is the free semicartesian $\ast$-autonomous category with products and a Seely comonad on the signature $\Sigma$.
This is a translation that maps formulas and proofs in affine logic to \emph{pairs} of formulas and proofs in intuitionistic logic.
It is given by explicit formulas that can be read off of the structure of $\chuh$, and which are shown in \cref{fig:antithesis}.
As with the G\"odel--Gentzen and Girard translations, the semantic derivation of these formulas automatically ensures soundness: any proof in affine logic automatically translates to a pair of proofs in intuitionistic logic.

\begin{figure}
  \centering
  \begin{alignat*}{2}
  \pfp{P\aand Q} &= \pf{P} \land \pf{Q} &\hspace{1cm} \rfp{P\aand Q} &= \rf{P} \lor \rf{Q}\\
  \pfp{P\aor Q} &= \pf{P} \lor \pf{Q} & \rfp{P\aor Q} &= \rf{P} \land \rf{Q}\\
  \pfp{\nt P} &= \rf P & \rfp{\nt P} &= \pf P\\
  \pfp{P\imp Q} &= (\pf P \to \pf Q) \land (\rf Q \to \rf P) & \rfp{P\imp Q} &= \pf P \land \rf Q\\
  \pfp{P\mand Q} &= \pf{P} \land \pf{Q} & \rfp{P\mand Q} &= (\pf P \to \rf Q) \land (\pf Q \to \rf P)\\
  \pfp{P\mor Q} &= (\rf P \to \pf Q) \land (\rf Q \to \pf P) & \rfp{P\mor Q} &= \rf P \land \rf Q\\
  \pf{\top} &= \one & \rf{\top} &= \zero \\
  \pf{\bot} &= \zero & \rf{\bot} &= \one \\
  \pfp{\oc P} &= \pf P & \rfp{\oc P} &= \neg(\pf P)\\
  \pfp{\wn P} &= \neg(\rf P) & \rfp{\wn P} &= \rf P.
\end{alignat*}
\caption{The syntactic antithesis translation for propositional logic}
\label{fig:antithesis}
\end{figure}

Of course, all the definitions in \cref{fig:antithesis} match our informal explanations of the connectives in \cref{sec:meaning}.
Thus any rigorous version of the BHK interpretation, yielding a bicartesian closed category that models propositional intuitionistic logic, can be enhanced to a model of propositional affine logic matching our meaning explanation.

Moreover, since the Kleisli category of $\chuh$ recovers $\bH$ again, the Girard translation undoes the antithesis translation:
\begin{center}
  \small
  \begin{tikzpicture}
  \node (a) at (0,0) {\begin{tabular}{c}
    intuitionistic logic\\
    (bicartesian\\ closed cat.)\\
    $\bH$ 
  \end{tabular}};
  \node (b) at (5,0) {\begin{tabular}{c}
    linear/affine logic\\
    ($\ast$-autonomous cat.\\ w/ Seely comonad)\\
    $\chuh$ 
  \end{tabular}};
  \node (c) at (9.5,0) {\begin{tabular}{c}
    intuitionistic logic\\
    (bicartesian\\ closed cat.)\\
    $(\chuh)_\oc \simeq \bH$
  \end{tabular}};
\draw[->] (a) -- node[auto] {antithesis} (b);
\draw[->] (b) -- node[auto] {Girard} (c);
\end{tikzpicture}
\end{center}
In other words, we can regard affine logic as an \emph{extension} of intuitionistic logic.

A very important point, however, is that unlike the G\"odel--Gentzen and Girard translation, the antithesis translation is not \emph{conservative} in the logical sense.
That is, there are statements in affine logic that always hold under the antithesis translation (i.e.\ in categories of the form $\chuh$), but are not provable in general affine logic.
Some such statements include:
\begin{mathpar}
  P \aand (Q\aor R) \logeq (P \aand Q) \aor (P \aand R)\and
  P\mand P\mand P \logeq P\mand P\and
  \oc\oc P \logeq \oc P \and
  \oc P \logeq P\mand P\and
  \wn (P\imp \oc P)\and
  \wn \oc P \types \oc \wn P\and
  \oc (P \aor Q) \logeq \oc P \aor \oc Q\and
  \oc (\ex x. P(x)) \logeq \ex x. \oc P(x).
\end{mathpar}
To a linear logician, this makes the logic look quite degenerate: much of the potential richness of the exponentials is invisible to the antithesis translation.
Therefore, unlike the G\"odel--Gentzen and Girard translations, we should not view the antithesis translation as a way to study affine logic by ``embedding'' it into intuitionistic logic. 
Instead, we view it as a way to \emph{use} affine logic as a tool for stating and proving definitions and theorems in intuitionistic logic.
(But we will return to this in \cref{sec:lcm}.)

\begin{rmk}\label{rmk:proofs}
  As noted above, the antithesis translation is sound for proofs.
  We will not make very extensive use of proofs in affine logic, but
  it is worth briefly summarizing the relevant rules (see e.g.~\cite{girard:ll} for more detail).

  Informally, affine logic looks like classical logic except that each hypothesis may only be used at most once (except for those with a $\oc$ on them).
  Put differently, the hypotheses of a theorem are implicitly combined with $\mand$, and since $P \nlogeq P\mand P$ they cannot be ``duplicated''.
  If we have $P\mand Q$ we can use both $P$ and $Q$ (at most once each), whereas if we have $P\aand Q$ we can choose to use $P$ or to use $Q$, but not both.
  Similarly, $\fa x.P(x)$ can only be instantiated at \emph{one} value of $x$.
  And dually, to prove $P\mand Q$ we prove $P$ and $Q$ with each hypothesis used only in one sub-proof, while to prove $P\aand Q$ we can use each hypothesis in both sub-proofs (once in each).

  A hypothesis of $P\aor Q$ can be case-split, while a hypothesis of $P\mor Q$ is used by disjunctive syllogism (e.g.\ proving $\nt P$ to conclude $Q$).
  To prove $P\aor Q$ we prove $P$ or prove $Q$, while to prove $P\mor Q$ we can assume $\nt P$ to prove $Q$ or vice versa.
  Implication $P\imp Q$ behaves as classically, including contraposition; proof by contradiction is universally valid.
  (Intuitionistically, proof by contradiction implies excluded middle $P\lor \neg P$ since $\neg(P\lor \neg P) \logeq (\neg P\land P)$ is a contradiction; but affinely $\ntp{P\aor \nt P} \logeq (\nt P \aand P)$ is no contradiction since we can't use \emph{both} $\nt P$ and $P$.)
\end{rmk}

\begin{rmk}\label{rmk:words}
  Notations such as $\aand/\aor$ and $\mand/\mor$ are fine for writing logical formulas explicitly, but for talking about mathematics it is useful to also represent each connective by an English word.
  Girard suggested to pronounce $\aand$ as ``with'', $\aor$ as ``plus'', $\mand$ as ``tensor'', and $\mor$ as ``par''; but most of these words have other meanings in mathematics and everyday English, leading to potential confusion.

  When it is understood that the ambient logic is linear or affine (so that there is no danger of confusion with $\land$ and $\lor$), I prefer to prounonce $\mand$ as simply ``and'', since this conjunction implicitly combines multiple hypotheses, is left adjoint to implication, and is very often used where both intuitionistic and classical mathematics use $\land$.
  Similarly, I prefer to pronounce $\aor$ as simply ``or'', since this is the disjunction that supports proof by cases, is almost always\footnote{We will see in the rest of the paper that that $\aand$ and $\mor$ do often appear in affine representations of \emph{concepts} from intuitionistic constructive mathematics.
    But the mathematician using intutionistic logic has to write out the corresponding more complicated statement using $\land$, $\lor$, and $\to$, hence is not used to using the words ``and'' and ``or'' for $\aand$ and $\mor$ respectively.} what an intuitionistic constructive mathematician means by ``or'', and about half the time is what a classical mathematician means by ``or'' as well.
  The other half of the time the classical mathematician means $\mor$, for which Girard's word ``par'' is at least unlikely to lead to confusion; but two less awkward-sounding possibilities are ``unless'' and ``or else'', since $P\mor Q$ is equivalent to both $\nt{Q}\imp P$ and $\nt{P} \imp Q$.
  Pronouncing $\aand$ is trickier, but Noah Snyder has suggested ``exclusive and'' (``xand'' for short) --- there is no formal relationship to the ``exclusive or'', but the word ``exclusive'' conveys the intuition of ``exactly one of the two'', which is how a hypothesis of $P\aand Q$ can be used in a linear proof: as $P$ or as $Q$, but not both.
\end{rmk}

\begin{rmk}\label{rmk:00}
  One might argue that $\chuh$ is too large, as it contains propositions like $(\zero,\zero)$ which are very far from being either provable \emph{or} refutable.
  We cannot constructively expect every proposition to be either provable or refutable,
  but we might try some weaker restriction like $\neg (\neg \pf P \land \neg \rf P)$.
  However, while propositions satisfying $\neg (\neg \pf P \land \neg \rf P)$ are closed under finitary \emph{connectives},
  their closure under \emph{quantifiers} is equivalent to the non-constructive law of ``double-negation shift'' $(\forall x. \neg\neg P(x)) \to (\neg\neg\forall x. P(x))$. 
  For a dramatic counterexample, let $\bH = \cO(\dR)$ be the open-set lattice of the real numbers, with $x:\dR$ and $\pf {P(x)} \defeq \dR \setminus \{x\}$ and $\rf {P(x)} \defeq \zero$; then $\neg (\neg \pf {P(x)} \land \neg \rf {P(x)})$ for all $x$, but $\fa x. (\pf {P(x)}, \rf {P(x)}) = (\zero,\zero)$.
\end{rmk}

\begin{rmk}\label{rmk:vickers}
  In fact, already Vickers~\cite{vickers:topology-via-logic} explicitly suggested considering separately for each proposition its \emph{affirmations} and \emph{refutations}:
  \begin{quote}\small
    Given an assertion, we can therefore ask --
    \begin{itemize}
    \item Under what circumstances could it be affirmed?
    \item Under what circumstances could it be refuted?~\cite[p6]{vickers:topology-via-logic}
    \end{itemize}
  \end{quote}
  It is thus natural to imagine propositions that can never be affirmed (i.e.\ proven) and also never refuted.
  The antithesis construction can thus be viewed as an intensional theory of the proofs and refutations of propositions without regard to ``truth''.
  Ignoring truth is constructively sensible since we can never directly observe it (we can only affirm or refute propositions), and intensionality is sensible since two propositions that happen to have the same extension (truth circumstances) might have different affirmations or refutations depending on how they are phrased.

  Vickers defines a proposition to be \emph{affirmative} if it is true exactly when it can be affirmed (i.e.\ proven), and \emph{refutative} if it is false exactly when it can be refuted.
  Our definition is a bit stronger, and more intensional: roughly speaking, we call a proposition affirmative if we \emph{know}, by virtue of its definition, that whenever it is true it can be affirmed, so that we can refute by showing that it cannot be affirmed.
  Similarly, we call a proposition refutative if we know that whenever it is false it can be refuted, so that we can affirm it by showing that it cannot be refuted.

  If intuitionistic logic is the logic of affirmative propositions, and co-intuitionistic logic~\cite{trafford:co-constructive,shramko:dil} is the logic of refutative propositions, then we can view affine logic as a logic of propositions that are subject to either affirmation or refutation.
\end{rmk}

\begin{rmk}
  Even if $\bH$ is a Boolean algebra, $\chuh$ is larger than $\bH$.
  For instance, $\chuzeroone = \{ (\zero,\one) \le (\zero,\zero) \le (\one,\zero) \}$ coincides with
  three-valued \luk{}ukasiewicz logic, where $(\zero,\zero)$ is called ``unknown'' or ``undefined''.
\end{rmk}

\begin{rmk}
  Dan Licata has pointed out that the antithesis translation has certain parallels with the natural deduction of~\cite{lc:clnatded} for classical (linear) logic that uses two judgments $P \;\mathbf{true}$ and $P\;\mathbf{false}$.
\end{rmk}

\begin{rmk}\label{rmk:constr-duality}
  There are other ways to add ``constructive negation'' to intuitionistic logic.
  We have already noted that the antithesis construction is both a Chu construction and a Dialectica construction, and both of these constructions have more general versions that also model linear logic.
  For instance, it is shown in~\cite{patterson:thesis} that the ``constructible falsity'' logic of~\cite{nelson:constr-falsity} is modeled by the Chu construction $\chu(\bH,\one)$.
  Compared to $\chuh$ (which is $\chu(\bH,\zero)$), this drops even the requirement $\neg (\pf P \land \rf P)$, allowing propositions like $(\one,\one)$ that are both provable and refutable.
  The lattice $\chu(\bH,\one)$ is $\ast$-autonomous but not semicartesian, so the units of $\mand$ and $\mor$ no longer coincide with those of $\aand$ and $\aor$.
  Instead we have the MIX rule~\cite{cs:pfth-bill}, i.e.\ the units of $\mand$ and $\mor$ coincide with each other; indeed they are precisely $(\one,\one)$.
\end{rmk}

\section{The antithesis translation for predicate logic}
\label{sec:types}

\renumberthms{subsection}

To do substantial mathematics we require not just propositional logic, but at least first-order logic, and often higher-order logic or even dependent types.
While attempting not to get bogged down by detail, in this section we describe antithesis translations for these richer theories.
I encourage a reader who is not already an afficionado of categorical semantics to skim this section on a first reading.

\subsection{First-order logic}
\label{sec:fol}

This corresponds semantically to the following notion, due essentially to Lawvere~\cite{lawvere:adjointness}.

\begin{defn}
  Let \cK be a 2-category with a forgetful functor $U:\cK \to \cCat$.
  A \textbf{\cK-valued hyperdoctrine} consists of:
  \begin{itemize}
  \item A category \bT with finite products.
  \item A pseudofunctor $\cP : \bT\op \to \cK$.
  \item For any product projection $\pi:A\times B \to A$ in \bT, the functor
    \[ U\pi^* : U\cP(B) \to U\cP(A\times B) \]
    has both a left adjoint $\Sigma_B$ and a right adjoint $\Pi_B$.
  \item The Beck-Chevalley condition holds, meaning that for any $f:A'\to A$ in \bT the induced maps are isomorphisms:
    \[ \Sigma_B \circ (f\times B)^* \toiso f^* \circ \Sigma_B \qquad
      f^* \circ \Pi_B \toiso \Pi_B \circ (f\times B)^*.
    \]
  \end{itemize}
\end{defn}

We think of the objects of \bT as representing \emph{types} and its morphisms as \emph{terms}, with the objects of $\cP(A)$ being \emph{predicates} on $A$.
The morphism $f^* : \cP(A) \to \cP(A')$ of \cK induced by $f:A'\to A$ represents \emph{substitution} into a predicate, while the adjoints $\Sigma_B$ and $\Pi_B$ act like existential and universal \emph{quantification}.
Note that $\Sigma_B$ and $\Pi_B$ are not in general morphisms of \cK.

If $\cK=\cInt$ is the 2-category of bicartesian closed categories, with functors preserving finite products, coproducts, and exponentials, and natural isomorphisms between them, we speak of an \textbf{intuitionistic hyperdoctrine}, and write $\Sigma_B = \exists_B$ and $\Pi_B = \forall_B$.
Similarly, if $\cK=\cAff$ is the 2-category of semicartesian $\ast$-autonomous categories with finite products and a Seely comonad, with functors that preserve all this structure up to isomorphism, and natural isomorphisms between them, we speak of an \textbf{affine hyperdoctrine}, and write $\Sigma_B = \ex_B$ and $\Pi_B = \fa_B$.

\begin{eg}\label{eg:cplt-hyd}
  If \bH is a complete and cocomplete cartesian closed category, then there is an intuitionistic hyperdoctrine with $\bT = \bSet$ and $\cP(A) = \bH^A$.
  The adjoints $\Pi_B$ and $\Sigma_B$ are given by products and coproducts.
  Similarly, if \bL is a complete and cocomplete semicartesian $\ast$-autonomous category with a Seely comonad, then $\bT = \bSet$ and $\cP(A) = \bL^A$ defines an affine hyperdoctrine.
\end{eg}

\begin{egs}\label{eg:flim-hd}
  Suppose \bT is a category with finite limits.
  Then there is a pseudo\-functor $\cP : \bT\op \to \cCat$ sending $A$ to the poset of subobjects of $A$, which is an intuitionistic hyperdoctrine if and only if \bT is a Heyting category.
  There is also such a pseudofunctor sending $A$ to the slice category $\bT/A$, which is an intuitionistic hyperdoctrine if and only if \bT is locally cartesian closed with finite coproducts.

  More generally, any full comprehension category having $\Sigma$-types, $\Pi$-types (with function extensionality), and finite sum types is an intuitionistic hyperdoctrine.
  If it also has propositional truncations, in the sense of~\cite{hottbook}, then its ``h-propositions'' (types with at most one element) also form an intuitionistic hyperdoctrine.
  If it has universe objects closed under the relevant type formers, the elements of any particular universe also form an intuitionistic hyperdoctrine.
\end{egs}

\begin{rmk}\label{rmk:aff-quant}
  Even in an affine hyperdoctrine, the base category \bT is still \emph{cartesian} monoidal.
  We could allow \bT to be \emph{semicartesian} monoidal, as then it would still have ``projections'' whose adjoints would supply quantifiers; something similar appears in first-order Bunched Implication~\cite{op:bi}.
  But since the antithesis translation leaves the base category \bT unchanged, we have no need for this generality, although we will mention it again in \cref{rmk:set-hyd}.
\end{rmk}

We now extend the antithesis translation to hyperdoctrines.

\begin{lem}
  The semantic antithesis translation from \cref{sec:chu} defines a 2-functor
  \[ (-)_\pm : \cInt \to \cAff. \]
\end{lem}
\begin{proof}
  Immediate.
  Note that since $\chuh$, like its substrate $\bH\times \bH\op$, is partly covariant and partly contravariant, it can only be 2-functorial on natural \emph{isomorphisms}; this is why we defined $\cInt$ and $\cAff$ to contain only these.
\end{proof}

\begin{thm}
  If $\cP : \bT\op \to \cInt$ is an intuitionistic hyperdoctrine, the composite
  \[ \bT\op \xto{\cP}\cInt \xto{(-)_{\pm}} \cAff \]
  is an affine hyperdoctrine $\cP_{\pm}$.
\end{thm}
\begin{proof}
  It remains to show that if $\pi^* : \cP(A) \to \cP(A\times B)$ is a bicartesian closed functor with left and right adjoints $\exists_B$ and $\forall_B$, then $(\pi^*)_{\pm} : \cP(A)_{\pm} \to \cP(A\times B)_\pm$ also has left and right adjoints satisfying the Beck-Chevalley condition.
  We define these by the expected formulas:
  \begin{mathpar}
    \ex_B (\pf P,\rf P) \defeq (\exists_B \pf P, \forall_B \rf P)\and
    \fa_B (\pf P,\rf P) \defeq (\forall_B \pf P, \exists_B \rf P).
  \end{mathpar}
  We leave it to the reader to verify that this works.
\end{proof}

This defines the \textbf{semantic antithesis interpretation} for first-order logic.

Moving now to syntax, we consider a formal system of first-order logic with \emph{types}, each containing \emph{terms} or \emph{elements} $t:A$ that may involve variables belonging to other types, and a class of \emph{propositions} that may also involve variables belonging to types.
We assume finite \emph{product types} $A\times B$, whose elements are ordered pairs, and a \emph{unit type} $1$ that has one element.
Propositions are related by \emph{entailments}
\[ P, Q \types_{x:A, y:B} R \]
where $P,Q,R$ are propositions involving only the variables $x$ (of type $A$) and $y$ (of type $B$).
In \emph{intuitionistic} first-order logic, we equip the propositions with the usual intuitionistic logical operations:
\[ \land, \lor, \one, \zero, \to, \neg, \forall, \exists \]
and the usual intuitionistic rules of deduction.
Similarly, in \emph{affine} first-order logic, we equip the propositions with the affine logical operations:
\[ \mand, \mor, \aand, \aor, \top, \bot, \imp, \ntp{-}, \fa, \ex, \oc, \wn \]
together with the usual affine rules of deduction (that is, the rules of~\cite{girard:ll} for linear logic, plus weakening).

\begin{notn}
  For clarity, we sometimes annotate a quantified variable by the type to which it belongs, e.g.\ $\exists x^A. P(x)$ if $x:A$.
\end{notn}

By standard arguments (see e.g.~\cite{jacobs:cltt}), the syntax of either kind of first-order logic, starting from some signature of base types, terms, and propositions, presents a free hyperdoctrine of the appropriate sort.
(We gloss over coherence questions here, which can be resolved as in~\cite{hofmann:ttinlccc,lw:localuniv}, and are automatic in the proof-irrelevant case when the categories $\cP(A)$ are posets.)
Thus, as in the propositional case, by applying the semantic antithesis translation to the syntactic intuitionistic hyperdoctrine, we obtain a \textbf{syntactic antithesis translation} of affine first-order logic into intuitionistic first-order logic.
This translation leaves the types unchanged, acts on the propositional connectives as in \cref{fig:antithesis}, and acts on the quantifiers as shown in \cref{fig:antithesis-fo}.
As before, the derivation of this translation from the semantic version means that it is automatically sound for proofs (though not complete).

\begin{figure}
  \centering
  \begin{alignat*}{2}
    \pfp{\ex x. P(x)} &= \exists x. \pf P(x) &\qquad
    \rfp{\ex x. P(x)} &= \forall x. \rf P(x)\\
    \pfp{\fa x. P(x)} &= \forall x. \pf P(x) &\qquad
    \rfp{\fa x. P(x)} &= \exists x. \rf P(x)
  \end{alignat*}
  \caption{The syntactic antithesis translation for first-order logic}
  \label{fig:antithesis-fo}
\end{figure}

We now consider various additional structure that can be added to a hyperdoctrine, and their corresponding operations in syntax.

\subsection{Comprehension}
\label{sec:comprehension}

Let $\cCat_t$ be the 2-category of categories with a terminal object.\footnote{If we wanted to define comprehension for \emph{linear} hyperdoctrines in addition to affine ones, we would need to replace the terminal object $1$ by the monoidal unit.}
We denote such terminal objects generically by $1$.

\begin{defn}[{\cite{lawvere:comprehension}}]\label{defn:comp}
  Suppose $U:\cK \to \cCat$ factors through $\cCat_t$.
  A \cK-valued hyperdoctrine $\cP : \bT\op\to\cK$ has \textbf{comprehension} if for all $A\in \bT$ and $P\in \cP(A)$, the following functor is representable:
  \begin{align*}
    (\bT/A)\op &\to \bSet\\
    (f:B\to A) &\mapsto \cP(B)(1,f^*(P))
  \end{align*}
  We denote a representing object by $i_P : \{P\} \to P$: it can be thought of as the subtype of $A$ consisting of those elements that satisfy $P$.
\end{defn}

\begin{eg}
  The intuitionistic hyperdoctrine $\cP(A) = \bH^A$ has comprehension, with $\{P\}$ the set of pairs $(a,p)$ where $a\in A$ and $p\in \bH(\one,P_a)$.
  The same is true for the affine hyperdoctrine $\cP(A) = \bL^A$.
\end{eg}

\begin{eg}
  \cref{eg:flim-hd} all have comprehension.
  The comprehension of a subobject or object of a slice category is itself, while a comprehension category includes as data a comprehension operation.
\end{eg}

\begin{prop}\label{thm:comp}
  If \cP is an intuitionistic hyperdoctrine with comprehension, then $\cP_{\pm}$ is an affine hyperdoctrine with comprehension.
\end{prop}
\begin{proof}
  Since $\top = (\one,\zero)$ in $\cP(A)$, a morphism $\top \to P$ in $\cP(A)$ consists of morphisms $\one \to \pf{P}$ and $\rf{P}\to \zero$.
  But the latter is unique it if exists, which it does if there is a morphism $\one \to \pf{P}$ since $\pf{P}\land \rf{P} \to \zero$.
  Thus, we can define $\{P\} \defeq \{\pf{P}\}$.
\end{proof}

Note that comprehension in the antithesis model discards all information about refutations; hence in particular $\{P\} = \{\oc P\}$.
More generally, we have:

\begin{prop}\label{thm:comp-aff}
  For $P\in\cP(A)$ in any affine hyperdoctrine with comprehension, there is a morphism from $\top$ to $i_P^*( \oc P)$ in $\cP(\{P\})$.
\end{prop}
\begin{proof}
  By definition, there is a morphism from $\top$ to $i_P^*(P)$ in $\cP(\{P\})$.
  Now we apply the functor $\oc$ and use the fact that $\oc \top\cong \top$.
\end{proof}

Since $\oc$ is not in general idempotent (though it is in the antithesis model), this does not imply $\{P\} \cong \{\oc P\}$.
But it does make $P$ arbitrarily duplicable over $\{P\}$, i.e.\ we have $P \imp P\mand P$ over $\{P\}$.
Thus, we have to be careful to avoid comprehension whenever we want to retain ``refutational'' information.
This leads in particular to a wider gap betwen ``subsets of $A$'' and ``sets that inject into $A$''.
We will return to this point in \cref{rmk:affirm-axioms} and \cref{sec:sets}.

Nevertheless, we cannot really do mathematics without comprehension.
Fortunately, it is a fairly harmless assumption: even if we start from a hyperdoctrine without comprehension, we can add comprehensions ``freely'', replacing the types by ``formal comprehensions'' or ``pre-sets'' (types with an ``existence predicate'').

\begin{prop}\label{thm:int-comp}
  For any intuitionistic hyperdoctrine $\cP : \bT\op \to \cInt$, there is an intuitionistic hyperdoctrine with comprehension $\cP\ch : (\bT\ch)\op \to \cInt$ in which:
  \begin{itemize}
  \item The objects of $\bT\ch$ are pairs $(A,P)$ with $A\in\bT$ and $P \in \cP(A)$.
  \item The morphisms $(A,P) \to (B,Q)$ are pairs of $f:A\to B$ and $g:P \to f^*Q$.
  \item The objects of $\cP\ch(A,P)$ are those of $\cP(A)$.
  \item The morphisms $Q\to R$ in $\cP\ch(A,P)$ are morphisms $P\land Q \to R$ in $\cP(A)$.
  \end{itemize}
\end{prop}
\begin{proof}
  The product in $\bT\ch$ is $(A,P) \times (B,Q) \defeq (A\times B, \pi_1^*P \land \pi_2^* Q)$, and the terminal object is $(1,\one)$.
  It is straightforward to show that $\cP\ch(A,P)$ is bicartesian closed and that $\cP\ch$ is a functor.
  The quantifiers are $\exists_{(B,Q)}(R) \defeq \exists_B(Q\land R)$ and $\forall_{(B,Q)}(R) \defeq \forall_B(Q\to R)$.
  The comprehension of $Q \in \cP\ch(A,P) = \cP(A)$ is $(A, P\land Q)$.
\end{proof}

\begin{rmk}
  The construction of \cref{thm:int-comp} appears in many places with many names.
  Categorically, $\bT\ch$ is the ``Grothendieck construction'' of the composite $\bT\op\xto{\cP}\cInt\xto{U}\cCat$; for the Calculus of Constructions it is the ``first-order deliverables'' of~\cite{mckinna:thesis}.
  The general construction has a universal property of adding comprehensions ``freely''; see e.g.~\cite{trotta:cplt-elem-doc} for the posetal version.
\end{rmk}

\begin{prop}\label{thm:aff-comp}
  For any affine hyperdoctrine $\cP : \bT\op \to \cAff$, there is an affine hyperdoctrine with comprehension $\cP\ch : (\bT\ch)\op \to \cAff$ in which:
  \begin{itemize}
  \item The objects of $\bT\ch$ are pairs $(A,P)$ with $A\in\bT$ and $P \in \cP(A)$.
  \item The morphisms $(A,P) \to (B,Q)$ are pairs of $f:A\to B$ and $g:\oc P \to f^*Q$.
  \item The objects of $\cP\ch(A,P)$ are those of $\cP(A)$.
  \item The morphisms $Q\to R$ in $\cP\ch(A,P)$ are morphisms $\oc P\mand Q \to R$ in $\cP(A)$.
  \end{itemize}
\end{prop}
\begin{proof}
  The product in $\bT\ch$ is $(A,P) \times (B,Q) \defeq (A\times B, \pi_1^*P \aand \pi_2^* Q)$, and the terminal object is $(1,\top)$.
  Note that $\oc(\pi_1^*P \aand \pi_2^* Q) \cong \pi_1^* \oc P \mand \pi_2^* \oc Q$.
  The same operations as in $\cP(A)$ lift to make $\cP\ch(A,P)$ semicartesian $\ast$-autonomous with products and a Seely comonad.
  The quantifiers are $\ex_{(B,Q)}(R) \defeq \ex_B(\oc Q\mand R)$ and $\fa_{(B,Q)}(R) \defeq \fa_B(\oc Q\imp R)$.
\end{proof}

Syntactically, comprehension corresponds to an operation taking a proposition $P$ in the context of a variable $x:A$ to a type $\setof{x:A | P(x)}$.
In the intuitionistic case, rules for this operation can be found in~\cite[\S4.6]{jacobs:cltt};
note that $P$ cannot contain any variables other than $x$.
(It is possible to formulate a more general kind of comprehension without this restriction, at the expense of introducing dependent types.)
The affine case is essentially identical, but due to \cref{thm:comp-aff} we will emphasize the essentially affirmative nature of affine comprehension by writing it as
\[\ocsetof{x:A | P(x)}.\]

\subsection{Leibniz--Lawvere equality}
\label{sec:equality}

This operation will not be very useful for us, but we sketch it briefly to explain why.

\begin{defn}[{\cite{lawvere:comprehension}}, {\cite[\S3.4]{jacobs:cltt}}]\label{defn:hyd-eq}
  A \cK-valued hyperdoctrine $\cP : \bT\op\to\cK$ has \textbf{Leibniz--Lawvere equality} if for any diagonal $\triangle_A : A\to A\times A$ and object $B$, the functor $(1_B\times \triangle_A)^*$ has a partial left adjoint defined at the terminal object and satisfying the Beck-Chevalley condition.
\end{defn}

We denote the value of this left adjoint by $\sfeq_A \in \cP(B\times A\times A)$.


\begin{prop}\label{thm:ant-eq}
  If \cP is an intuitionistic hyperdoctrine with Leibniz--Lawvere equality, then $\cP_{\pm}$ also has Leibniz--Lawvere equality with $\sfeq_A^\pm \defeq (\sfeq_A, \neg \sfeq_A)$.\qed
\end{prop}

Note that $(\sfeq_A, \neg \sfeq_A)$ is always affirmative.
In fact, more generally we have:

\begin{prop}\label{thm:eq-aff}
  In any affine hyperdoctrine with Leibniz--Lawvere equality, the predicate $\sfeq_A$ is affirmative, i.e.\ we have a map $\sfeq_A \to \oc \sfeq_A$ in $\cP(B\times A\times A)$.
\end{prop}
\begin{proof}
  By the universal property of $\sfeq_A$, such a morphism is determined by a map $\top \to \triangle^* \oc \sfeq_A \cong \oc \triangle^* \sfeq_A$ in $\cP(B\times A)$.
  But $\oc\top \cong \top$, so it suffices to give a morphism $\top \to \triangle^* \sfeq_A$, and this is just the unit of the partial adjunction.
\end{proof}

See~\cite{grishin:linear} for a more syntactic argument.
Unlike the analogous \cref{thm:comp-aff} for comprehension, this result makes Leibniz--Lawvere equality unsuitable for us.
Indeed, equality was our primary example in \cref{sec:introduction} of a \emph{non-affirmative} proposition (with nontrivial refutations).
Thus, instead of using Leibniz--Lawvere equality, we will follow~\cite{bishop:fca,hjp:tripos} in equipping types with equality relations (see \cref{sec:sets,sec:linear-sets}).

\subsection{Higher-order structures}
\label{sec:hol}

Many higher-order structures are properties of the base category \bT that don't affect the hyperdoctrine over it; these are automatically preserved by the antithesis construction.
For instance, we can ask that \bT be cartesian closed; this corresponds syntactically to enhancing the base type theory of our first-order logic to a simply-typed $\lambda$-calculus, with \emph{operation types} $B^A$ whose canonical elements are abstractions $\lambda x.t$, satisfying $\beta$ and $\eta$ conversion rules.\footnote{These rules must hold up to a \emph{judgmental} equality of terms, the syntactic counterpart of equality of morphisms in \bT.
  This is distinct from the equality \emph{propositions} of \cref{sec:equality}.}
(These are usually called \emph{function types}, but for us ``functions'' will be defined to be operations that respect a given equality relation; see \cref{sec:sets,sec:linear-sets}.)
We observe:

\begin{prop}\label{thm:comp-cc}
  If \bT is cartesian closed, then so is the $\bT\ch$ defined in \cref{thm:int-comp,thm:aff-comp}.
\end{prop}
\begin{proof}
  The exponentials in the two cases are
  \begin{align*}
    (B,Q)^{(A,P)} &\defeq (B^A, \forall_A(\pi_2^*P \to \ev^*Q))\\
    (B,Q)^{(A,P)} &\defeq (B^A, \fa_A(\oc \pi_2^*P \imp \ev^*Q))
  \end{align*}
  where $\ev : B^A \times A \to B$ is the evaluation in \bT.
\end{proof}

Similarly, we can ask that \bT be equipped with a comprehension category or category with families, unrelatedly to the hyperdoctrine.
This corresponds syntactically (again, modulo coherence issues that can be addressed as in~\cite{hofmann:ttinlccc,lw:localuniv}) to enhancing the base type theory with dependent types, possibly with any desired type formers such as $\Sigma$-types, $\Pi$-types, identity types, etc.\ (which are, at least \emph{a priori}, unrelated to the hyperdoctrine and its quantifiers).
Put differently, this results in a \emph{logic-enriched dependent type theory} in the sense of~\cite{ag:colldtt} in which the logic is that of the hyperdoctrine.
Since this structure is likewise undisturbed by the antithesis construction on the hyperdoctrine, we have an antithesis translation from affine-logic-enriched type theory into intuitionistic-logic-enriched type theory.
We leave it to the reader to extend \cref{thm:comp-cc} to such cases.

\begin{eg}
In particular, as in \cref{eg:flim-hd}, we can regard a comprehension category with $\Sigma$- and $\Pi$-types as \emph{itself} an intuitionistic hyperdoctrine with comprehension.
That is, any type theory admits an intuitionistic-logic enrichment given by propositions-as-types.
Applying the antithesis construction, we obtain a translation from affine-logic-enriched dependent type theory into ordinary intuitionistic dependent type theory.
Similarly, we can apply the antithesis construction to the hyperdoctrine of h-propositions, or the elements of some fixed universe.
\end{eg}

\begin{rmk}\label{rmk:dtt}
This way of applying the antithesis construction to dependent type theory acts only on the ``top level'' of type dependency, and we will not attempt to extend it further in this paper (although see \cref{rmk:set-hyd}).
In particular, there are by now many different approaches to ``linear dependent type theory'', and it is unclear which, if any, of them would be appropriate for such an extended translation.
The lack of a definite answer to this question is one obstacle to a native ``affine constructive mathematics'', since dependent type theory has definite advantages over higher-order logic as a foundational system for all of mathematics.
But we can still use affine logic, by way of the antithesis translation, to say useful things about the top-level logic of intuitionistic dependent type theory.
\end{rmk}

\subsection{Generic predicates}
\label{sec:generic-predicates}

This is the primary higher-order structure that \emph{does} interact with a hyperdoctrine.

\begin{defn}
  A \textbf{generic predicate}\footnote{Called a ``weak generic object'' in~\cite[\S5.2]{jacobs:cltt}.} in a hyperdoctrine $\cP : \bT\op\to\cK$ is an object $\Om\in \bT$ with an element $\done \in \cP(\Om)$ such that for any $A\in \bT$ and $P\in \cP(A)$, there exists a (not necessarily unique) $f:A\to\Om$ and isomorphism $P \cong f^*(\done)$.
\end{defn}

\begin{eg}
  If \bH is a small complete Heyting algebra, the intuitionistic hyperdoctrine $\cP(A) = \bH^A$ has a generic predicate with $\Om$ the underlying set of \bH and $\done \in \bH^{\Omega}$ the identity function.
  A similar argument applies to the affine hyperdoctrine $\cP(A) = \bL^A$, if \dL is a small $\ast$-autonomous complete lattice with a Seely comonad.
\end{eg}

\begin{eg}
  The subobject classifier of an elementary topos is a generic predicate for the hyperdoctrine of subobjects.
  More generally, a preorder-valued intuitionistic hyperdoctrine with cartesian closed base and a generic predicate is a \emph{tripos}~\cite{hjp:tripos}.
\end{eg}

\begin{eg}
  A comprehension category, regarded as an intuitionistic hyperdoctrine, does not generally have a generic predicate.
  However, its restricted hyperdoctrine of elements of some universe does have one, namely the universe.
\end{eg}

\begin{prop}
  If $\Omega$ is a generic predicate for $\cP$, then $(\Omega,1)$ is a generic predicate for the $\cP\ch$ defined in \cref{thm:int-comp,thm:aff-comp}, where $1$ is the terminal object of $\cP(\Omega)$.\qed
\end{prop}

\begin{prop}\label{thm:comp-gp}
  If \cP is an intuitionistic hyperdoctrine with comprehension and a generic predicate, then $\cP_{\pm}$ also has a generic predicate.
\end{prop}
\begin{proof}
  Over $\Om\times \Om$ we have two canonical predicates $\pi_1^*\done$ and $\pi_2^*\done$.
  Let $\Om_{\pm}$ be the comprehension of $\neg (\pi_1^*\done \land \pi_2^*\done)$.
  Then to give a morphism $f:A \to \Om_{\pm}$ is the same as to give two morphisms $\pf f : A\to \Om$ and $\rf f : A \to \Om$, corresponding to predicates $(\pf f)^*(\done)$ and $(\rf f)^*(\done)$ over $A$, such that $(\pf f)^*(\done)\land (\rf f)^*(\done)$ is initial in $\cP(A)$.
  Thus, $\Om_{\pm}$ is a generic predicate for $\cP_{\pm}$.
\end{proof}

\begin{eg}
  In the hyperdoctrine of subobjects in a topos, $\Om_{\pm}$ is the subobject of $\Om\times \Om$ consisting internally of pairs of incompatible propositions.
\end{eg}

Syntactically, a generic predicate corresponds to having an (impredicative) type $\Omega$ of all propositions.
The usual way of presenting a higher-order type theory of this sort is to \emph{define} the propositions to be the terms of type $\Omega$, or at least bijective to them.
In a hyperdoctrine with a generic predicate, we have only an essentially surjective function $\bT(A,\Omega) \to \cP(A)$, but as in~\cite{hjp:tripos} we can use this to replace $\cP(A)$ by an equivalent category whose set of objects is precisely $\bT(A,\Omega)$.

Thus, with \cref{thm:comp-gp} we can translate affine higher-order logic into intuitionistic higher-order logic, where both have comprehensions and a type of propositions.
The syntactic expression of the type of propositions derived from \cref{thm:comp-gp} is
\[ \Om_{\pm} \defeq \setof{ (\pf p,\rf p) : \Om\times \Om | \neg (\pf p \land \rf p) }.
\]
Note that unlike the syntactic antithesis translations for propositional and first-order logic in \cref{fig:antithesis,fig:antithesis-fo}, this is a definition of a \emph{type}, not a proposition or predicate.
The antithesis translation does not modify the collection of types or most of the operations on them, but it does change the type of propositions.

\subsection{Infinity}
\label{sec:infinity}

Finally, as a starting point for concrete mathematics, we require at least a type of natural numbers that permits definitions by recursion and proofs by induction.
The intuitionistic version of this is straightforward.

\begin{defn}\label{thm:inno}
  In an intuitionistic hyperdoctrine, a \textbf{natural numbers type} is an object $N\in \bT$ together with morphisms $o:1\to N$ and $s:N\to N$ such that:
  \begin{enumerate}
  \item For any objects $A,B\in \bT$ with $f:A\to B$ and $g:A\times N\times B\to B$, there exists a morphism $h:A\times N\to B$ making the following diagrams commute:\label{item:nno-rec}
    \[
      \begin{tikzcd}
        A \ar[r,"{(1_A,o)}"] \ar[dr,"f"'] & A\times N  \ar[d,"h"]\\
        & B
      \end{tikzcd}
      \qquad
      \begin{tikzcd}
        A\times N \ar[d,"{(1_{A\times N},h)}"'] \ar[r,"{1_A \times s}"] & A\times N \ar[d,"h"]\\
        A\times N\times B \ar[r,"g"'] & B.
      \end{tikzcd}
      \]
  \item For any predicate $P\in \cP(A\times N)$, the following entailment holds:\label{item:nno-ind}
    \begin{equation}
      P_a(0)\,\land\, \forall k. (P_a(k) \to P_a(k+1)) \;\types_{a:A}\; \forall n. P_a(n).\label{eq:iind}
    \end{equation}
  \end{enumerate}
\end{defn}

Syntactically, the diagrams in~\ref{item:nno-rec} say that $h(a,0) = f(a)$ and $h(a,n+1) = g(a,n,h(a,n))$, as we expect for an operation defined recursively.
We have expressed the induction rule~\eqref{eq:iind} in syntax already; the reader is free to re-express it in more semantic language.
Note that $h$ in~\ref{item:nno-rec} is not required to be unique; thus $N$ is only a ``weak natural numbers object'' in \bT.
Such uniqueness is irrelevant for us, as with a defined equality on the codomain (see \cref{sec:sets,sec:linear-sets}) operations defined by recursion will always be unique as functions.

The affine version of induction is somewhat less obvious, but the following will be appropriate for us.

\begin{defn}
  In an affine hyperdoctrine, a \textbf{natural numbers type} is $(N,o,s)$ satisfying~\ref{item:nno-rec} of \cref{thm:inno} and such that for any predicate $P\in \cP(A\times N)$, the following entailment holds:
\begin{equation}
  P(0)\;\mand\; \oc\, \fa k. (P(k) \imp P(k+1)) \;\types_{P:\Omega^\dN}\; \fa n. P(n).\label{eq:lind}
\end{equation}
\end{defn}

Note that the induction step is marked with the modality $\oc$.
This is natural if we think of $\oc$ as denoting a hypothesis that can be used more than once, as the induction step must certainly be ``used'' $n$ times in order to conclude $P(n)$.
But it is also mandated by the antithesis translation.

\begin{lem}
  If an intuitionistic hyperdoctrine \cP contains a natural numbers type, so does its antithesis translation $\cP_{\pm}$.
\end{lem}
\begin{proof}
  The antithesis translation of~\eqref{eq:lind} consists of the following two entailments:
  \begin{align*}
    \pf P(0)\,\land\, \forall k. ((\pf P(k) \to \pf P(k+1)) \land (\rf P(k+1) \to \rf P(k))) \;&\types_{P:\Omega^\dN}\; \forall n. \pf P(n) \\
    \exists n. \rf P(n)\,\land\, \forall k. ((\pf P(k) \to \pf P(k+1)) \land (\rf P(k+1) \to \rf P(k))) \;&\types_{P:\Omega^\dN}\; \rf P(0)
\end{align*}
  Both can be proven easily from~\eqref{eq:iind}.
\end{proof}

On the other hand, if we drop the $\oc$ in~\eqref{eq:lind}, then its antithesis translation would also include a third entailment
\begin{equation}
  \pf P(0)\,\land\, \exists n. \rf P(n) \;\types_{P:\Omega^\dN} \exists k. (\pf P(k) \land \rf P(k+1))\label{eq:non-ind}
\end{equation}
which is equivalent to excluded middle.
Specifically, let $P(0) = (\top,\bot)$ and $P(n) = (\bot,\top)$ for $n\ge 2$, while $P(1) = (Q,\neg Q)$ for some arbitrary statement $Q$; then by~\eqref{eq:non-ind} we have either $\neg Q$ (if $k=0$) or $Q$ (if $k=1$).
Thus, we are forced to formulate affine induction as in~\eqref{eq:lind}.

\begin{prop}
  If \cP has a natural numbers type, so does the $\cP\ch$ defined in \cref{thm:int-comp,thm:aff-comp}.\qed
\end{prop}

\subsection{Conclusions}
\label{sec:conclusions}

In the rest of the paper, we will apply the antithesis translation to recover well-known intuitionistic definitions from naturally-defined affine ones.
On both intuitionistic and affine sides we will use higher-order logic with comprehension, operation types, a generic predicate, and a natural numbers type.
We have seen that this combination is preserved by the antithesis translation, and that nearly all naturally occurring models of intuitionistic logic satisfy it.
(One exception is that a tripos need not have comprehension, but we can add comprehensions to it as in \cref{thm:int-comp}.)
If necessary to disambiguate between affine and intuitionistic notions, we will use the annotations $\L$ for affine and $\I$ for intuitionistic; e.g.\ ``$\L$-predicate'' and ``$\I$-predicate'', or $\Omega^\L$ and $\Omega^\I$.

\begin{rmk}\label{rmk:affirm-axioms}
  We will frequently be discussing \emph{structured types} such as groups, rings, posets, topological spaces, and even sets (types with an equality predicate).
  Since ordinary first-order and higher-order logic do not allow quantification over types (i.e.\ ``for all types $A$'' internally to the logic), theorems relating to structured types are technically metatheorems.
  In particular, the \emph{axioms} of such structures are assumed \emph{entailments} $P\types Q$, or equivalently $\types (P\imp Q)$, and hence imply $\types \oc (P\imp Q)$.
  In other words, \emph{axioms are affirmative}.

  Another take on this is possible if we use a base theory with dependent types and type universes.
  In this case, we can quantify over all small types (those belonging to some universe \cU), and so it would be possible to assume non-affirmative axioms about a structured small type.
  However, if we also have comprehension, we can define \emph{types of} small structured types (e.g.\ the type of small groups), and in this case by \cref{thm:comp,thm:comp-aff} the axioms will again be affirmative, or at least arbitrarily duplicable.
\end{rmk}

\renumberthms{section}

\section{Intuitionistic sets and functions}
\label{sec:sets}

In most of the rest of the paper, we will first state definitions in affine logic and then translate them into intuitionistic logic.
But for sets and equality, we begin with the intuitionistic context to fix conventions.

As mentioned after \cref{thm:eq-aff}, we follow Bishop's dictum:
\begin{quote}\small
  The totality of all mathematical objects constructed in accordance with certain requirements is called a \emph{set}.
  The requirements of the construction, which vary with the set under consideration, determine the set.\dots
  Each set will be endowed with a binary relation $=$ of \emph{equality}.
  This relation is a matter of convention, except that it must be an \emph{equivalence relation}\dots
  \cite[\S2.1]{bb:constr-analysis}
\end{quote}
Thus a ``Bishop set'' has two ingredients: the ``requirements'', which we regard as the specification of a type, and the equality, which is an equivalence relation.

\begin{defn}\label{defn:iset}
  A \textbf{set} is a type $A$ with a predicate $\ieq$ on $A\times A$ such that
  \begin{equation*}
    \begin{array}{rll}
      &\types_{x: A}& x\ieq x\\
      x\ieq y &\types_{x,y: A}& y\ieq x\\
      (x\ieq y) \land (y\ieq z) &\types_{x,y,z: A}& x\ieq z.
    \end{array}
  \end{equation*}
\end{defn}

\begin{rmk}
  Suppose we start with a hyperdoctrine without comprehension, such as a tripos, apply \cref{thm:int-comp} to obtain comprehension, and then interpret \cref{defn:iset}.
  In terms of the \emph{original} hyperdoctrine, the resulting notion of ``set'' is essentially a \emph{partial} equivalence relation.
  It is common in tripos theory and realizability to work directly with partial equivalence relations.
  Instead, we divorce existence from equality, incorporating the former into a comprehension operation on types.
  This matches Bishop's two-stage conception better, as well as common mathematical practice (the construction of subsets is distinct from quotient sets), and generalizes better to the affine context.
\end{rmk}

\begin{eg}\label{eg:triv-set}
  If our base theory has Leibniz--Lawvere equality types $\sfeq_A$, then every type $A$ has a ``minimal'' structure of a set, with equality $\sfeq_A$.
\end{eg}

\begin{notn}
  If $A$ is a set and $P$ is a predicate on its underlying type, we implicitly give the comprehension $\setof{x: A | P(x)}$ the same equality predicate as $A$, making it again a set.
\end{notn}


\begin{eg}\label{eg:set-prod}
  If $A$ and $B$ are sets, their \textbf{cartesian product set} is the product type $A\times B$ with
  \(((x_1,y_1)\ieq (x_2,y_2))\defeq (x_1\ieq x_2)\land (y_1\ieq y_2)\).
\end{eg}

\begin{eg}\label{eg:set-omega}
  The type of propositions $\Omega$ is a set with 
  \( (P \ieq  Q) \defeq (P \iff Q). \)
\end{eg}


\begin{defn}\label{defn:rel}
  A \textbf{relation} on a set $A$ is a predicate $P$ on its underlying type such that
  \[ (x\ieq y) \land P(x) \types_{x,y: A} P(y). \]
  A relation is also called a \textbf{subset} of $A$, with $x\in P$ meaning $P(x)$.
  We overload notation by writing $P$ as $\setof{x: A | P(x)}$,
  though a subset is not itself a set.
\end{defn}

The relations on a given set are closed under all the logical operations.
Put differently, the subsets of a set are a sub-Heyting-algebra of the predicates on its underlying type.
We write $U\cap V \defeq \setof{x:A | (x\in U) \land (x\in V)}$ and so on.

\begin{defn}\label{defn:int-func}
  A \textbf{function} between two sets is an operation $f:B^A$ such that
  \[
  \begin{array}{rll}
    &\types_{x: A} & f(x) \in B\\
    (x_1\ieq x_2) & \types_{x_1,x_2: A}& (f(x_1)\ieq f(x_2)).
  \end{array}
  \]
  The \textbf{function set} is defined by
  \begin{align*}
    (A\to B) &\defeq \setof{f:B^A | \forall x_1^A x_2^A. ((x_1\ieq x_2) \to (f(x_1)\ieq f(x_2))) }\\
    (f \ieq  g) &\defeq \forall x^A. (f(x)\ieq g(x)).
  \end{align*}
\end{defn}

\noindent
Note the notation: the operation type is $B^A$, the function set is $A\to B$.

\begin{eg}\label{eg:powerset}
  We can regard a predicate on $A$ as an operation $P:\Omega^A$, and we have
  $(x\ieq y) \land P(x) \types_{x,y: A} P(y)$ if and only if
  $
  (x\ieq y) \types_{x,y: A} (P(x) \iff P(y))$.
  Thus, a relation on $A$ is the same as a function from $A$ to the set $\Omega$ (\cref{eg:set-omega}), so we can define the \textbf{power set} of $A$ as $\P A \defeq (A\to\Omega)$.
  Its induced equality relation is
\(
    (U \ieq  V) \defeq \forall x^A. (x\in U \iff x\in V).
\)
\end{eg}

One final remark concerns the following alternative definition of ``function''.

\begin{defn}\label{defn:anafun}
  For sets $A,B$, an \textbf{anafunction}\footnote{This term is inspired by the ``anafunctors'' of~\cite{makkai:avoiding-choice}.} is a relation $F$ on $A\times B$ that is total and functional, i.e.\ such that
  \[\begin{array}{rll}
    &\types_{x: A}& \exists y^B. F(x,y)\\
    F(x,y_1) \land F(x,y_2) &\types_{x: A, y_1:B,y_2: B} & (y_1\ieq y_2).
  \end{array}\]
\end{defn}

If $f:A\to B$ is a function, then $(f(x)\ieq y)$ is an anafunction; the principle of \textbf{function comprehension} (a.k.a.\ \textbf{unique choice}) says that every anafunction is of this form.
Function comprehension is not provable in first-order logic, higher-order logic, or logic-enriched type theory, and indeed fails in many triposes.
Nevertheless, constructivists of Bishop's school often assume it implicitly (one can argue for it by positing a closer relationship between ``operations'' and the existential quantifier than is implied by first-order or higher-order logic).

In the absence of function comprehension, it is often preferable to use anafunctions rather than functions.
For instance, this is how one builds the topos represented by a tripos (such as a realizability topos), and in particular how one recovers the correct internal logic of a topos from its tripos of subobjects.

\section{Affine sets and functions}
\label{sec:linear-sets}

We now switch to the affine context, for this section and the rest of the paper, except when discussing the antithesis translation.
In the definition of $\L$-sets we find our first additive/multiplicative bifurcation.

\begin{defn}\label{defn:set}
  A \textbf{set} is a type with a predicate $\leq$ on $A\times A$ such that
  \begin{alignat*}{2}
    &\;\types_{x\ocin A}\;&\;& x\leq x\\
    x\leq y &\;\types_{x,y\ocin A}&\;& y\leq x\\
    (x\leq y) \mand (y\leq z) &\;\types_{x,y,z\ocin A}&\;& x\leq z.\\
    \intertext{A set is \textbf{\aandish} if it satisfies the stronger transitivity axiom}
    (x\leq y) \aand (y\leq z) &\;\types_{x,y,z\ocin A}&\;& x\leq z.
  \end{alignat*}
\end{defn}

\begin{eg}\label{eg:triv-Lset}
  As in the intuitionistic case, if our first-order affine logic has Leibniz--Lawvere equality types $\sfeq_A$ (\cref{defn:hyd-eq}), then every type $A$ has a ``minimal'' structure of an $\L$-set, with equality $\sfeq_A$.
  This is less useful than in the intuitionistic case (\cref{eg:triv-set}), however, since by \cref{thm:eq-aff} any such $\L$-set has affirmative equality, while we are often interested in $\L$-sets with non-affirmative equality.
\end{eg}

\begin{notn}\label{notn:ocin}
  Recall that if $P$ is an $\L$-predicate on an $\L$-type $A$, we write $\ocsetof{x:A | P(x)}$ for the comprehension type.
  If $A$ is given as a set, we implicitly give $\ocsetof{x: A | P(x)}$ the same equality predicate. 
\end{notn}

Under the antithesis translation, an $\L$-set is an $\I$-type with \emph{two} binary predicates $(\ieq,\ineq)$ such that
\[
\begin{array}{rll}
  &\types_{x,y: A}& \neg ((x\ieq y)\land (x\ineq y))\\
  &\types_{x: A}& x\ieq x\\
  x\ieq y &\types_{x,y: A} & y\ieq x\\
  x\ineq y &\types_{x,y: A} & y\ineq x\\
  (x\ieq y) \land (y\ieq z) &\types_{x,y,z: A}& x\ieq z\\
  (x\ineq z) \land (y\ieq z) &\types_{x,y,z: A}& x\ineq y\\
  (x\ineq z) \land (x\ieq y) &\types_{x,y,z: A}& y\ineq z.
\end{array}
\]
The axioms involving only $\ieq $ say that $(A,\ieq )$ is an $\I$-set,
and the last two axioms say that $\ineq$ is an $\I$-relation (\cref{defn:rel}) on $A\times A$.
Given this, the first axiom is equivalent to $\types_{x: A}\neg (x\ineq x)$.
Thus we have:

\begin{thm}\label{thm:eq}
  Under the antithesis translation:
  \begin{enumerate}
  \item An $\L$-set is an $\I$-set equipped with an \textbf{inequality relation}: a relation $\ineq$ such that $\neg (x\ineq x)$ and $(x\ineq y) \to (y\ineq x)$ (i.e.\ it is irreflexive and symmetric).\label{item:eq1}
  \item It is \aandish if and only if $\ineq$ is an \textbf{apartness}, i.e.\ $(x\ineq z) \to (x\ineq y) \lor (y\ineq z)$.\label{item:eq2}
  \item Its equality is affirmative if and only if $\ineq$ is \textbf{denial}, $(x\ineq y) \logeq \neg (x\ieq y)$.\label{item:eq3}
  \item Its equality is refutative if and only if $\ineq$ is \textbf{tight}: $\neg (x\ineq y) \logeq (x\ieq y)$.\label{item:eq4}\qed
  \end{enumerate}
\end{thm}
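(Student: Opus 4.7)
The plan is to unfold the standard interpretation systematically, translating each axiom in \cref{defn:set} into its proof/refutation components and matching them against the conditions in the theorem. Since the existence predicate $\exis$ is required to be affirmative, in the standard interpretation it is just an ordinary $\I$-predicate, so an $\L$-pre-set is the same as an $\I$-pre-set. A linear predicate on $A\times A$ is, in the standard interpretation, a pair of $\I$-relations $(\ieq, \ineq)$ on the underlying type satisfying $(x\ieq y)\land (x\ineq y)\types \zero$ (from the Chu condition $\pf P\land\rf P=\zero$). The burden of the proof is then to identify reflexivity, symmetry, and transitivity of $\leq$ with the appropriate conditions on the pair $(\ieq,\ineq)$.

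First I would handle~\ref{item:eq1}. Reflexivity $\types_{x\ocin A} x\leq x$ translates, using affirmativity of $\exis x$, into two halves: the proof direction $\exis x \types x\ieq x$ (reflexivity of $\ieq$) and the refutation direction $(x\ineq x) \types \neg\exis x$, which together with the disjointness condition is simply equivalent (given reflexivity) to $\exis x \land (x\ineq x)\types \zero$, i.e.\ irreflexivity of $\ineq$ relative to membership in $A$. Symmetry of $\leq$ splits into symmetry of $\ieq$ and symmetry of $\ineq$, since $\ntp{-}$ interchanges the two components coordinatewise. The key calculation is the transitivity axiom, where $(x\leq y)\mand (y\leq z)$ unfolds via the Chu formula to have proof $(x\ieq y)\land (y\ieq z)$ and refutation $((x\ieq y)\to (y\ineq z))\land ((y\ieq z)\to (x\ineq y))$. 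Thus the proof half gives transitivity of $\ieq$, while the refutation half gives exactly the two extensionality conditions $(x\ineq z)\land(x\ieq y)\types (y\ineq z)$ and $(x\ineq z)\land(y\ieq z)\types (x\ineq y)$, which together with symmetry say that $\ineq$ is an $\I$-relation on $A\times A$ in the sense of \cref{defn:rel}.

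For~\ref{item:eq2}, only the transitivity axiom differs: under $\aand$ rather than $\mand$, the refutation of $(x\leq y)\aand (y\leq z)$ becomes $(x\ineq y)\lor (y\ineq z)$ (the coordinatewise join), so the refutation half of strong transitivity reads $(x\ineq z)\types (x\ineq y)\lor (y\ineq z)$, precisely the cotransitivity/apartness axiom. (The proof half is unchanged, and the two extensionality conditions already follow from apartness together with irreflexivity.)

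Finally~\ref{item:eq3} and~\ref{item:eq4} are direct computations using the formulas $\oc P = (\pf P, \neg \pf P)$ and $\wn P = (\neg \rf P, \rf P)$ from the proof that $\chuh$ admits a Seely comonad: the equality $\leq$ being affirmative means $(x\ineq y) \logeq \neg (x\ieq y)$, which is the denial condition, while being refutative means $(x\ieq y)\logeq \neg (x\ineq y)$, which is tightness. The main obstacle throughout is purely clerical, namely keeping straight which direction of each linear entailment corresponds to which condition on $\ieq$ versus $\ineq$; the transitivity case in~\ref{item:eq1} is where something substantive happens, because this is where the refutation clause of $\mand$ produces the two extensionality conditions that would otherwise look ad hoc in the classical translation.
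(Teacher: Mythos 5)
Your proposal is correct and follows essentially the same route as the paper: the paper's proof is precisely the unfolding, displayed just before the theorem statement, of the axioms of \cref{defn:set} into the seven $\I$-conditions on the pair $(\ieq,\ineq)$ (disjointness, reflexivity, the two symmetries, transitivity of $\ieq$, and the two extensionality conditions coming from the refutation half of $\mand$-transitivity), followed by the observation that disjointness reduces to irreflexivity, with~\ref{item:eq2}--\ref{item:eq4} read off from the $\aand$ refutation clause and the formulas for $\oc$ and $\wn$ exactly as you do. The only nitpick is that in~\ref{item:eq2} the extensionality conditions are recovered from apartness via disjointness rather than irreflexivity as such, but since a \aandish set is in particular a set this is immaterial.
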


\begin{eg}\label{eg:linear-set-prod}\label{defn:prod-apart}
  If $A$ and $B$ are sets, their \textbf{cartesian product set} is the cartesian product type $A\times B$ with
  \[((x_1,y_1)\leq (x_2,y_2))\defeq (x_1\leq x_2)\aand (y_1\leq y_2).\]
  Under the antithesis translation, this yields the cartesian product of $\I$-sets with the disjunctive \textbf{product inequality} (or \textbf{product apartness}):
  \[ ((x_1,y_1)\ineq (x_2,y_2)) \defeq (x_1\ineq x_2) \lor (y_1\ineq y_2).\]
\end{eg}

\begin{eg}\label{thm:linear-set-tens}
  The \textbf{tensor product set} $A\ltens B$ has the same underlying type, but with equalities combined multiplicatively:
  \[((x_1,y_1)\lmeq (x_2,y_2))\defeq (x_1\leq x_2)\mand (y_1\leq y_2).\]
  In the antithesis translation, thus yields the weaker inequality
  \[ ((x_1,y_1)\imneq (x_2,y_2)) \defeq ((x_1\leq x_2)\to (y_1\ineq y_2)) \land ((y_1\leq y_2) \to (x_1\ineq x_2)). \]
\end{eg}

If $A$ and $B$ have affirmative equality, so does $A\ltens B$, but $A\times B$ need not.
If $A$ and $B$ have \aandish or refutative equality, so does $A\times B$, but $A\ltens B$ need not.

\begin{eg}\label{eg:lin-set-omega}
  The type $\Omega$ is a set with
  \begin{equation}
    (P \leq  Q) \defeq (P \liff Q) \defeq (P \imp Q) \aand (Q\imp P).\label{eq:omega-eq}
  \end{equation}
  In the antithesis translation, this yields
  \begin{align*}
    (P\ieq Q) &\logeq (\pf P \iff \pf Q) \land (\rf P \iff \rf Q).\\
    (P\ineq Q) &\logeq (\pf P \land \rf Q) \lor (\rf P \land \pf Q).
  \end{align*}
  We could also use $\mand$ in~\eqref{eq:omega-eq}, but using $\aand$ yields a more useful $\ineq$ and has better formal properties (see \cref{eg:linear-prel,sec:posets}).
  In neither case is the equality \aandish, nor is it affirmative nor refutative even if $P$ and $Q$ are both one or the other.
\end{eg}

\begin{rmk}\label{rmk:nat-strong}
  The notion of ``strong set'' is quite natural under the antithesis translation, since apartness relations are well-studied in intuitionistic constructive mathematics.
  However, to a reader familiar with linear logic (and particularly with linear proof theory), the $\aand$-transitivity of a strong set may seem unreasonably strong.
  An assumption of $(x\leq y)\aand (y\leq z)$ means that we can choose to use either $x\leq y$ \emph{or} $y\leq z$ but not both, so how could we ever hope to prove $x\leq z$?

  In fact, however, there are many sets that \emph{can} be proven to be strong inside affine logic.
  The key is that we don't have to \emph{start} by deciding which of $x\leq y$ and $y\leq z$ to use: we can decompose $x$, $y$, and $z$ and use the definition of $\leq$ to make case distinctions, and then make different choices of $x\leq y$ and $y\leq z$ in different cases.

  A paradigmatic example is the natural numbers $\dN$, for which we define equality recursively in the usual way:
  \begin{alignat*}{2}
    (0\leq 0)&\defeq \top &\qquad
    (0\leq y+1) &\defeq \bot\\
    (x+1\leq y+1) &\defeq (x=y) &\qquad
    (x+1\leq 0) &\defeq \bot.
  \end{alignat*}
  We prove $(x\leq y)\aand (y\leq z) \types_{x,y,z:\dN} (x\leq z)$ by induction on $x,y,z$.
  The case when $x$ and $z$ are both $0$ is trivial.
  If $x$ is $0$ but $z$ is a successor, then either $y$ is $0$, in which case we can use $y\leq z$ to get a contradiction, or $y$ is a successor, in which we can use $x\leq y$ to get a contradiction.
  The case when $x$ is a successor and $z$ is $0$ is symmetric.
  Finally, if $x$ is $x'+1$ and $z$ is $z'+1$, then if $y$ is $0$ we can use either $x\leq y$ or $y\leq z$ to get a contradiction, while if $y$ is a successor $y'+1$ then our goal reduces to the inductive hypothesis $(x'\leq y')\aand (y'\leq z') \types_{x',y',z':\dN} (x'\leq z')$.
\end{rmk}

We now move on to discuss $\L$-relations and subsets.

\begin{defn}\label{defn:lin-rel}
  A \textbf{relation} on a set $A$ is a predicate $P$ such that
  \[ (x\leq y) \mand P(x) \types_{x,y\ocin A} P(y). \]
  A relation is \textbf{\aandish} if
  \[ (x\leq y) \aand P(x) \types_{x,y\ocin A} P(y). \]
  We also refer to a relation as a \textbf{subset}, writing $x\lin P$ instead of $P(x)$, and $\lsetof{x: A|P(x)}$ for $P$ itself.
  (Unlike in the intuitionistic case, we distinguish this notationally from a comprehension $\ocsetof{x:A|P(x)}$, since the latter discards refutational information.)
\end{defn}

\begin{thm}\label{thm:rel}
  Let $U$ be an $\L$-subset of an $\L$-set $A$.
  In the antithesis translation:
  \begin{enumerate}
  \item $U$ is a \textbf{complemented subset} as in~\cite[Chapter 3, Definition (2.2)]{bb:constr-analysis}: a pair of $\I$-subsets $(U,\cancel U)$ of $A$ such that\label{item:rel1}
    \[ (x\in U) \land (y\in \cancel U) \types_{x,y: A} (x\ineq y) .\]
  \item It is \aandish if and only if $\cancel U$ is \textbf{strongly extensional} (also called \textbf{$\ineq$-open}):\label{item:rel2}
    \[
    \begin{array}{rll}
      (y\in \cancel U) &\types_{x,y: A}& (x\ineq y) \lor (x\in\cancel U).
    \end{array}
    \]
  \end{enumerate}
\end{thm}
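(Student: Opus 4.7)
The plan is to unpack both conditions using the formulas for $\mand$ and $\aand$ from the Chu construction (\cref{sec:chu}), interpret the entailment $\types_{x,y\ocin A}$ via \cref{rmk:affirm-sets}, and read off exactly the two clauses in the statement.

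First, under the standard interpretation, an $\L$-predicate on $A$ corresponds to a pair $(\pf P(x),\rf P(x))$ of $\I$-predicates (and I may as well relabel these as $(x\in U, x\in\cancel U)$). Using the Chu formula
\[ (x\leq y)\mand P(x) \;=\; \bigl(\,(x\ieq y)\land (x\in U)\,,\;((x\ieq y)\to (x\in \cancel U))\land ((x\in U)\to (x\ineq y))\,\bigr), \]
and the fact that $\exis$ is affirmative (so $\types_{x,y\ocin A}$ reduces to separate implications on the $\pf{-}$ and $\rf{-}$ components), the condition $(x\leq y)\mand P(x)\types_{x,y\ocin A} P(y)$ becomes the conjunction of
\[ (x\ieq y)\land (x\in U)\;\types_{x,y\in A}\;(y\in U) \]
and
\[ (y\in \cancel U)\;\types_{x,y\in A}\;\bigl((x\ieq y)\to (x\in\cancel U)\bigr)\land \bigl((x\in U)\to (x\ineq y)\bigr). \]
The first entailment is extensionality of $U$ with respect to $\ieq$; the second splits into extensionality of $\cancel U$ and the compatibility clause $(x\in U)\land (y\in\cancel U)\types (x\ineq y)$. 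Thus $U$ and $\cancel U$ are $\I$-subsets in the sense of \cref{defn:rel}, and together they form precisely a complemented subset, proving~\ref{item:rel1}.

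For~\ref{item:rel2}, we repeat the analysis with $\aand$ in place of $\mand$. Using $(x\leq y)\aand P(x) = ((x\ieq y)\land(x\in U),\;(x\ineq y)\lor(x\in\cancel U))$, the $\pf{-}$ component gives the same extensionality of $U$, while the $\rf{-}$ component is now
\[ (y\in \cancel U)\;\types_{x,y\in A}\;(x\ineq y)\lor (x\in \cancel U), \]
which is exactly the strongly extensional (or $\ineq$-open) condition. Conversely, this condition implies the three multiplicative clauses above: strong extensionality specialized at $x\ieq y$ recovers extensionality of $\cancel U$ (since $(x\ieq y)\land (x\ineq y)$ is absurd), and at $x\in U$ it recovers the compatibility clause. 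This gives the ``if and only if''.

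The only delicate point is the bookkeeping step where we use affirmativity of $\exis$ to reduce entailment between Chu pairs into separate forward and contrapositive $\I$-entailments; everything else is formula-chasing with the definitions from \cref{sec:chu,sec:linear-sets} and \cref{thm:eq}.
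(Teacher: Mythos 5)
Your proposal is correct and follows essentially the same route as the paper's proof: unpack $(x\leq y)\mand(x\lin U)\types(y\lin U)$ via the Chu formula for $\mand$ into the three $\I$-entailments (extensionality of $U$, extensionality of $\cancel U$, and strong disjointness), and observe that the $\aand$ version replaces the latter two by the single $\ineq$-openness clause. The paper states the equivalence in~(ii) more tersely as ``exactly the contrapositive information,'' whereas you spell out the converse implication explicitly; that is a harmless elaboration, not a different argument.
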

\begin{proof}
  The subset condition $(x\leq y) \mand (x\lin U) \types_{x,y\ocin A} (y\lin U)$ becomes
  \[
  \begin{array}{rll}
    (x\ieq y) \land (x\in U) &\types_{x,y: A}& (y\in U)\\
    (x\ieq y) \land (y\in \cancel U) &\types_{x,y: A}& (x\in \cancel U)\\
    (x\in U) \land (y\in \cancel U) &\types_{x,y: A}& (x\ineq y).
  \end{array}
  \]
  The first two say that $U$ and $\cancel U$ are $\I$-subsets, and the last is the ``strong disjointness'' condition in~\ref{item:rel1}.
  The ``strong extensionality'' condition in~\ref{item:rel2} is exactly the contrapositive information arising from the \aandish subset condition.
\end{proof}

\begin{defn}\label{defn:func}
  A \textbf{function} between two sets is an operation $f:B^A$ such that
  \[\begin{array}{rll}
    (x_1\leq x_2) & \types_{x_1,x_2\ocin A}& (f(x_1)\leq f(x_2)).
  \end{array}\]
  The \textbf{function set} is defined by
  \begin{align*}
    (A\to B) &\defeq \ocsetof{f:B^A | \fa x_1^A x_2^A. ((x_1\leq x_2) \imp (f(x_1)\leq f(x_2))) }\\
    (f \leq  g) &\defeq \fa x^A. (f(x)\leq g(x)).
  \end{align*}
\end{defn}

\begin{thm}\label{thm:func}
  In the antithesis translation, an $\L$-function $f:A\to B$ is an $\I$-function that is \textbf{strongly extensional}, i.e.\ $(f(x_1)\neq f(x_2)) \types_{x_1,x_2: A} (x_1\neq x_2)$.
  The inequality on $A\to B$ is
  \( (f\neq g) \logeq \exists x^A. (f(x)\neq g(x))\).\qed
\end{thm}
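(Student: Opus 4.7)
The plan is to unfold the definition of an $\L$-function given in Definition~\ref{defn:func} and translate each clause through the standard interpretation, using Theorem~\ref{thm:eq} to identify the resulting pair of $\I$-data on $A$ and $B$ as an $\I$-equality plus inequality. Since the two conditions defining $f\ocin(A\to B)$ are (i) $\exis x \imp \exis{f(x)}$ and (ii) $(x_1\leq x_2)\imp (f(x_1)\leq f(x_2))$, and both are built entirely from affirmative atoms via $\imp$ and $\fa$, the heart of the argument is a calculation of the positive and refutative content of such implications.

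First I would record the general formulae from Section~\ref{sec:chu}: $\pf{P\imp Q}=(\pf P\to\pf Q)\land(\rf Q\to\rf P)$ and $\rf{P\imp Q}=\pf P\land\rf Q$, and combine them with the rules $\pf{\fa x.P(x)}=\forall x.\pf{P(x)}$ and $\rf{\fa x.P(x)}=\exists x.\rf{P(x)}$ from the end of Section~\ref{sec:types}. The key simplification is that when $P$ is affirmative, $\rf P = \neg\pf P$, so $\rf Q \to \rf P$ is just the contrapositive of $\pf P\to \pf Q$ and is absorbed into it; hence $\pf{P\imp Q}\logeq(\pf P\to \pf Q)$ whenever $P$ is affirmative. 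Applied to clause (i), this yields $\forall x.(\exis x\to \exis{f(x)})$, namely that $f$ preserves the underlying pre-set. Applied to clause (ii), $\pf{(x_1\leq x_2)\imp (f(x_1)\leq f(x_2))}$ expands to
\[((x_1\ieq x_2)\to(f(x_1)\ieq f(x_2)))\land((f(x_1)\ineq f(x_2))\to(x_1\ineq x_2)),\]
which is exactly the conjunction of ``$f$ is an $\I$-function'' and strong extensionality. The universal quantifiers and relativisations to $\exis{x_1},\exis{x_2}$ go through routinely, giving the claimed description of $\pf{\exis f}$; since both clauses are affirmative (implications always have trivial refutations after passage through $\oc$ in the existence predicate), this fully determines $\exis f$.

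For the inequality, I would compute $\pf{f\leq g}$ and $\rf{f\leq g}$ directly from the definition $(f\leq g)\defeq \fa x^A.(f(x)\leq g(x))$. Using the rules for $\fa$ and $\imp$ above, and again that $\exis x$ is affirmative so $\rf{\exis x\imp (f(x)\leq g(x))}=\pf{\exis x}\land\rf{f(x)\leq g(x)}=(x\in A)\land(f(x)\ineq g(x))$, I get
\[\rf{f\leq g}\logeq\exists x.((x\in A)\land(f(x)\ineq g(x)))\logeq\exists x^A.(f(x)\ineq g(x)),\]
which is the asserted formula $(f\neq g)\logeq\exists x^A.(f(x)\neq g(x))$; the positive half $(f\ieq g)\logeq\forall x^A.(f(x)\ieq g(x))$ is the analogous computation and matches the equality predicate on the $\I$-function set from Definition~\ref{defn:int-func}.

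I do not anticipate a serious obstacle: the entire proof is bookkeeping in the Chu formulae. The one spot to be careful about is the use of affirmativity of $\exis x$ to collapse the contrapositive half of $\pf{P\imp Q}$, without which the clause (ii) translation would acquire an extraneous third conjunct; this is exactly the phenomenon already flagged in Remark~\ref{rmk:affirm-sets}, so invoking it explicitly should suffice.
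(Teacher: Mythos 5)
Your proposal is correct and is precisely the routine unfolding that the paper omits (the theorem is stated with an immediate \qed): translating the two clauses of Definition~\ref{defn:func} via the Chu formulae for $\imp$ and $\fa$, using affirmativity of $\exis$ as in Remark~\ref{rmk:affirm-sets} to suppress the spurious contrapositive-into-nonexistence clause, exactly as the paper does explicitly in the analogous proof of Theorem~\ref{thm:rel}. The computation of the refutation part of $\fa x^A.(f(x)\leq g(x))$ giving $\exists x^A.(f(x)\ineq g(x))$ is also right.
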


\begin{eg}\label{eg:linear-prel}
  We have $(x\leq y)\mand P(x) \vdash P(y)$ iff $(x\leq y) \vdash (P(x) \imp P(y))$, and symmetry of $\leq $ then implies $(x\leq y) \vdash (P(x) \imp P(y)) \aand (P(y) \imp P(x))$, i.e.\ $(x\leq y) \types (P(x) \liff P(y))$.
  Therefore, relations on $A$ are the same as functions from $A$ to the set $\Omega$ from \cref{eg:lin-set-omega}.
  (Note that this requires the $\aand$ in~\eqref{eq:omega-eq}.)
  Thus we can define the \textbf{power set} of $A$ to be $\P A \defeq (A\to \Omega)$.
  Its induced equality is
  \[ (U \leq  V) \defeq \fa x^A. ((x\lin U) \liff (x\lin V)). \]
  In the antithesis translation, we have
  \[ (U\ineq V) \defeq \exists x^A. ((x\in U \land x\in \cancel V) \lor (x\in\cancel U \land x\in V)). \]
\end{eg}

\begin{eg}\label{eg:twovar-strext}
  In the antithesis translation, an $\L$-function $f:A\times B\to C$ must be strongly extensional for the disjunctive product inequality, $(f(x_1,y_1)\ineq f(x_2,y_2)) \types (x_1\ineq x_2) \lor (y_1\ineq y_2)$.
  By contrast,
  an $\L$-function $f:A\ltens B\to C$ need only be strongly extensional in each variable separately: $(f(x,y_1) \ineq f(x,y_2)) \types (y_1\ineq y_2)$ and $(f(x_1,y) \ineq f(x_2,y)) \types (x_1\ineq x_2)$.
  Both are useful notions; see \cref{eg:lpo}.
\end{eg}

\begin{eg}
  In particular, functions from $B$ to $\P A = (A\to\Omega)$ classify subsets not of $A\times B$, but of $A\ltens B$.
  In the antithesis translation, an $\L$-subset of $A\ltens B$ is a pair of $\I$-subsets $U,\cancel U\subseteq A\times B$ such that
  \begin{alignat*}{2}
    ((x,y_1)\in U) \land ((x,y_2) \in \cancel U) &\types&\;& (y_1\ineq y_2)\\
    ((x_1,y)\in U) \land ((x_2,y) \in \cancel U) &\types&\;& (x_1\ineq x_2)\\
    \intertext{whereas an $\L$-subset of $A\times B$ satisfies the stronger condition}
    ((x_1,y_1)\in U) \land ((x_2,y_2) \in \cancel U) &\types&\;& (x_1\ineq x_2) \lor (y_1\ineq y_2).
  \end{alignat*}
\end{eg}

The $\L$-relations on an $\L$-set are closed under the additive connectives, as well as linear negation.
This defines the additive operations of set algebra:
\begin{alignat*}{2}
  U \lcap V &= \lsetof{x | (x\lin U) \aand (x\lin V)} &\qquad
  U \lcup V &= \lsetof{x | (x\lin U) \aor (x\lin V)} \\
  \textstyle\lbigcap_i U_i &= \lsetof{x | \fa i. (x\lin U_i) } &\qquad
  \textstyle\lbigcup_i U_i &= \lsetof{x | \ex i. (x\lin U_i) } \\
  \cm U &= \lsetof{x | \smash{\ntp{x\lin U}}} &\qquad
  \lempty &= \lsetof{x | \bot}.
\end{alignat*}
Here the index $i$ in $\lbigcap_i$ and $\lbigcup_i$ belongs to some type $I$, while $U$ is a predicate on $I\times A$ that respects the equality of $A$.
In particular, it might be the case that $I$ is itself an $\L$-set and $U$ is a predicate on $I\times A$ or $I\mand A$.

We write $U\lsub V$ to mean $\fa x^A. ((x\lin U)\imp (x\lin V))$; in the antithesis translation this means that $U\subseteq V$ and $\cancel V \subseteq \cancel U$.
Since $\fa$ commutes with $\aand$, we have $(U\leq V) \logeq ((U\lsub V) \aand (V\lsub U))$.
By duality, $U\lnsub V$ means $\ex x^A. ((x\lin U) \mand (x\lnin V))$.

Like linear negation, the complement of $\L$-subsets is involutive ($\cmcm{U} = U$) but not Boolean: $U\lcup \cm U\qeq A$ and $U\lcap \cm U\qeq \lempty$ both assert that $U$ is decidable.

\begin{lem}
  In the antithesis translation, an $\L$-subset is \textbf{nonempty}, i.e.\ $U\lneq \lempty$, if and only if its affirmative part is $\I$-inhabited, i.e.\ $\exists x^A. (x\in U)$.
\end{lem}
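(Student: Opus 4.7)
The plan is to unfold $U \lneq \lempty$ directly, using the linear-logic definitions, and then compute its positive part under the standard interpretation.

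First I would show that $U \leq \lempty$ reduces to $\fa x^A. \ntp{x\lin U}$. By the identity $(U\leq V) \logeq (U\lsub V)\aand(V\lsub U)$ noted just before the lemma, we split into two inclusions. The inclusion $\lempty \lsub U$ is $\fa x^A.\,(\bot \imp (x\lin U))$, which is trivially valid since $\bot \imp Q \logeq \top$. The inclusion $U \lsub \lempty$ unfolds to $\fa x^A.\,((x\lin U) \imp \bot)$, i.e.\ $\fa x^A.\,\ntp{x\lin U}$. Taking linear negation, and using involutivity of $\ntp{-}$ together with the de Morgan identity $\ntp{\fa x.P(x)} \logeq \ex x.\ntp{P(x)}$, we obtain
\[ U \lneq \lempty \;\logeq\; \ntP{\fa x^A.\,\ntp{x\lin U}} \;\logeq\; \ex x^A.\,(x\lin U). \]

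Second I would compute the positive part of $\ex x^A.(x\lin U)$. Expanding by the abbreviation $\ex x^A. Q(x) \defeq \ex x.(\exis x \mand Q(x))$ and using the Chu formulas $\pf{\ex x. Q(x)} \logeq \exists x.\pf{Q(x)}$ and $\pf{P\mand Q} \logeq \pf P \land \pf Q$, this positive part is $\exists x.\,(\pf{\exis x} \land \pf{U(x)})$. But $\pf{\exis x}$ is exactly the $\I$-existence predicate of $A$, and $\pf{U(x)}$ is the affirmative part of the complemented subset $(U,\cancel U)$, i.e.\ membership in $U$ in the $\I$-sense; so this equals $\exists x^A.(x\in U)$ as in \cref{notn:ipreset}.

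Since a linear proposition is ``true'' under the standard interpretation precisely when its positive part is intuitionistically true, the two statements in the lemma are equivalent. There is no real obstacle here; the only care needed is in handling the affine-style quantifier $\fa x^A$, which is a shorthand for $\fa x.(\exis x \imp -)$, and in remembering that the ``refutation'' side of $\lempty$ contributes nothing because $\bot \imp Q$ is trivially provable. Notice that $\cancel U$ plays no role whatsoever in the characterization, which justifies the informal slogan that nonemptiness depends only on the affirmative part of $U$.
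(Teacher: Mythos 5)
Your proof is correct and follows essentially the same route as the paper's: both unfold the definition of inequality on $\P A$ at $V=\lempty$ and observe that the $\cancel U$ clause drops out because the affirmative part of $\lempty$ is empty. The only difference is presentational---you first establish the purely linear equivalence $U\lneq\lempty\logeq\ex x^A.(x\lin U)$ and then take positive parts, whereas the paper plugs $V=\lempty$ directly into the already-computed standard interpretation of $(U\ineq V)$ from the power-set example.
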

\begin{proof}
  The definition of inequality on $\P A$ gives
  \[ \pf{(U\lneq \lempty)}
  \;\defeq\; \exists x. ((x\in U \land x\in A) \lor (x\in \emptyset \land x\in \cancel U))
  \;\logeq\; \exists x. (x\in U). \qedhere \]
\end{proof}

  Multiplicatives and exponentials do not generally preserve subsets, but they do induce operations on subsets by a reflection or coreflection process:
  \begingroup\allowdisplaybreaks
  \begin{align*}
    U \lmcap V &\defeq \lsetof{x:A | \ex y^A. ((x\leq y) \mand (y\lin U) \mand (y\lin V))}\\
    U \lmcup V &\defeq \lsetof{x:A | \fa y^A. ((x\leq y) \imp ((y\lin U) \mor (y\lin V)))}\\
    \ochat U &\defeq \lsetof{x:A | \ex y^A. ((x\leq y) \mand \oc (y\lin U))}\\
    \wnchk U &\defeq \lsetof{x:A |\fa y^A. ((x\leq y) \imp \wn (y\lin U))}.
  \end{align*}
  \endgroup
  The poset of subsets of $A$ thereby becomes semicartesian and $\ast$-autonomous with a Seely comonad.
  In particular, as a replacement for the false equalities $U\lcup \cm U\qeq A$ and $U\lcap \cm U\qeq \lempty$ we have the true ones $U\lmcup \cm U  = A$ and $U \lmcap \cm U = \lempty$,
  and the $\ochat$-coalgebras form a ``Heyting algebra of affirmative subsets''.
  In the antithesis translation, $\ochat U$ is the affirmative part of $U$ with its \textbf{inequality complement}:
  \begin{alignat*}{2}
    (x\in \ochat U) &\defeq (x\in U) & \qquad
    (x\in \cancel{\ochat U}) &\defeq \forall y^A. ((y\in U) \to (x\ineq y)).
  \end{alignat*}
  Thus an ``affirmative subset'' (i.e.\ $U= \ochat U$) is determined by an ordinary $\I$-subset.

\begin{rmk}\label{rmk:set-hyd}
  If $\ASet$ denotes the category of $\L$-sets and functions\footnote{Strictly speaking we should either quotient these functions by pointwise equality or consider $\ASet$ to be some sort of ``e-category'', but we will not delve into these waters.}, we have constructed a pseudofunctor $\cP : \ASet\op\to \cAff$, which is in fact an affine hyperdoctrine --- although, as suggested in \cref{rmk:aff-quant}, we are generally more interested in quantifiers for the projections $A \mand B \to A$ than $A\times B\to A$.
  This affine hyperdoctrine over $\ASet$ seems analogous to the ``tripos-to-topos'' construction~\cite{hjp:tripos} in intuitionistic logic, but it differs in two important ways.

  Firstly, it is unclear whether the relations on an $\L$-set $A$ can be recovered from the category $\ASet$ as any sort of ``subobject''.
  \cref{thm:comp-aff} is discouraging in this regard.
  Secondly, it is unclear whether the equality relation on an $\L$-set $A$ admits any characterization in terms of this hyperdoctrine over $\ASet$: it cannot be the Leibniz--Lawvere equality, since by \cref{thm:eq-aff} the latter is affirmative.

  For these reasons, we will continue to work Bishop-style, with $\L$-sets \emph{defined} to be types equipped with an equality predicate.
  However, it seems possible that this affine hyperdoctrine over $\ASet$ might shed some semantic light on the question of affine type dependency (see \cref{rmk:dtt}).
\end{rmk}

Finally, we note that unique existence and ``anafunctions'' (see \cref{defn:anafun}) also behave sensibly.
  Recall that classically we can express ``there is at most one $x$ with $P(x)$'' either as ``for all $x,y$, if $P(x)$ and $P(y)$, then $x=y$'' or ``there do not exist $x,y$ with $x\neq y$ such that $P(x)$ and $P(y)$''.
  Intuitionistically these are no longer equivalent (unless $\ineq$ is tight), and only the former is ``correct''.
  But linearly they are again equivalent:
  \begin{equation*}
    \fa x y. ((P(x) \mand P(y)) \imp (x\leq y))
    \;\logeq\;
    \ntP{\ex x y. ((x\lneq y) \mand P(x) \mand P(y))}.
  \end{equation*}
  In the antithesis translation, these statements yield the ``correct'' intuitionistic version augmented by a strong uniqueness ``if $x\ineq y$ and $P(x)$, then $\cancel P(y)$''.
  An even stronger sort of uniqueness would arise from the \aandish linear condition
  \[ \fa x y. ((P(x) \aand P(y)) \imp (x\leq y)), \]
  which in the antithesis translation yields ``if $x\ineq y$, then either $\cancel P(x)$ or $\cancel P(y)$''.

\begin{defn}\label{thm:lin-anafun}
  For $\L$-sets $A,B$, an \textbf{anafunction} from $A$ to $B$ is a relation $F$ on $A\ltens B$ that is total and functional, i.e.\ such that
  \[\begin{array}{rll}
    &\types_{x\ocin A}& \ex y^B. F(x,y)\\
    F(x,y_1) \mand F(x,y_2) &\types_{x\ocin A, y_1:B ,y_2\ocin B}&  (y_1\leq y_2).
  \end{array}\]
\end{defn}

\begin{thm}\label{thm:anafun}
  In the antithesis translation, an $\L$-anafunction from $A$ to $B$ corresponds to an $\I$-anafunction that is ``strongly extensional'' in the sense that
  \[ F(x_1,y_1) \land F(x_2,y_2) \land (y_1\ineq y_2) \types_{x_1,x_2: A, y_1,y_2: B} (x_1\ineq x_2). \]
\end{thm}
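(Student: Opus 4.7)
The approach is to expand the linear definition of anafunction via the standard interpretation and match each clause against the intuitionistic conditions. By \cref{thm:rel} combined with the multiplicative tensor equality of \cref{thm:linear-set-tens}, a relation $F$ on $A\ltens B$ corresponds to a complemented pair $(F,\cancel F)$ of $\I$-subsets of $A\times B$ satisfying the \emph{multiplicative} strong disjointness
\[ F(x_1,y_1)\land \cancel F(x_2,y_2) \land (x_1\ieq x_2) \types (y_1\ineq y_2), \]
together with the swapped variant hypothesizing $(y_1\ieq y_2)$ and concluding $(x_1\ineq x_2)$.

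For the totality clause $\types_{x\ocin A} \ex y^B. F(x,y)$, the positive part is the usual $\I$-totality $(x\in A) \types \exists y^B. F(x,y)$. Its refutation part demands $(x\in A) \land \forall y^B. \cancel F(x,y) \types \bot$, which is automatic: $\I$-totality produces some $y\in B$ with $F(x,y)$, and then strong disjointness applied with $x_1=x_2$ and $y_1=y_2$ forces $y\ineq y$, contradicting irreflexivity. For the functionality clause $F(x,y_1)\mand F(x,y_2)\types (y_1\leq y_2)$, the positive part is ordinary $\I$-functionality, while the refutation part reads: for $x\in A$ and $y_1,y_2\in B$ with $y_1\ineq y_2$, we have $F(x,y_1)\to \cancel F(x,y_2)$ and symmetrically. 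The advertised strong extensionality now follows by chaining: given $F(x_1,y_1)$, $F(x_2,y_2)$, and $y_1\ineq y_2$, the functionality refutation at $x=x_2$ turns $F(x_2,y_2)$ into $\cancel F(x_2,y_1)$, after which strong disjointness applied to $F(x_1,y_1)$, $\cancel F(x_2,y_1)$, and the trivial $y_1\ieq y_1$ yields $x_1\ineq x_2$.

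For the converse, given a strongly extensional $\I$-anafunction $F$, set $\cancel F(x,y) \defeq \exists y'^B.\, F(x,y')\land (y\ineq y')$. One checks that $\cancel F$ is an $\I$-subset of $A\times B$ using symmetry and the substitution axioms for $\ineq$ from \cref{thm:eq}. The multiplicative strong disjointness unwinds, using $\I$-functionality of $F$ and the inequality axioms, to the assumed strong extensionality: for the first variant, if $F(x_1,y_1)$, $F(x_2,y')$ with $y_2\ineq y'$, and $x_1\ieq x_2$, then $\I$-functionality gives $y_1\ieq y'$, and combining with $y_2\ineq y'$ yields $y_1\ineq y_2$; for the swapped variant one appeals directly to the assumed strong extensionality. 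Functionality's refutation clause follows by taking $y' := y_1$.

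The main obstacle will be the bookkeeping of the multiplicative refutation clauses: one must not conflate the multiplicative strong disjointness arising on $A\ltens B$ with the stronger disjunctive form on $A\times B$ from \cref{eg:linear-set-prod}, and must verify the coordinate-swapped halves of both strong disjointness and the functionality refutation without assuming any tightness or denial property of $\ineq$.
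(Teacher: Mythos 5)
Your proposal is correct and follows essentially the same route as the paper: unfold the standard interpretation of totality, functionality, and the relation condition on $A\ltens B$, observe that the clauses force $\cancel F(x,y) \logeq \exists y'^B.(F(x,y')\land (y\ineq y'))$, and reduce the remaining content to the stated strong extensionality of the underlying $\I$-anafunction. The paper presents this more tersely as a list of seven unravelled axioms with the observation that the second and seventh determine $\cancel F$ and render the first and fifth automatic, while you spell out both directions explicitly; the mathematical content is the same.
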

\begin{proof}
  An $\L$-anafunction consists of two $\I$-relations $F,\cancel F$ on $A\times B$ such that
  \[
  \begin{array}{rll}
    & \types_{x: A, y: B} &\neg (F(x,y) \land \cancel F(x,y))\\
    F(x,y_1) \land \cancel F(x,y_2) & \types_{x: A, y_1:B,y_2: B} & (y_1\ineq y_2)\\
    F(x_1,y) \land \cancel F(x_2,y) &\types_{x_1:A,x_2: A, y: B} & (x_1\ineq x_2)\\
    &\types_{x: A} & \exists y^B. F(x,y)\\
    &\types_{x: A}& \neg \forall y^B. \cancel F(x,y)\\
    F(x,y_1) \land F(x,y_2) &\types_{x: A, y_1:B,y_2: B}&  (y_1\ieq y_2)\\
    F(x,y_1) \land (y_1\ineq y_2) &\types_{x: A, y_1:B,y_2: B} & \cancel F(x,y_2).
  \end{array}
  \]
  The fourth and sixth axioms say that $F$ is an $\I$-anafunction.
  Given this, the second and seventh say $\cancel F(x,y_2) \logeq \exists y_1^B. (F(x,y_1) \land (y_1\ineq y_2))$, which implies the first and fifth, and unravels the third to the claimed strong extensionality property.
\end{proof}

A function is strongly extensional just when its corresponding anafunction is.
Thus, a function comprehension principle is equally sensible affinely as intuitionistic\-ally, and in its absence we can once again work with anafunctions instead.
Moreover, \cref{thm:anafun} implies that function comprehension is preserved by the antithesis construction: if an intuitionistic hyperdoctrine \cP satisfies function comprehension, so does the affine hyperdoctrine $\cP_{\pm}$.

\section{Algebra}
\label{sec:algebra}

Roughly speaking, there are two approaches to intuitionistic constructive algebra.
The first uses apartness only minimally; inequality usually means denial $\neg (x\ieq y)$ and is avoided as much as possible.
For instance, apartness relations are absent from~\cite{johnstone:fields}, and are only rarely used in~\cite{mrr:constr-alg}.
The second approach equips all sets with inequalities (often tight apartnesses),\footnote{Recall that for us, an \emph{inequality relation} is irreflexive and symmetric, an \emph{apartness relation} additionally satisfies $(x\ineq z) \types (x\ineq y) \lor (y\ineq z)$, and is \emph{tight} if $\neg (x\ineq y) \types (x\ieq y)$.
  In~\cite{tvd:constructivism-ii} an ``apartness'' is necessarily tight (otherwise they speak of a ``pre-apartness''), and in~\cite{mrr:constr-alg} it seems that \emph{no} axioms are demanded in general of a relation called ``inequality''.}
and all classical definitions are augmented by ``strong negative'' information such as anti-subgroups and anti-ideals.
This is the tradition of Heyting; see~\cite[Chapter 8]{tvd:constructivism-ii}.

The second approach gives more refined information.
For instance, the 
real numbers are a field in the strong sense that any number \emph{apart} from $0$ is invertible; but without apartness, all we can say is that they are a local ring in which every noninvertible element is zero.
However, carrying apartness relations around is tedious and error-prone, and not every algebraic structure admits a natural apartness:
  \begin{quote}\small
    We could demand that every set come with an inequality, putting inequality on the same footing as equality\dots 
    With such an approach, whenever we construct a set we must put an inequality on it, and we must check that our functions are strongly extensional.
    This is cumbersome and easily forgotten, resulting in incomplete constructions and incorrect proofs.~\cite[p31]{mrr:constr-alg}
  \end{quote}
Moreover, rewriting all of algebra in ``dual'' form looks very unfamiliar to the classical mathematician, and even a constructive mathematician may find it unaesthetic.

The antithesis translation resolves this by automatically handling the ``bookkeeping'' of apartness relations, allowing familiar-looking definitions (written in affine logic) to nevertheless carry the correct constructive meaning (when translated into intuitionistic logic).
It also reveals the above two approaches as ends of a continuum: $\I$-sets with denial inequality are the $\L$-sets with affirmative equality, while $\I$-sets with a (tight) apartness are the $\L$-sets with a (refutative) \aandish equality.
There are also natural examples in between; see \cref{eg:lpo}.

\begin{defn}\label{defn:group}
  A \textbf{group} is an ($\L$-)set $G$ together with an element $e\ocin G$ and functions $m:G\ltens G\to G$ and $i:G\to G$ such that
  \begin{alignat*}{4}
    &\types_{x\ocin G}&\;& m(x,e) \leq x &\qquad
    &\types_{x\ocin G}&\;& m(x,i(x)) \leq e\\
    &\types_{x\ocin G}&\;& m(e,x) \leq x &\qquad
    &\types_{x\ocin G}&\;& m(i(x),x) \leq e\\
    &\types_{x,y,z\ocin G}&\;& m(m(x,y),z) \leq m(x,m(y,z)).
  \end{alignat*}
  A group is \textbf{\aandish} if $m$ is a function on $G\times G$.
\end{defn}

As usual, we write $xy$ and $x^{-1}$ instead of $m(x,y)$ and $i(x)$.

\begin{thm}\label{thm:std-gp}
  In the antithesis translation, an $\L$-group consists of an $\I$-group equipped with an inequality relation such that
  \[\begin{array}{rll}
    x^{-1}\ineq y^{-1} &\types_{x,y: G} & x\ineq y\\
    x u \ineq x v &\types_{x,u,v: G} & u\ineq v\\
    x u \ineq y u &\types_{x,y,u: G} & x\ineq y.
  \end{array}\]
  The extra condition for $G$ to be \aandish is
  \begin{alignat}{2}\label{eq:strong-gp}
    (x u \ineq y v) &\types_{x,y,u,v: G} &\;& (x \ineq y) \lor (u\ineq v)
  \end{alignat}
  which is equivalent to $\ineq$ being an apartness.
  In particular:
  \begin{itemize}
  \item An $\L$-group with affirmative equality is precisely an $\I$-group.
  \item A \aandish $\L$-group with refutative equality
    is precisely a \textbf{group with apartness relation} in the sense of~\cite[8.2.2]{tvd:constructivism-ii}, i.e.\ an $\I$-group with a tight apartness for which the group operations are strongly extensional.
  \item An arbitrary $\L$-group is precisely an $\I$-group with a (symmetric irreflexive) \textbf{translation invariant} inequality as in~\cite[Exercise II.2.5]{mrr:constr-alg}.
    \qed
  \end{itemize}
\end{thm}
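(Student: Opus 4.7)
The plan is to unwind the standard interpretation piece by piece: the underlying $\L$-set, then the operations as $\L$-functions, then the algebraic axioms. By \cref{thm:eq}, the underlying $\L$-set corresponds to an $\I$-set with an irreflexive symmetric inequality $\ineq$; the \aandish case makes $\ineq$ an apartness. The element $e\ocin G$ gives an element $e\in G$ with nothing more to say.

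Next, the strong extensionality of the operations. By \cref{thm:func}, $i:G\to G$ being an $\L$-function yields the condition $x^{-1}\ineq y^{-1}\types x\ineq y$. For $m:G\ltens G\to G$, we apply \cref{eg:twovar-strext} (rather than \cref{thm:func} directly) since the domain is a tensor, not a cartesian product: this gives exactly the two separate-variable strong extensionality axioms $xu\ineq xv\types u\ineq v$ and $xu\ineq yu\types x\ineq y$ in the statement. If instead $G$ is \aandish, then $m$ is a function on the cartesian product $G\times G$, which carries the disjunctive product inequality of \cref{defn:prod-apart}, and \cref{thm:func} then yields~\eqref{eq:strong-gp} in place of the two separate conditions.

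Now the equational axioms. Each axiom has the form $\types_{x\ocin G}\,s\leq t$. Under the standard interpretation (using affirmativity of $\exis$), this splits into an affirmative part $\exis x\types s\ieq t$ and a refutative part $\exis x\mand (s\ineq t)\types\bot$. The former gives exactly the classical $\I$-group axioms; the latter is automatic, because once $s\ieq t$ and $\ineq$ is irreflexive, $s\ineq t$ is absurd in the presence of $\exis x$. So no new information comes from the refutation side of the equations.

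It remains to verify the three itemized cases and, for the \aandish case, to check that~\eqref{eq:strong-gp} is equivalent (in the presence of the other axioms) to $\ineq$ being an apartness. For the affirmative case ($\ineq$ is denial) and the refutative \aandish case ($\ineq$ is tight apartness), the strong extensionality conditions either become vacuous (affirmative, via contraposition of the $\I$-group axioms) or reduce to the standard strong extensionality demanded in~\cite[8.2.2]{tvd:constructivism-ii}; the general case is by definition ``translation invariant inequality'' as in~\cite[Exercise II.2.5]{mrr:constr-alg}. The equivalence of~\eqref{eq:strong-gp} with cotransitivity, given the two separate translation invariance axioms, goes one way by two applications of cotransitivity through the intermediate $yu$ (so $xu\ineq yv$ forces $xu\ineq yu$ or $yu\ineq yv$, hence $x\ineq y$ or $u\ineq v$), and the other way by specializing: from $a\ineq c$, writing $a=m(a,e)$ and $c=m(b,b^{-1}c)$, \eqref{eq:strong-gp} yields $a\ineq b$ or $e\ineq b^{-1}c$, and the latter implies $b\ineq c$ by translation invariance. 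This last equivalence is the main bookkeeping obstacle; everything else is essentially immediate from the general machinery of \cref{sec:linear-sets}.
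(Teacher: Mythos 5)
Your proposal is correct and follows exactly the route the paper intends: the paper states this theorem without a written proof, remarking only that the equivalence of \eqref{eq:strong-gp} with cotransitivity is ``a standard exercise in constructive algebra,'' and your unwinding via \cref{thm:eq}, \cref{thm:func}, and \cref{eg:twovar-strext}, together with the observation that the refutation halves of the equational axioms are vacuous by irreflexivity, is precisely that omitted argument. Your explicit verification of the apartness equivalence is also sound (one small slip: the forward direction needs only \emph{one} application of cotransitivity, inserting $yu$, followed by the two translation-invariance axioms).
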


The fact that~\eqref{eq:strong-gp} is equivalent to $\ineq$ being an apartness is a standard exercise in constructive algebra.
In fact, it can be proven internally in affine logic that an $\L$-group is \aandish if and only if it has \aandish equality.

\begin{defn}
  A \textbf{subgroup} of a group $G$ is a subset $H\lsub G$ such that
  \begin{alignat*}{2}
    &\types &\;& (e\lin H)\\
    (x\lin H) &\types_{x\ocin G} &\;& (x^{-1} \lin H)\\
    (x\lin H) \mand (y\lin H) &\types_{x,y\ocin G} &\;& (xy \lin H).\\
    \intertext{A subgroup is \textbf{\aandish} if it satisfies the stronger condition}
    (x\lin H) \aand (y\lin H) &\types_{x,y\ocin G} &\;& (xy \lin H).
  \end{alignat*}
\end{defn}





\begin{thm}\label{thm:std-subgp}
  In the antithesis translation, a $\L$-subgroup $H$ of $G$ is:
  \begin{enumerate}
  \item An $\I$-subgroup $H$ of the $\I$-subgroup $G$; together with
  \item An $\I$-subset $\cancel H$ of $G$ satisfying the following axioms:\label{item:stdsubgp2}
    \begin{alignat*}{2}
      (x\in H) \land (y\in \cancel H) &\types_{x,y: G} &\;& (x\ineq y)\\
      (x^{-1}\in \cancel H) &\types_{x\in G} &\;& (x\in \cancel H)\\
      (xy\in \cancel H) \land (x\in H) &\types_{x,y: G} &\;& (y\in \cancel H)\\
      (xy\in \cancel H) \land (y\in H) &\types_{x,y: G} &\;& (x\in \cancel H).
    \end{alignat*}
  \end{enumerate}
  Moreover:
  \begin{itemize}
  \item $H$ is \aandish iff the last two axioms are replaced by the following stronger one:
    \[ (xy\in \cancel H) \types_{x,y: G} (x\in \cancel H) \lor (y\in \cancel H). \]
  \item An affirmative $\L$-subgroup of an affirmative $\L$-group is precisely an $\I$-subgroup of an $\I$-group, together with its logical complement $\cancel H \defeq \setof{x\in G | \neg(x\in H)}$.
  \item If $G$ is refutative and \aandish, then $H$ is refutative
and \aandish if and only if $\cancel H$ is an \textbf{antisubgroup compatible with the apartness} in the sense of~\cite[8.2.4]{tvd:constructivism-ii} together with its logical
complement.
\qed
  \end{itemize}
\end{thm}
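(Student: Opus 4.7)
The strategy is to unfold the $\L$-subgroup axioms one at a time through the standard interpretation, using that an $\L$-entailment $P \types Q$ translates to $\pf P \types \pf Q$ paired with $\rf Q \types \rf P$. Writing $(x \lin H)$ with Chu components $(x\in H, x\in\cancel H)$, \cref{thm:rel} already shows that the underlying subset condition $H\lsub G$ supplies $\I$-subsets $H,\cancel H$ of $G$ satisfying the strong disjointness $(x\in H)\land (y\in \cancel H) \types (x\ineq y)$. So the work is to produce the remaining three axioms in~\ref{item:stdsubgp2} from the closure clauses for $e$, $(-)^{-1}$, and $m$.

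The identity axiom $\types (e\lin H)$ has trivial refutation side and so just yields $e\in H$ on the positive side. The inverse axiom $(x\lin H) \types_{x\ocin G}(x^{-1}\lin H)$ has the positive translation $(x\in H) \types (x^{-1}\in H)$ and the contrapositive translation $(x^{-1}\in \cancel H) \types (x\in \cancel H)$, accounting for the second axiom of~\ref{item:stdsubgp2}.

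The crucial step is the multiplication axiom $(x\lin H) \mand (y\lin H) \types (xy\lin H)$, where one must use the Chu formulas $\pfp{P\mand Q} = \pf P \land \pf Q$ and $\rfp{P\mand Q} = (\pf P \to \rf Q)\land (\pf Q \to \rf P)$. The positive side gives the usual $\I$-subgroup closure $(x\in H)\land (y\in H) \types (xy\in H)$, and the refutation side gives $(xy\in \cancel H) \types ((x\in H) \to (y\in \cancel H))\land ((y\in H) \to (x\in \cancel H))$, which is exactly the last two axioms of~\ref{item:stdsubgp2}. Replacing $\mand$ by $\aand$ and using $\rfp{P\aand Q} = \rf P \lor \rf Q$ immediately upgrades this to the disjunctive closure $(xy\in \cancel H)\types (x\in \cancel H)\lor (y\in \cancel H)$, yielding the \aandish case.

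The two remaining special cases follow by combining this translation with the descriptions of affirmative and refutative equality from \cref{thm:eq}: in the affirmative case $\cancel H$ is forced to be $\setof{x\in G | \neg(x\in H)}$, whereupon the axioms of~\ref{item:stdsubgp2} become automatic consequences of the $\I$-subgroup closure of $H$. In the refutative \aandish case, the list of axioms on $\cancel H$ obtained from~\ref{item:stdsubgp2} plus the disjunctive \aandish upgrade is precisely the axiomatization of an antisubgroup compatible with the tight apartness in~\cite[8.2.4]{tvd:constructivism-ii}, while tightness of $\ineq$ on $G$ (refutative case) lets one recover $H$ as the logical complement of $\cancel H$. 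The main obstacle throughout is the correct unfolding of $\rfp{(x\lin H)\mand (y\lin H)}$ in the multiplication axiom; once this calculation is in hand, the rest is bookkeeping.
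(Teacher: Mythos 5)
Your proposal is correct and matches the paper's approach exactly: the theorem carries no written proof in the paper (it is stated as following directly from the definitions), and the intended argument is precisely the componentwise unfolding you perform — \cref{thm:rel} for the subset clause, the proof/refutation split of each closure axiom, and the Chu formulas $\rfp{P\mand Q} = (\pf P\to\rf Q)\land(\pf Q\to\rf P)$ versus $\rfp{P\aand Q}=\rf P\lor\rf Q$ for the multiplication clause. One small attribution slip in your last paragraph: recovering $H$ as the logical complement of $\cancel H$ comes from refutativity of the predicate $x\lin H$ itself (i.e.\ $\pf{(x\lin H)}\logeq\neg\,\rf{(x\lin H)}$), not from tightness of the apartness on $G$; the refutativity of $G$'s equality is needed only so that the ambient apartness is tight, as presumed by the cited definition of compatibility.
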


\begin{defn}
  An $\L$-subgroup $H$
  is \textbf{normal} if $(x\lin H) \types (y x y^{-1} \lin H)$.
\end{defn}

In the antithesis translation, if $H$ and $G$ are affirmative then normality reduces to ordinary normality, whereas if they are \aandish and refutative it reduces to normality for an antisubgroup~\cite[8.2.7]{tvd:constructivism-ii}.

\begin{thm}
  Let $H$ be a normal subgroup of $G$.  Then $(x \leq_H y) \defeq (x y^{-1} \lin H)$ defines a new equality predicate on the underlying type of $G$, and the resulting set is again a group, denoted $G/H$.
\end{thm}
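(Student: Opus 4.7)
The plan is to verify, in order: (1) that $\leq_H$ is an equality on the underlying pre-set of $G$; (2) that $e$, $i$, and $m$ remain $\L$-functions with respect to $\leq_H$; and (3) that the group axioms continue to hold.

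For (1), reflexivity follows from the group identity $x x^{-1} \leq e$ together with $e \lin H$ and the fact that $H$, as a subset, respects the original equality; hence $x x^{-1} \lin H$, i.e.\ $x \leq_H x$. Symmetry uses inverse-closure: from $x y^{-1} \lin H$ we obtain $(x y^{-1})^{-1} \lin H$, and the standard identity $(xy^{-1})^{-1} \leq y x^{-1}$ gives $y \leq_H x$. Transitivity is where the multiplicative structure enters: combining $(x y^{-1} \lin H) \mand (y z^{-1} \lin H)$, the subgroup's multiplicative closure axiom yields $(xy^{-1})(yz^{-1}) \lin H$, which simplifies via associativity and $y^{-1} y \leq e$ to $x z^{-1}$.

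For (2), the inverse preserves $\leq_H$ via normality: from $x y^{-1} \lin H$, symmetry yields $y x^{-1} \lin H$, and normality conjugating by $x^{-1}$ gives $x^{-1}(y x^{-1}) x \lin H$, which equals $x^{-1} y$, so $i(x) \leq_H i(y)$. For $m : G \ltens G \to G$ we need $(x_1 \leq_H x_2) \mand (y_1 \leq_H y_2) \types m(x_1,y_1) \leq_H m(x_2,y_2)$: rewrite $(x_1 y_1)(x_2 y_2)^{-1}$ as $(x_1 x_2^{-1})\bigl(x_2 (y_1 y_2^{-1}) x_2^{-1}\bigr)$, apply normality to $y_1 y_2^{-1} \lin H$ to place the second factor in $H$, and use multiplicative closure with $x_1 x_2^{-1} \lin H$. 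For (3), each group axiom has the form $t \leq s$; since $t \leq s$ implies $t s^{-1} \leq s s^{-1} \leq e \lin H$ (using that right multiplication by $s^{-1}$ is a function with respect to the original equality), we automatically get $t \leq_H s$, so the axioms transport directly.

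The main obstacle is the bookkeeping of $\mand$ versus $\aand$. Both transitivity of $\leq_H$ and the function property of $m$ on $G \ltens G$ combine two memberships in $H$ via $\mand$, exactly matching the \mandish subgroup axiom $(x \lin H) \mand (y \lin H) \types xy \lin H$. Had the multiplication instead been assumed a function on $G \times G$ (the \aandish case), making $G/H$ again \aandish would require the corresponding \aandish closure axiom for $H$; the multiplicative framing of both hypothesis and axiom is precisely what makes the quotient construction go through uniformly.
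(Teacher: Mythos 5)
Your proposal is correct and follows essentially the same route as the paper: the equality axioms come directly from the subgroup closure axioms, and the key computations are the same normality-plus-multiplicative-closure rewriting of $(xw)(yz)^{-1}$ as a product of two elements of $H$, and the conjugation trick for $i$. The extra observations (that the group axioms transport because they are instances of the original equality, and the $\mand$/$\aand$ bookkeeping) are consistent with, though left implicit in, the paper's terser proof.
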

\begin{proof}
  The closure axioms of a subgroup directly imply the axioms of an equality predicate.
  It remains to show that $m$ and $i$ are functions $G/H \ltens G/H \to G/H$ and $G/H \to G/H$.
  For the first, we have
  \begin{alignat*}{2}
    (x\leq_H y) \mand (w\leq_H z)
    &\logeq (x y^{-1} \lin H)\mand (w z^{-1} \lin H)\\
    &\types (x y^{-1} \lin H)\mand (y w z^{-1} y^{-1} \lin H) &&\qquad \text{(by normality)}\\
    &\types (x y^{-1} y w z^{-1} y^{-1} \lin H)\\
    &\logeq ((x w) (y z)^{-1}\lin H)\\
    &\logeq (x w \leq_H y z).
  \end{alignat*}
  Similarly, for the second we have
  \[ (x\leq_H y) \logeq (x y^{-1} \lin H) \types (y^{-1} x y^{-1} y \lin H) \logeq (y^{-1} x \lin H) \logeq (y^{-1} \leq_H x^{-1}).\qedhere \]
\end{proof}


\begin{eg}\label{eg:lpo}
  Let $G = \mathbf{2}^{\dN}$ be the set of infinite binary sequences, with pointwise addition mod 2, and
  \( H \defeq \lsetof{x\in G | \ex m. \fa n. ((m \lle n) \imp (x_n \leq 0))} \).
  Then $G$ is a \aandish group with refutative equality, while $H$ is a normal subgroup that is neither \aandish, affirmative, nor refutative.
  In the quotient $G/H$ we have
  \begin{align*}
    (x\leq 0) &\logeq \ex m. \fa n. ((m\lle n) \imp (x_n \leq 0))\\
    (x\lneq 0) &\logeq \fa m. \ex n. ((m\lle n) \mand (x_n \leq 1)).
  \end{align*}
  That is, $x\leq 0$ if $x$ is eventually $0$, and $x\lneq 0$ if $x$ is $1$ infinitely often.
  Neither of these is the Heyting negation of the other, so $G/H$ is neither affirmative nor refutative.
  Similarly, $G/H$ is not \aandish, so in the antithesis translation its inequality is not an apartness and its multiplication is not strongly extensional \emph{for the disjunctive product inequality}, though it is for the weaker equality on $G/H\mand G/H$.

  In~\cite[p31]{mrr:constr-alg} this example is used to argue that not all sets should have inequalities.
  From our perspective, it shows instead that not all groups should be required to be \aandish.
\end{eg}

A \textbf{(commutative) ring} is an abelian group $(R,+,0)$ with a multiplication function $\cdot :R\ltens R\to R$ and unit $1:R$ satisfying the usual axioms; it is \textbf{\aandish} if both $+$ and $\cdot$ are defined on $R\times R$.
In the antithesis translation:
\begin{itemize}
\item An affirmative $\L$-ring is an ordinary $\I$-ring.
\item A \aandish refutative $\L$-ring is a \textbf{ring with apartness} as in~\cite[8.3.1]{tvd:constructivism-ii} (except that they also assume $0\ineq 1$). 
\item A general $\L$-ring is an $\I$-ring with an inequality such that $(x\ineq y) \iff (x-y \ineq 0)$ and $(xy \ineq 0) \to (y\ineq 0)$.
\end{itemize}
An \textbf{ideal} is an additive subgroup $J$ with $(x\lin J) \types (xy\lin J)$.
In the antithesis translation, in the affirmative case this is an ordinary $\I$-ideal, while in the \aandish refutative case it is an \textbf{anti-ideal}~\cite[8.3.6]{tvd:constructivism-ii}: an additive antisubgroup $\cancel J$ with $(xy\in\cancel J) \to (x\in \cancel J) \land (y\in\cancel J)$.
The quotient $R/J$ of an $\L$-ring by an ideal is straightforward, and its antithesis translation yields the apartness on the quotient of an apartness ring by the complement of an anti-ideal~\cite[8.3.8]{tvd:constructivism-ii}.

\begin{defn}
  Let $J$ be an ideal of the $\L$-ring $R$ that is \textbf{proper}, i.e.\ $1\lnin J$.
  \begin{itemize}
  \item $J$ is \textbf{\aprime} if $(xy\lin J) \types (x\lin J) \aor (y\lin J)$.
  \item $J$ is \textbf{\mprime} if $(xy\lin J) \types (x\lin J) \mor (y\lin J)$.
  \item $R$ is \textbf{\aint} if $(0)$ is proper and \aprime.
  \item $R$ is \textbf{\mint} if $(0)$ is proper and \mprime.
  \end{itemize}
\end{defn}

If $J$ is proper, then $R/J$ is \aint or \mint exactly when $J$ is \aprime or \mprime, respectively.
In the antithesis translation:
\begin{itemize}
\item A \aprime affirmative $\L$-ideal in an affirmative $\L$-ring is a proper $\I$-ideal such that $(xy\in J) \types (x\in J) \lor (y\in J)$.
  An affirmative $\L$-ring is \aint if $\neg(0\ieq 1)$ and $(xy\ieq 0) \types (x\ieq 0)\lor (y\ieq 0)$;
this is~\cite[axiom I1]{johnstone:fields}.
\item Similarly, an affirmative $\L$-ring is \mint if it satisfies $\neg (0\ieq 1)$ and \cite[axiom I2]{johnstone:fields}: $(xy\ieq 0) \land \neg (x\ieq 0) \to (y\ieq 0)$.
\item A \mprime \aandish refutative $\L$-ideal in a \aandish refutative $\L$-ring is an anti-ideal $\cancel J$ in an $\I$-ring with apartness that is proper ($1\in \cancel J$) and such that $(x\in \cancel J) \land (y\in\cancel J) \types (x y \in \cancel J)$, i.e.\ a \textbf{prime anti-ideal} as in~\cite[8.3.10]{tvd:constructivism-ii}.
\item Finally, an arbitrary $\L$-ring is \mint if and only if $1\ineq 0$ and we have $(x\ineq 0) \land (xy \ieq 0) \to (y\ieq 0)$ and also $(x\ineq 0) \land (y\ineq 0) \to (x y \ineq 0)$.
  Combined with the above characterization of $\L$-rings, this is precisely an integral domain in the sense of~\cite[Exercise II.2.7]{mrr:constr-alg}.
\end{itemize}

\begin{defn}
  Let $J$ be a proper ideal of the $\L$-ring $R$.
  \begin{itemize}
  \item $J$ is \textbf{\amax} if $\types_{x:R} (x\lin J) \aor \ex y. (1-xy \lin J)$.
  \item $J$ is \textbf{\mmax} if $\types_{x:R} (x\lin J) \mor \ex y. (1-xy \lin J)$.
  \item $R$ is a \textbf{\afield} if $(0)$ is proper and \amax.
  \item $R$ is a \textbf{\mfield} if $(0)$ is proper and \mmax.
  \end{itemize}
\end{defn}

We write $\inv(x) \defeq \ex y. (x y \leq 1)$ for ``$x$ is invertible''; this is the second disjunct in (either kind of) maximality for $(0)$.
The quotient $R/J$ is a \afield or \mfield if and only if $J$ is \amax or \mmax, respectively.
In the antithesis translation:
\begin{itemize}
\item An affirmative $\L$-ring is a \afield just when its corresponding $\I$-ring satisfies $\neg (0\ieq 1)$ and $(x\ieq 0) \lor \inv(x)$.
  These are called \textbf{discrete fields} (since they necessarily have decidable equality) or \textbf{geometric fields}~\cite[axiom F1]{johnstone:fields}.
\item A general $\L$-ring is a \mfield just when its corresponding $\I$-ring with inequality satisfies $0\ineq 1$ and $(x\ineq 0) \to \inv(x)$.
  This is precisely a \textbf{field} as in~\cite{mrr:constr-alg} with $\ineq$ irreflexive (in~\cite{mrr:constr-alg} the zero ring is a ``field'' with $0\ineq 0$).
\item A \mfield has \aandish refutative equality just when its $\I$-ring has a tight apartness; these are the \textbf{Heyting fields} of~\cite{mrr:constr-alg} and the \textbf{fields} of~\cite{tvd:constructivism-ii}.
\item \Aandish refutative \mmax $\L$-ideals are the \textbf{minimal anti-ideals} of~\cite{tvd:constructivism-ii}.
\item Finally, the \emph{affirmative} $\L$-rings that are \mfield{}s are the $\I$-rings satisfying $\neg (1\ieq 0)$ and $\neg(x\ieq 0) \to \inv(x)$, which is~\cite[axiom F2]{johnstone:fields}.
\end{itemize}

\begin{rmk}
  The name ``geometric field'' arises because such fields are the models of a geometric theory.
  However, antithesis translations of \mfield{s} are also a geometric theory if we include the inequality $\ineq$ as part of the theory.
  The apartness axiom for $\ineq$ is also geometric; only the tightness axiom $\neg(x\ineq y) \types (x\ieq y)$ fails to be so.

  In fact, writing a classical definition in affine logic and passing across the antithesis translation often (though not always) produces a geometric theory.
  It is a sort of refinement of the ``Morleyization'' (see e.g.~\cite[D1.5.13]{ptj:elephant2}).
\end{rmk}

\section{Order}
\label{sec:posets}

When equality is a defined relation, we can either introduce order and topology as structures on a type which induce an equality, or as structures on a set that might determine the equality by a ``separation'' axiom.
We prefer the former.

\begin{defn}
  A \textbf{preorder} on an $\L$-type $A$ is a predicate $\lle$ on $A\times A$ with
  \begin{mathpar}
    \types_{x\ocin A} (x\lle x)\and
    (x\lle y) \mand (y\lle z) \types_{x,y,z\ocin A} (x\lle z).
  \end{mathpar}
  \begin{itemize}
  \item A preorder is \textbf{\aandish} if $(x\lle y) \aand (y\lle z) \types_{x,y,z\ocin A} (x\lle z)$.
  \item A \textbf{\mtotal order} is a preorder such that $\types_{x,y\ocin A} (x\lle y) \aor (y\lle x)$.
  \item A \textbf{\atotal order} is a preorder such that $\types_{x,y\ocin A} (x\lle y) \mor (y\lle x)$.
  \end{itemize}
\end{defn}

If $A$ has a preorder, then $(x\leq y) \defeq (x\lle y) \aand (y\lle x)$ makes $A$ into a set, and $\lle$ is then a relation defined on $A\ltens A$.
The sets-with-preorder we obtain in this way are exactly the \textbf{partial orders}: sets with a preorder such that $\lle$ is a relation on $A\ltens A$ and is \textbf{$\aand$-antisymmetric}, i.e.\ $(x\lle y) \aand (y\lle x) \types_{x,y\ocin A} (x\leq y)$.

\begin{eg}
  The equality on $\Omega$ from \cref{eg:lin-set-omega} is induced in this way from the natural preorder $(P\lle Q) \defeq(P \imp Q)$.
\end{eg}

In the antithesis translation, an $\L$-partial-order contains two relations $\le$ and $\nle$; but
it is often more suggestive 
to write $x<y$ instead of $y\nle x$.

\begin{thm}\label{thm:preord}
  In the antithesis translation, a partial order on an $\L$-set $A$ consists of two $\I$-relations $\le$ and $<$ such that
\allowdisplaybreaks
  \begin{alignat*}{2}
    &\types_{x: A} &\;&(x\le x)\\
    (x\le y)\land (y\le z) &\types_{x,y,z: A} &\;&(x\le z)\\
    (x\le y) \land (y\le x) &\types_{x,y: A}&\;& (x\ieq y)\\
    (x<y)\land  (y\le z) &\types_{x,y,z: A}&\;& (x<z)\\
    (x\le y) \land (y<z)  &\types_{x,y,z: A}&\;& (x<z)\\
    (x<y) &\types_{x,y: A}&\;& (x\ineq y)\\
    (x\ineq y) &\types_{x,y: A}&\;& ((x<y) \lor (y<x)).
  \end{alignat*}
  That is, $\le$ is an $\I$-partial-order, $<$ is a ``bimodule'' over it, and for $x,y: A$ we have $(x\ineq y) \logeq ((x<y) \lor (y<x))$.
  Moreover, the $\L$-partial-order is\dots
  \begin{itemize}
  \item \dots \aandish if and only if $<$ is cotransitive: $(x<z) \types (x<y) \lor(y<z)$.
  \item \dots \mtotal if and only if $(x<y)\to (x\le y)$ (hence $<$ is transitive).
  \item \dots \atotal if and only if
    $\le$ is total, $(x\le y)\lor (y\le x)$.\qed
  \end{itemize}
\end{thm}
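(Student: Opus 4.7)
The plan is to unpack each component of the $\L$-partial-order structure via the standard interpretation of the linear connectives. By Remark~\ref{rmk:affirm-sets}, an $\L$-entailment $P \types_{x\ocin A} Q$ decomposes into $\pf P \types_{x\in A} \pf Q$ together with $\rf Q \types_{x\in A} \rf P$; writing $\le \defeq \pf\lle$ and, as suggested in the text, $(x<y) \defeq \rf{(y\lle x)}$, the predicate $\lle$ is represented by the pair with proof $(x\le y)$ and refutation $(y<x)$. A preliminary computation: $\aand$-antisymmetry of $\lle$ combined with reflexivity and the ``relation on $A\ltens A$'' condition forces $(x\leq y) \logeq (x\lle y)\aand(y\lle x)$, whence in the standard interpretation $(x\ineq y) \logeq (x<y)\lor(y<x)$, which furnishes both axioms 6 and 7 of the theorem in one stroke.

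Next I would dispatch the remaining preorder axioms one by one. Reflexivity of $\lle$ yields the $\I$-reflexivity $(x\le x)$ on the proof side and irreflexivity of $<$ on the refutation side (the latter is in any case already a consequence of axiom~6 together with irreflexivity of the $\L$-set inequality $\ineq$). Transitivity $(x\lle y)\mand(y\lle z)\types(x\lle z)$ yields $\I$-transitivity of $\le$ on proofs, while the refutation formula $\rf{(P\mand Q)} = (\pf P \to \rf Q)\land(\pf Q \to \rf P)$ expands the contrapositive into the two bimodule laws $(z<x)\land(x\le y)\to(z<y)$ and $(z<x)\land(y\le z)\to(y<x)$, i.e.\ axioms~4 and~5. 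The $\aand$-antisymmetry axiom then yields $\I$-antisymmetry of $\le$ (axiom~3) on the proof side and merely restates the biconditional above on the refutation side. For the converse direction, given $\I$-relations $\le$ and $<$ satisfying axioms~1--7, one assembles $\lle$ from the pair $(\le, >)$; the only nontrivial verification is the refutation side of the ``relation on $A\ltens A$'' condition, which after unpacking reduces to $\ieq$-congruence of $\le$ and $<$ (automatic, since both are $\I$-relations) together with two mixed implications that follow directly from axioms~4, 5, and~6.

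The three strictness equivalences are handled the same way. For \aandish, the refutation $\rf{(P\aand Q)} = \rf P \lor \rf Q$ converts the strengthened transitivity into $(z<x)\to(y<x)\lor(z<y)$, i.e.\ cotransitivity of $<$. For \mtotal, the formula $\pf{(P\mor Q)} = (\rf P \to \pf Q)\land(\rf Q \to \pf P)$ makes the proof side of $(x\lle y)\mor(y\lle x)$ equivalent to $(x<y)\to(x\le y)$ (the two symmetric conjuncts being interderivable); the refutation side $(y<x)\land(x<y)$ is already absurd modulo axioms~3 and~6 plus irreflexivity of $\ineq$, and transitivity of $<$ then falls out of the bimodule axiom~5 applied to $(x<y)\land(y<z)$ after replacing the first by $x\le y$. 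For \atotal, $\pf{(P\aor Q)} = \pf P \lor \pf Q$ delivers totality of $\le$ directly, and the refutation coincides with that of the \mtotal case. The main obstacle throughout is not conceptual but bookkeeping: one must keep proof and refutation sides distinct at every step and recognise which contrapositive conditions are redundant, but this becomes mechanical once the initial biconditional $(x\ineq y) \logeq (x<y)\lor(y<x)$ is in hand.
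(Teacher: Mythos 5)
Your proposal is correct and carries out precisely the computation the paper intends: \cref{thm:preord} is stated with its proof omitted as a routine unpacking of the standard interpretation, and your decomposition of each axiom into its proof and refutation parts (with $(x<y)\defeq \rf{(y\lle x)}$), together with the observation that the biconditional $(x\ineq y)\logeq (x<y)\lor(y<x)$ drives the identification of redundant contrapositive clauses, is exactly that verification. The only small point worth adding is that in the converse direction one should also record the disjointness $\neg((x\le y)\land (y<x))$ needed for $(\le,>)$ to define an $\L$-predicate, which follows from axioms 4 and 6 together with irreflexivity of $\ineq$.
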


Such ``order pairs'' appear often in constructive mathematics, but the only abstract such definition I know of was in the Lean 2 proof assistant.\footnote{\url{https://github.com/leanprover/lean2/blob/master/library/algebra/order.lean\#L102}; it was removed in Lean 3.
I am indebted to Floris van Doorn for pointing this out.}
Often either $\le$ or $<$ is the other's negation (i.e.\ $\lle$ is affirmative or refutative),
but not always:

\begin{eg}
  Conway's \emph{surreal numbers}~\cite{conway:onag} are defined in classical logic by:
  \begin{itemize}\footnotesize
  \item[-] If $L,R$ are any two sets of numbers, and no member of $L$ is $\ge$ any member of $R$, then there is a number $\{ L | R \}$.
    All numbers are constructed in this way.
  \item[-] $x\ge y$ iff (no $x^R\le y$ and $x\le$ no $y^L$).
  \end{itemize}
  (For $x = \{L|R\}$, $x^L$ and $x^R$ denote typical members of $L$ or $R$ respectively.)
  Leaving aside the problematic inductive nature of this definition, we can write it affinely as
  \begin{align*}
    (y \lle x) &\defeq \ntpp{\ex x^R. (x^R \lle y)} \aand \ntpp{\ex y^L. (x\lle y^L)}\\
    &\logeq \fa x^R. (y \llt x^R ) \;\aand\; \fa y^L. (y^L \llt x)\\
    \intertext{where $\llt$ is its negation}
    (x\llt y) &\defeq \ntp{y\lle x}
    \logeq \ex x^R. (x^R \lle y) \;\aor\; \ex y^L. (x \lle y^L).
  \end{align*}
  In the antithesis translation, this yields a simultaneous inductive definition of $\le$ and $<$, neither of which is the Heyting negation of the other; see~\cite{forsbergfinite} and~\cite[\S11.6]{hottbook}.
  Omitting $R$ yields the \emph{plump ordinals} (see~\cite{taylor:ordinals} and~\cite[Ex.~11.17]{hottbook}).
\end{eg}



Recall that classically, if $\le$ is a total order we can define $x<y$ by $\neg (y\le x)$ or by $(x\le y)\land (x\neq y)$, and recover $x\le y$ as $\neg (y< x)$ or as $(x=y)\lor (x<y)$.
For an ``order pair'' as in \cref{thm:preord} that is \mtotal, the former holds, but the latter generally fails.
However, in affine logic we can say:

\begin{thm}\label{thm:lessthan-or-equal}
  Let $\lle$ be a refutative \mtotal order, and write $(x\llt y) \defeq \ntp{y \lle x}$.
  Then we have
  \begin{align}
    (x\llt y) &\logeq (x\lle y) \mand (x\lneq y)\label{eq:ltoreq1}\\
    (x\lle y) &\logeq (x\leq y) \mor (x\llt y).\label{eq:ltoreq2}
  \end{align}
\end{thm}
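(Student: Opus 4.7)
The plan rests on two preliminary observations. First, since $\lle$ is refutative, the strict order $\llt$ is \emph{affirmative}: unfolding $(x\llt y) \defeq \ntp{y\lle x}$ and using $(y\lle x) \logeq \wn(y\lle x)$ yields $(x\llt y) \logeq \ntp{\wn(y\lle x)} \logeq \oc\,\ntp{y\lle x} \logeq \oc(x\llt y)$. This permits duplication of strict-order hypotheses: $(x\llt y) \logeq (x\llt y) \mand (x\llt y)$. Second, de Morgan applied to $(x\leq y) \defeq (x\lle y) \aand (y\lle x)$ yields $(x\lneq y) \logeq (y\llt x) \aor (x\llt y)$.

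To prove \eqref{eq:ltoreq1}, I would argue as follows. For the forward direction, duplicate $(x\llt y)$ using affirmativity; one copy combines with \mtotality via disjunctive syllogism to yield $(x\lle y)$, and the other gives $(x\lneq y)$ via the $\aor$-inclusion $(x\llt y) \types (x\lneq y)$. For the backward direction, distribute $\mand$ over $\aor$ to rewrite $(x\lle y) \mand (x\lneq y)$ as $((x\lle y) \mand (y\llt x)) \aor ((x\lle y) \mand (x\llt y))$; the first disjunct is $(x\lle y) \mand \ntp{x\lle y} \types \bot$ by non-contradiction, and the second projects to $(x\llt y)$ by semicartesianness.

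For \eqref{eq:ltoreq2}, I would unfold $(x\leq y) \mor (x\llt y) \logeq \ntp{(x\lneq y) \mand (y\lle x)}$ and work contrapositively. The forward direction then follows immediately from \eqref{eq:ltoreq1}: assuming $(x\lle y)$ together with $(x\lneq y) \mand (y\lle x)$, that equation gives $(x\llt y) \mand (y\lle x) \logeq \ntp{y\lle x} \mand (y\lle x) \types \bot$. For the backward direction, I would prove the contrapositive $\ntp{x\lle y} \types \ntp{x\leq y} \mand \ntp{x\llt y}$ by duplicating $\ntp{x\lle y} = (y\llt x)$, which is affirmative, and then applying the two inclusions $(y\llt x) \types (x\lneq y)$ (from de Morgan) and $(y\llt x) \types (y\lle x)$ (from \mtotality by disjunctive syllogism).

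The main subtlety is the bookkeeping of multiplicative versus additive connectives. The easy inclusion $(x\leq y) \aor (x\llt y) \types (x\lle y)$ is immediate, but \eqref{eq:ltoreq2} demands the \emph{stronger} direction $(x\leq y) \mor (x\llt y) \types (x\lle y)$, which cannot go through without the duplication step; the crucial enabler throughout is thus the systematic use of affirmativity of $\llt$ (equivalently, refutativity of $\lle$) to freely duplicate strict-order hypotheses and contrapose back and forth.
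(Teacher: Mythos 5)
Your proof is correct and follows essentially the same route as the paper: the backward direction of~\eqref{eq:ltoreq1} is the same distribution of $\mand$ over $\aor$ killing the contradictory disjunct, and the forward direction uses the same two ingredients (\mtotal{}ity to get $(x\llt y)\types(x\lle y)$ and refutativity of $\lle$ to duplicate the affirmative hypothesis $x\llt y$). The only difference is that the paper dispatches~\eqref{eq:ltoreq2} in one line as the de Morgan dual of~\eqref{eq:ltoreq1}, whereas you reverify both directions by hand; your version is sound but slightly redundant.
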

\begin{proof}
  For any partial order we have
  \begin{align*}
    (x\lle y)\mand (x\lneq y)
    &\logeq (x\lle y) \mand \ntp{(x\lle y) \aand (y\lle x)}\\
    &\logeq (x\lle y) \mand \big(\ntp{x\lle y} \aor \ntp{y\lle x}\big)\\
    &\logeq \big((x\lle y) \mand \ntp{x\lle y}\big) \aor \big((x\lle y) \mand\ntp{y\lle x}\big)\\
    &\logeq \bot \aor \big((x\lle y) \mand\ntp{y\lle x}\big)\\
    &\logeq (x\lle y) \mand (x\llt y).
  \end{align*}
  This certainly implies $x\llt y$.
  Conversely, \mtotality of $\lle$ means $(x\llt y) \types (x\lle y)$, while refutativity implies $(x\llt y) \types ((x\llt y)\mand (x\llt y))$, so that $(x\llt y)$ implies $(x\lle y) \mand (x\llt y)$.
  This gives~\eqref{eq:ltoreq1}, while~\eqref{eq:ltoreq2} is simply its De Morgan dual.
\end{proof}

Thus, while the constructive $\le$ does not mean ``less than or equal to'', at least in some cases (such as $\dR$; see \cref{sec:reals}) it does mean ``less than \emph{par} equal to'' (or perhaps, as suggested in \cref{rmk:words}, ``less than unless equal to'' or ``less than or else equal to''.)

\begin{rmk}
  In classical and intuitionistic mathematics, preorders can be identified with \emph{thin categories}: categories in which there is at most one arrow with any given domain and codomain.
  The situation is a bit more subtle in affine logic, and depends on choosing a correct definition of ``category''.
  Since in general we do not compare objects of a category for ``equality'', the objects of a category should form only a type rather than a set.
  Similarly, since we only compare arrows for equality if they are known to have the same domain and codomain, rather than a single $\L$-set of arrows we should have a collection of such sets $\hom_{\sC}(x,y)$ indexed by pairs of objects $x,y$.
  (This requires our theory to have dependent types.)

  Now an $\L$-set $A$ in which ``all elements are equal'' carries no more information than the proposition $\ex x^A.\top$, which is \emph{affirmative}.
  Thus, a ``thin category'' consists of a type together with an affirmative binary relation $\ex x^{\hom_\sC(x,y)}.\top$ that is transitive and reflexive, hence coincides with an \emph{affirmative} preorder.
  In general, therefore, whenever preorders are being treated like categories (for instance, when they are equipped with Grothendieck topologies to define sheaf toposes), they should be assumed affirmative.
\end{rmk}

\section{Real analysis}
\label{sec:reals}

Recall from \cref{rmk:nat-strong} that the natural numbers type $\dN$ is a strong set.
In fact it is a strong \atotal order, with order relation defined recursively:
\begin{mathpar}
  (0\lle n) \defeq \top\and
  (n+1\lle 0) \defeq \bot\and
  (n+1 \lle m+1) \defeq (n\lle m).
\end{mathpar}
The \textbf{integers} $\dZ$ are the type $\dN\times\dN$ with
\( ((a,b) \lle (c,d)) \defeq (a+d \lle b+c), \)
and the \textbf{rational numbers} $\dQ$ are $\dZ\times \dN$ with
\( ((x,y) \lle (u,v)) \defeq (x\cdot (v+1) \lle u\cdot (y+1)). \)
These \atotal orders are affirmative, refutative, \aandish, and decidable, as are the induced equalities $(x\leq y) \defeq ((x\lle y) \aand (y\lle x))$.
In the antithesis translation, they yield the usual posets of numbers.

We define addition and multiplication by recursion on $\dN$, and then by the usual formulas on $\dZ$ and $\dQ$, making $\dZ$ a \aint \aandish ring and $\dQ$ a \aandish \afield.

\begin{defn}
  The \textbf{Cauchy real numbers} are the partially ordered $\L$-set
  \begin{align*}
    \dR_c &\defeq \ocsetof{ x:\dQ^\dN | \fa n m. |x_n-x_m| \lle \frac1{n+1}+\frac1{m+1}}\\
    (x\lle y) &\defeq \fa n. \left(x_n \lle y_n + \textstyle\frac{2}{n+1}\right)\\
    (x\leq y) &\defeq ((x\lle y) \aand (y\lle x)) \logeq \fa n. |x_n-y_n| \lle \textstyle\frac{2}{n+1}.
  \end{align*}
\end{defn}

The set $\dR_c$ is a \aandish \mtotal order and a \aandish ring that is a \mfield.

\begin{thm}
  In the antithesis translation, the $\L$-set $\dR_c$ is the usual such $\I$-set with its usual linear order and induced equality and apartness.\qed
\end{thm}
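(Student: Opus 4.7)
The plan is to unwind the standard interpretation on the definition of $\dR_c$, computing the proof and refutation parts of each constituent predicate, and then compare term-by-term with the usual intuitionistic definitions of the Cauchy reals, their order, equality, and apartness.

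I would begin with the underlying pre-set. The existence predicate is written with $\ocsetof{\cdots}$, hence is affirmative, so in the standard interpretation it is simply the ordinary $\I$-predicate $\forall n m.\, |x_n - x_m| \le \tfrac{1}{n+1} + \tfrac{1}{m+1}$ on the operation type $\dQ^\dN$; this is the usual carrier. Next, since the order on $\dQ$ is affirmative, refutative, and decidable, its standard interpretation is $\pf{(p \lle q)} = (p \le q)$ and $\rf{(p \lle q)} = (p > q)$. Applying the formulas $\pf{(\fa n. P)} = \forall n.\, \pf P$ and $\rf{(\fa n. P)} = \exists n.\, \rf P$ to the definition of $\lle$ on $\dR_c$, I obtain
\begin{align*}
\pf{(x \lle y)} &= \forall n.\; x_n \le y_n + \tfrac{2}{n+1},\\
\rf{(x \lle y)} &= \exists n.\; x_n > y_n + \tfrac{2}{n+1}.
\end{align*}
The first is the standard definition of the (non-strict) order on Cauchy reals, and by the same token $\pf{(x \llt y)} = \rf{(y \lle x)}$ is the standard strict order.

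Finally, from $(x \leq y) \defeq (x \lle y) \aand (y \lle x)$ together with $\pf{(P \aand Q)} = \pf P \land \pf Q$ and $\rf{(P \aand Q)} = \rf P \lor \rf Q$, I get
\begin{align*}
\pf{(x \leq y)} &= \forall n.\; |x_n - y_n| \le \tfrac{2}{n+1},\\
\rf{(x \leq y)} &= \exists n.\; |x_n - y_n| > \tfrac{2}{n+1},
\end{align*}
which are exactly the standard equality and apartness relations on the Cauchy reals. This yields the desired identification.

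The computations above are essentially routine. The only mildly subtle point---where I would take care---is to confirm that the linear definition's existence predicate really does land in the affirmative fragment (so that no spurious refutation clause for ``being a Cauchy sequence'' leaks in), and to check that the reduction of $\rf{(x\lle y)}$ to $\exists n.\, x_n > y_n + \tfrac{2}{n+1}$ uses refutativity of the rational order in the correct direction when pulling $\rf{}$ across $\fa$. Neither point poses any real difficulty; everything is a direct calculation once the dictionary of \cref{sec:chu} is in hand.
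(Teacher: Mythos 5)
Your computation is correct and is exactly the routine unwinding that the paper itself omits (the theorem is stated with an immediate \qed and no written proof): the existence predicate is affirmative so the carrier is the usual one, $\fa$ turns into $\forall/\exists$ on the proof/refutation parts, decidability of the rational order identifies $\rf{(p\lle q)}$ with $p>q$, and $\aand$ gives the standard equality and apartness. Nothing further is needed.
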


The Dedekind real numbers are a little more surprising.
We first note that, just as in classical logic (but not intuitionistic logic), the notion of ``one-sided cut'' in affine logic doesn't depend on the side, or whether the cuts are open or closed.

\begin{defn}
  Let $L \lsub \dQ$.
  \begin{itemize}
  \item $L$ is a \textbf{lower set} if $\fa r s. (((s\lin L) \mand (r\llt s)) \imp (r\lin L))$.
  \item $L$ is \textbf{upwards-open} if $\fa r. ((r\lin L) \imp \ex s. ((r\llt s) \mand (s\lin L)))$.
  \item $L$ is \textbf{upwards-closed} if $\fa s. ((\fa r. ((r \llt s) \imp (r \lin L))) \imp (s\lin L))$.
  \end{itemize}
  Dually, we have \textbf{upper sets}, \textbf{downwards-open}, and \textbf{downwards-closed}.
\end{defn}

\begin{thm}\label{thm:cuts}
  The following $\L$-sets are isomorphic:
  \begin{align*}
    &\ocsetof{\opl \lin \P\dQ | \opl \text{ is an upwards-open lower set}}\\
    &\ocsetof{\cll \lin \P\dQ | \cll \text{ is an upwards-closed lower set}}\\
    &\ocsetof{\opu \lin \P\dQ | \opu \text{ is a downwards-open upper set}}\\
    &\ocsetof{\clu \lin \P\dQ | \clu \text{ is a downwards-closed upper set}}.
  \end{align*}
\end{thm}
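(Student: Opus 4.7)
The plan is to construct explicit $\L$-isomorphisms from two families of maps: linear complementation $U \mapsto \cm U$, which swaps lower with upper sets, and a closure/interior pair, which swaps open with closed. Involutivity $\cmcm U \logeq U$ makes complementation self-inverse, while density of $\dQ$ (any two rationals admit a rational strictly between them) will make closure and interior mutually inverse; composing the four constructions then yields all six pairwise isomorphisms.

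First I would verify the complementation bijections $\opl \leftrightarrow \clu$ via $\clu \defeq \cm{\opl}$ and $\cll \leftrightarrow \opu$ via $\opu \defeq \cm{\cll}$. The upper-set axiom for $\clu$ is the linear contrapositive of the lower-set axiom for $\opl$, using $\ntp{A \mand B} \logeq (A \imp \nt B)$ and $(A \imp B) \logeq (\nt B \imp \nt A)$. The downwards-closed axiom for $\clu$, namely $\fa r. ((\fa s. ((r \llt s) \imp s \lin \clu)) \imp r \lin \clu)$, arises as the contrapositive of upwards-openness of $\opl$, using additionally $\ntp{\ex s. P(s)} \logeq \fa s. \nt{P(s)}$. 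Inverseness is immediate from $\cmcm U \logeq U$, and the case $\cll \leftrightarrow \opu$ is verbatim dual.

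Second I would handle closure/interior on lower sets via
\[ \cll(s) \defeq \fa r^\dQ. ((r \llt s) \imp r \lin \opl), \qquad \opl(r) \defeq \ex s^\dQ. ((r \llt s) \mand s \lin \cll); \]
the upper-case pair $\opu \leftrightarrow \clu$ is dual. Lower-ness of both images follows from transitivity of $\llt$; upwards-closedness of the defined $\cll$ and upwards-openness of the defined $\opl$ each use density of $\dQ$ to interpolate a rational between two given ones. For the round-trip $\opl \mapsto \cll \mapsto \opl'$ with $\opl'(r) \logeq \ex s. ((r \llt s) \mand \fa t. ((t \llt s) \imp t \lin \opl))$, one direction specializes the inner variable to $r$; the reverse uses upwards-openness of $\opl$ to produce $s' \lin \opl$ with $r \llt s'$ and then density to pick $r \llt s \llt s'$, whence lower-ness of $\opl$ yields $\fa t. ((t \llt s) \imp t \lin \opl)$. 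For the round-trip $\cll \mapsto \opl \mapsto \cll'$ with $\cll'(s) \logeq \fa r. ((r \llt s) \imp \ex t. ((r \llt t) \mand t \lin \cll))$, one direction uses lower-ness of $\cll$ combined with density, and the other uses upwards-closedness of $\cll$ applied after reducing $\cll'(s)$ to $\fa r. ((r \llt s) \imp r \lin \cll)$ via lower-ness.

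The main potential obstacle is bookkeeping rather than any single difficult step: one must check that each constructed predicate lands in the appropriate pre-set (satisfying the stated lower/upper and open/closed axioms) and that the maps respect the $\L$-equality on $\P\dQ$. These verifications are routine manipulations of the definitions, since each defining predicate involves only quantifiers over $\dQ$, the already-affirmative order $\llt$, and linear connectives; the only genuine mathematical inputs are density of $\dQ$ and the de Morgan dualities of linear logic, both of which the paper's framework supplies automatically.
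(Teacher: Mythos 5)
Your proposal is correct and matches the paper's proof: the paper simply exhibits the same four isomorphisms (linear complementation $\cm{(-)}$ to swap lower and upper sets, and the interpolation formulas $\cll(s) \defeq \fa r.((r\llt s)\imp (r\lin\opl))$ and $\opl(r)\defeq \ex s.((r\llt s)\mand(s\lin\cll))$ together with their upper-set duals) and leaves the verifications implicit. Your additional checks via density of $\dQ$, transitivity of $\llt$, and the linear de Morgan laws are exactly the routine details the paper omits.
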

\begin{proof}
  The isomorphisms are
  \begin{alignat*}{2}
    \cll &\defeq \lsetof{s\lin\dQ | \fa r. ((r \llt s) \imp (r \lin \opl))}
    &\qquad \clu &\defeq \cm{\opl}\\
    \opl &\defeq \lsetof{r\lin \dQ | \ex s. ((r\llt s) \mand (s\lin \cll))}
    &\qquad \opu &\defeq \cm{\cll}\\
    \clu &\defeq \lsetof{r\lin\dQ | \fa s. ((r \llt s) \imp (s \lin \opu))}
    &\qquad \cll &\defeq \cm{\opu}\\
    \opu &\defeq \lsetof{s\lin \dQ | \ex r. ((r\llt s) \mand (r\lin \clu))}
    &\qquad \opl &\defeq \cm{\clu}\qedhere
  \end{alignat*}
\end{proof}

\begin{defn}\label{defn:cuts}
  We write $\cC$ for any of the sets in \cref{thm:cuts}, and we call its elements \textbf{cuts}.
\end{defn}

\noindent
We give $\cC$ the partial order induced from containment of \emph{lower} sets.
Thus, if we write $x_\opl,x_\cll,x_\opu,x_\clu$ for the four representations of $x\lin\cC$, we have
\[
\begin{array}{ccccccccc}
  (x\lle y)&\logeq&(x_\opl \lsub y_\opl)&\logeq&(x_\cll \lsub y_\cll)&\logeq&(y_\opu \lsub x_\opu)&\logeq&(y_\clu \lsub x_\clu)\\
  (x\llt y)&\logeq&(y_\opl \lnsub x_\opl)&\logeq&(y_\cll \lnsub x_\cll)&\logeq&(x_\opu \lnsub y_\opu)&\logeq&(x_\clu \lnsub y_\clu).
\end{array}
\]
Using \atotal{ity} of the order on $\dQ$, we can show that this order on $\cC$ is \mtotal.
If we identify $r\lin\dQ$ with the cut $r_\opl \defeq \lsetof{q\lin\dQ | q \llt r}$, then $\dQ$ is fully order-embedded in $\cC$, and moreover for any $x\lin\cC$ and $r\lin\dQ$ we have
\begin{alignat*}{2}
  (r \llt x) &\logeq (r\lin x_\opl)
  &\qquad (x \llt r) &\logeq (r \lin x_\opu)\\
  (r \lle x) &\logeq (r\lin x_\cll)
  &\qquad (x \lle r) &\logeq (r \lin x_\clu).
\end{alignat*}
Thus, we can define a cut $x$ by specifing any one of the relations $(-\llt x)$, $(- \lle x)$, $(x \llt -)$, and $(x\lle -)$ on $\dQ$ which has the appropriate property.
This is usually more congenial than working explicitly with upper or lower subsets of $\dQ$.

In intuitionistic logic, it is common to work with two-sided cuts instead.
But because an $\L$-subset is a \emph{complemented} $\I$-subset, in the antithesis translation our one-sided $\L$-cuts become two-sided $\I$-cuts.

\begin{thm}\label{thm:std-ivl}
  In the antithesis translation, $\cC$ corresponds to the set of pairs $(L,U)$ of $\I$-subsets of $\dQ$ such that $L$ is an upwards-open lower set, $U$ is a downwards-open upper set, and $L<U$.
  Its induced order is
  \begin{align*}
    ((L_1,U_1) \le (L_2,U_2)) &\logeq ((L_1\subseteq L_2) \land (U_2 \subseteq U_1)).\\
    ((L_1,U_1) < (L_2,U_2)) &\logeq \exists r. ((r\in L_2) \land (r\in U_1)).
  \end{align*}
\end{thm}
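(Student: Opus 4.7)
My approach is to directly unravel the standard interpretation of the defining conditions on $\cC$, using the representation as upwards-open lower $\L$-subsets from \cref{thm:cuts}. By \cref{thm:rel}, an $\L$-subset $\opl$ of $\dQ$ corresponds to a pair $(L,N)=(\opl_\pf,\opl_\rf)$ of $\I$-subsets with $L(a)\land N(b)\to a\neq b$. I would mechanically compute the $\pf$-side of the conditions ``$\opl$ is a lower set'' and ``$\opl$ is upwards-open'' using the formulas $\pfp{P\imp Q} = (\pf P\to \pf Q)\land (\rf Q\to \rf P)$, $\rfp{P\mand Q} = (\pf P\to \rf Q)\land (\pf Q\to \rf P)$, and the quantifier formulas from \cref{sec:types}. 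The lower-set condition unfolds to: $L$ is an $\I$-lower set, $N$ is an $\I$-upper set, and $L<N$ strictly (from disjointness combined with the cross-clause $N(r)\land L(s)\to r\ge s$). The upwards-open condition unfolds to: $L$ is upwards-open, together with the auxiliary clause
\[(\forall s>r.\,N(s))\land (\forall s\in L.\,r\ge s)\to N(r).\]

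Next, I would bridge to the theorem's $(L,U)$ presentation by setting $U(s)\defeq \exists r<s.\,N(r)$, the ``downward opening'' of $N$. Using density of $\dQ$ and the upper-set property of $N$, this $U$ is automatically a downwards-open upper set with $L<U$. Conversely, given a pair $(L,U)$ as in the theorem, I would set $N(r)\defeq \forall s>r.\,U(s)$ and check the five conditions on $(L,N)$; the auxiliary clause falls out directly from density (pick a rational strictly between $r$ and any $s''>r$ and unfold). The two round-trips then verify: $U(s) \logeq \exists r<s.\,\forall s'>r.\,U(s')$ uses downwards-openness of $U$, while $N(r) \logeq \forall s>r.\,\exists r'<s.\,N(r')$ reduces via the upper-set property of $N$ to $\forall s>r.\,N(s)$, after which $L<N$ forces $\forall s\in L.\,r\ge s$, and the auxiliary clause then concludes $N(r)$.

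Finally, I would translate the order: $\opl_1\lle\opl_2$ has $\pf$-part $L_1\subseteq L_2\land N_2\subseteq N_1$, and the latter is equivalent to $U_2\subseteq U_1$ via the same $N$-$U$ correspondence. The strict order $\opl_1\llt\opl_2$ is the linear negation of $\opl_2\lle\opl_1$, with $\pf$-part $\exists r.\,L_2(r)\land N_1(r)$; upwards-openness of $L_2$ lets one pass from such an $r$ to some $r'>r$ in $L_2$, and then $r'\in U_1$ since $r<r'$ and $N_1(r)$. The main obstacle will be the bookkeeping in the round-trip step, specifically recognizing that the auxiliary clause (v) is exactly what is needed to recover $N$ from its downward opening $U$ in the presence of $L<N$.
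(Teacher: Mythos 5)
Your proposal is correct and follows essentially the same route as the paper: unfold the $\L$-subset via \cref{thm:rel} into a disjoint pair $(L,\cancel L)$, observe that the lower-set and upwards-open conditions make $L$ an upwards-open $\I$-lower set and $\cancel L$ a downwards-closed $\I$-upper set with $L<\cancel L$, and then pass to $(L,U)$ via the intuitionistic bijection between closed and open upper cuts. Your ``auxiliary clause'' is just the downwards-closedness of $\cancel L$ with a redundant extra hypothesis (it follows from disjointness and decidability of the order on $\dQ$), and your explicit verification of the order formulas fills in a step the paper treats as routine.
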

\begin{proof}
  By \cref{thm:rel}, an element of $\P\dQ$ is a disjoint pair $(L,\cancel L)$ of subsets of $\dQ$.
  To say that it is an $\L$-lower-set means that $L$ is an $\I$-lower-set and $\cancel L$ is an $\I$-upper-set.
  Given this, disjointness is equivalent to $L < \cancel L$.
  And to say that it is $\L$-upwards-open means that $L$ is $\I$-upwards-open and $\cancel L$ is $\I$-downwards-closed.
  Finally, the bijection between open and closed \emph{upper} cuts (or, dually, lower ones) is also true intuitionistically:
  \begin{equation*}
    U \defeq \setof{s | \exists r. ((r<s) \land (r\in \cancel L)) }\hspace{1cm}
    \cancel L \defeq \setof{r | \forall s. ((r<s) \to (s\in U)) }.\qedhere
  \end{equation*}
\end{proof}

The $\I$-set of pairs $(L,U)$ in \cref{thm:std-ivl} is also called the set of \textbf{(rational) cuts}~\cite{richman:cuts}, or sometimes the \textbf{interval domain}.
It is distinct from $\dR$ even classically, containing additionally all closed intervals $[a,b]$ for $-\infty\le a\le b\le \infty$.

\begin{defn}
  The \textbf{Dedekind real numbers} $\dR_d$ are the $\L$-set of $x\ocin\cC$ with
  \begin{itemize}
  \item \textbf{boundedness}: $\ex q^\dQ.(q\llt x) \mand \ex q^\dQ. (x \llt q)$
  \item \textbf{\alocated{}ness}: $\fa r^\dQ s^\dQ. ((r\llt s) \imp ((r\llt x) \aor (x \llt s)))$.
  \end{itemize}
\end{defn}

\emph{All} cuts are ``\mlocated{}'', $(r\llt s) \imp ((r\llt x) \mor (x \llt s))$, by $\mor$-excluded-middle.
In the antithesis translation, $\dR_d$ is the usual set of Dedekind reals. 

\begin{rmk}
  As we did for $\dN$ in \cref{rmk:nat-strong}, we can prove entirely in affine logic that the real numbers form a \aandish set.
  Suppose $(x\lle y)\aand (y\lle z)$; to show $x\lle z$ we assume $r:\dQ$ with $r\llt x$ and must prove $r\llt z$.
  Now there is a $s:\dQ$ with $r\llt s\llt x$, and since $y$ is \alocated we have $(r\llt y)\aor (y\llt s)$.
  Doing a case split on this, if $r\llt y$ we use $y\lle z$ to conclude $r\llt z$, while if $y\llt s$ we use $x\lle y$ to conclude $x\llt s$, a contradiction.
\end{rmk}


Now, there are at least two natural ways to define addition on $\cC$:
\begin{align*}
  (x \umplus y \llt q) &\defeq \ex r^\dQ s^\dQ. ((q\leq r+s)\mand (x \llt r) \mand (y \llt s))\\
  (q \llt x \lmplus y) &\defeq \ex u^\dQ v^\dQ. ((q\leq u+v)\mand (u\llt x) \mand (v\llt y)).
\end{align*}

If $x,y\ocin\dR_d$, one can prove that $x\lmplus y \leq x\umplus y$, and in the antithesis translation they give the usual addition on Dedekind reals:
\begin{align}
  (x+y < q) &\defeq \exists rs. ((q=r+s) \land (x<r) \land (y<s))\label{eq:uiplus}\\
  (q < x+y) &\defeq \exists rs. ((q=r+s) \land (r<x) \land (s<y))\label{eq:liplus}.
\end{align}
However,
for \emph{cuts}, $\umplus$ and $\lmplus$ are distinct.
In the antithesis translation, with $+$ for $\I$-cuts defined using~\eqref{eq:uiplus}--\eqref{eq:liplus}, we have $(x\umplus y < q) \logeq (x+y < q)$, but $q < x\umplus y$ is weaker than $q < x+y$.
One place where this matters is in defining metric spaces.

\begin{defn}\label{defn:cut-metric}
  A \textbf{cut-metric} on an $\L$-type $X$ is an operation $d:\cC^{X\times X}$ with
  \begin{alignat}{2}
    &\types_{x,y\ocin X} &\;& 0 \lle d(x,y) \notag\\
    &\types_{x\ocin X} &\;& d(x,x)\leq 0 \notag\\
    &\types_{x,y,z\ocin X} &\;& d(x,z) \lle d(x,y) \umplus d(y,z).\notag
  \end{alignat}
\end{defn}

For any cut-metric, $(x\lle y) \defeq (d(x,y) \leq 0)$ defines a preorder.
If $d$ is symmetric, this is already an equality making $X$ a set; otherwise we can symmetrize it as in \cref{sec:posets}.
(We can also symmetrize $d$ directly with $d'(x,y) \defeq \sup(d(x,y),d(y,x))$.)
If $X$ is already a set and $d$ a function, the usual metric separation condition $(d(x,y)\leq 0) \types (x\leq y)$ makes its equality coincide with that obtained in this way.

In particular, if $d(x,y)\ocin \dR_d$ for all $x,y$, then the antithesis translation of $X$ is an $\I$-quasi-metric space, and an $\I$-metric space if we impose symmetry.

Now suppose $X$ is a cut-metric space and we have $a\lin X$ and $B\lsub X$.
As observed intuitionistically in~\cite{richman:cuts}, $\cC$ is a complete lattice (which $\dR_d$ is not, constructively\footnote{The ``strongly monotonic cuts'' or ``MacNeille reals'' are also a complete lattice, but their meets and joins involve double-negation, making them less useful than those of $\cC$; see~\cite[\S3]{richman:cuts}.}); thus we can define the \textbf{distance from $a$ to $B$} as an infimum:
\begin{align*}
  d(a,B) &\defeq \textstyle\inf_{b\lin B} d(a,b).
\end{align*}
Rather than defining infima in $\L$-posets in general, we simply make this explicit:
\begin{align*}
  (d(a,B) \llt q) &\defeq \ex b^X. ((b\lin B) \mand (d(a,b) \llt q))\\
  (q\lle d(a,B)) &\logeq \fa b^X. ((b\lin B) \imp (q \lle d(a,b)))\\
  (q\llt d(a,B)) &\logeq \ex r. ((q\llt r) \mand \fa b^X. ((b\lin B) \imp (r \lle d(a,b)))).
\end{align*}
Even if each $d(a,b)$ is a Dedekind real, $d(a,B)$ may not be.
But the observation of Richman~\cite{richman:cuts} is that if we treat $d(a,B)$ as a cut, then its inequality relations to rational (hence also real) numbers are exactly what we would expect of such a ``distance''.
In the antithesis translation, these become:
\begin{align*}
  (d(a,B) < q) &\logeq \exists b^X. ((b\in B) \land (d(a,b) < q))\\
  (q \le d(a,B)) &\logeq \forall b^X. (((b\in B) \to (q \le d(a,b))) \land ((d(a,b) < q) \to (b\in \cancel B))).
\end{align*}
If $B$ is affirmative, $q \le d(a,B)$ becomes Richman's $\forall b^B. (q \le d(a,b))$.
We also have:
\begin{align*}
  \MoveEqLeft (d(a,B) \lle d(a,B'))\\
  &\logeq \fa q. ((d(a,B') \llt q) \imp (d(a,B) \llt q))\\
  &\logeq \fa q. ((\ex \smash{b'}^X. ((b'\lin B') \mand (d(a,b') \llt q))) \imp \ex b^X. ((b\lin B) \mand (d(a,b) \llt q)))\\
  &\logeq \fa q. \fa \smash{b'}^X. (((b'\lin B') \mand (d(a,b') \llt q)) \imp \ex b^X. ((b\lin B) \mand (d(a,b) \llt q))).\\
  \intertext{At least in the Dedekind-real case, we can then write $\ep \defeq q - d(a,b')$ to get}
  &\mathrel{\phantom{\logeq}} \fa \ep. \fa \smash{b'}^X. ((b'\lin B') \imp \ex b^X. ((b\lin B) \mand (d(a,b) \llt d(a,b') + \ep)))
\end{align*}
whose antithesis translation, when $B,B'$ are affirmative, reduces to Richman's: 
\begin{equation*}
  (d(a,B) \le d(a,B')) \logeq
  \forall \ep. \forall \smash{b'}^X. ((b'\in B') \to \exists b^X. ((b\in B) \land (d(a,b) < d(a,b') + \ep))).
\end{equation*}
Still following~\cite{richman:cuts}, we can define the \textbf{(directed) Hausdorff distance} between two subsets $A,B\lsub X$ as:
\begin{align*}
  d(A,B) &\defeq \textstyle \sup_{a\lin A} d(a,B) = \sup_{a\lin A} \inf_{b\lin B}d(a,b) \\
  (d(A,B) \llt q) &\logeq \ex q'. ((q' \llt q) \mand \fa a^X. ((a \lin A) \imp \ex b^X. ((b\lin B) \mand (d(a,b) \llt q'))))
\end{align*}
However, unlike Richman, we can show:

\begin{thm}\label{thm:hausdorff}
  The Hausdorff distance is a cut-metric on $\P X$.
\end{thm}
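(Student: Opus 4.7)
The plan is to verify the three axioms of \cref{defn:cut-metric} for $d(A,B)$, plus functoriality of $d$ on $\P X \ltens \P X$. Non-negativity $0 \lle d(A,B)$ unfolds to $\ntp{d(A,B) \llt 0}$: if $d(A,B) \llt 0$ held, we would obtain $q' \llt 0$ and, for any $a \lin A$, some $b \lin B$ with $d(a,b) \llt q' \llt 0$, contradicting $0 \lle d(a,b)$ from the cut-metric axioms on $X$. Reflexivity $d(A,A) \leq 0$ combines non-negativity with $d(A,A) \lle 0$: given $0 \llt q$, choose a rational $q'$ with $0 \llt q' \llt q$, and for each $a \lin A$ take the inner existential witness $b \defeq a$, noting $d(a,a) \leq 0 \llt q'$ by reflexivity on $X$.

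The main content is the triangle inequality $d(A,C) \lle d(A,B) \umplus d(B,C)$, i.e.\ $d(A,B) \umplus d(B,C) \llt q \types d(A,C) \llt q$. Unpacking $\umplus$ supplies rationals $r,s$ with $q \leq r+s$, together with $d(A,B) \llt r$ and $d(B,C) \llt s$. Unpacking the latter two yields $r' \llt r$ and $s' \llt s$ together with
\[\fa a^X. ((a\lin A) \imp \ex b^X. ((b\lin B) \mand (d(a,b) \llt r')))\]
and the analogous statement for $B \to C$ at $s'$. Given $a \lin A$, apply the first to produce $b \lin B$ with $d(a,b) \llt r'$, then feed this $b$ (consuming its $b \lin B$ witness) into the second to produce $c \lin C$ with $d(b,c) \llt s'$. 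The triangle inequality of $d$ on $X$ at $a,b,c \ocin X$ gives $d(a,c) \lle d(a,b) \umplus d(b,c) \llt r' + s'$, where the second $\llt$ is witnessed by $r',s'$ in the definition of $\umplus$. Since $r' + s' \llt r + s = q$, setting $q' \defeq r' + s'$ witnesses $d(A,C) \llt q$. Functoriality is easier: if $A \leq A'$ in $\P X$, the equality $\fa a^X. ((a\lin A) \liff (a\lin A'))$ lets us substitute in the defining formula for $d(A,B)$, and similarly for the inner $b$-argument, passing the equivalence through the existential.

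The main obstacles I anticipate are (i) the careful linear bookkeeping in the triangle step, where the extracted $b \lin B$ must be threaded exactly once into the second universal and the two metric-witnesses $d(a,b) \llt r'$ and $d(b,c) \llt s'$ must be combined via $\mand$ to feed the triangle inequality on $X$; and (ii) the edge case of an empty $A$, where the clause $\fa a^X. ((a\lin A) \imp \cdots)$ is vacuously satisfied and yields $d(A,B) \llt q$ for every $q$, so the strict non-negativity clause fails unless one implicitly restricts to inhabited subsets or otherwise regularizes the sup over the empty set.
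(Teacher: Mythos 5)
Your triangle-inequality argument is exactly the paper's proof: unpack $\umplus$ to get $r,s$ with $q\leq r+s$, extract $r'\llt r$ and $s'\llt s$ with the two $\fa/\ex$ statements, chase $a\mapsto b\mapsto c$ threading the single $b\lin B$ witness into the second universal, and conclude $d(a,c)\lle d(a,b)\umplus d(b,c)\llt r'+s'\lle q$. The paper's proof addresses only this axiom (citing Richman for the same chain), so your extra verifications are appropriate, and your worry about the empty subset is genuine: $d(\lempty,B)\llt q$ holds vacuously for every $q$, so $0\lle d(\lempty,B)$ fails, and the statement implicitly requires restricting to inhabited subsets (or weakening non-negativity) --- a point the paper does not address.
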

\begin{proof}
  The proof of the triangle inequality is essentially the same as that of its ``upper portion'' in~\cite[\S6]{richman:cuts}.
  We must show if $d(A,B)\umplus d(B,C) \llt q$ then $d(A,C)\llt q$.
  By definition of $\umplus$, we have $\ex r s. ((q\leq r+s)\mand (d(A,B) \llt r) \mand (d(B,C) \llt s))$.
  Now $d(A,B) \llt r$ and $d(B,C) \llt s$ yield $r'\llt r$ and $s'\llt s$ such that
  \begin{align*}
    & \fa a^X. ((a \lin A) \imp \ex b^X. ((b\lin B) \mand (d(a,b) \llt r')))\\
    & \fa b^X. ((b \lin B) \imp \ex c^X. ((c\lin C) \mand (d(b,c) \llt s'))).
  \end{align*}
  Thus, for any $a\lin A$ we get $b\lin B$ with $d(a,b)\llt r'$, then from $b$ we get $c\lin C$ with $d(b,c)\llt s'$.
  Hence $d(a,c) \lle d(a,b)\umplus d(b,c) \llt r'+s' \lle q$, so that $d(A,C)\llt q$.
\end{proof}

In~\cite[\S6]{richman:cuts} Richman notes that the cut-valued Hausdorff distance fails the triangle inequality if addition of cuts is defined by~\eqref{eq:uiplus}--\eqref{eq:liplus}.
He concludes that one should forget the ``lower cut'' part of the Hausdorff distance.
We conclude instead that the relevant addition of cuts is $\umplus$, not~\eqref{eq:uiplus}--\eqref{eq:liplus}.
Indeed,~\eqref{eq:uiplus}--\eqref{eq:liplus} are suspect right away, as it is not even clear whether they can be obtained \emph{simultaneously} as the antithesis translation of \emph{any} single definition of addition for $\L$-cuts.

The other example in~\cite[\S6]{richman:cuts} where cuts seem to have problems can also be resolved with affine logic.
Suppose (intuitionistically) $A$ is an abelian group and $p$ a prime with $\bigcap_n p^n A = 0$, i.e.\ $(\forall n. \exists c. (a \ieq p^n c)) \types_{a\in A} (a\ieq 0)$.
Define
\[|a| \defeq \inf \setof{p^{-n} | a \in p^n A} =
\inf \setof{p^{-n} | \exists c. (a \ieq p^n c)}. \]
In classical mathematics, this defines an ``ultranorm'', i.e.\
\(
  |a+b| \le \sup(|a|,|b|).
\)
Intuitionistically, if we interpret $|a|$ as a cut and $\sup$ as the binary supremum (the union of lower parts and intersection of upper parts), then the upper part of $|a+b| \le \sup(|a|,|b|)$ holds but the lower part can fail.

Our solution is to replace this ``additive'' binary supremum with a multiplicative one.
Returning to affine logic, for cuts $x,y$ we define
\begin{align*}
  (\msup (x,y) \llt q) &\defeq (x \llt q) \mand (y \llt q).
\end{align*}
Now let $A$ be an abelian $\L$-group and $(\fa n. \ex c^A. (a \leq p^n c)) \types_{a\in A} (a\leq 0)$, and define
\[|a| \defeq \inf \lsetof{x | \ex n. ((x \leq p^{-n}) \mand \ex c. (a \leq p^n c))}. \]
I claim that then we have
\(
  |a+b| \lle \msup(|a|,|b|).\notag
\)
This is again just like the ``upper part'' proof from~\cite{richman:cuts}: if $|a|\lle p^{-n}$ and $|b|\lle p^{-n}$, then $a \leq p^n c$ and $b \leq p^n d$ for some $c,d$, so that $a+b \leq p^n(c+d)$ and hence $|a+b|\lle p^{-n}$.
So in both cases, it is not that cuts are inadequate, but that the operations on cuts sometimes need to use multiplicative connectives rather than additive ones.

\begin{rmk}
  Another problematic area of analysis for constructive mathematics is the theory of measure spaces.
  Already in~\cite{bb:constr-analysis} complemented subsets were used as the domain for a constructive measure, and~\cite{nlab:cheng-spaces} formulates an abstract notion of measurable space based on a Chu construction like ours.
  This suggests that affine logic would also be a natural context for constructive measure theory.
\end{rmk}

\begin{rmk}
  We end this section with an example where proof-relevance matters.
  An $\I$-sequence of real (or rational) numbers is \textbf{Cauchy} if
  \begin{equation}
    \forall \ep. \exists k. \forall nm. (n>k\land m>k \to |x_n-x_m|\le \ep),\label{eq:cauchy}
  \end{equation}
  and \textbf{diverges}~\cite[\S2.3]{bb:constr-analysis} if
  \begin{equation}
    \exists \ep. \forall k. \exists nm. (n>k \land m>k \land |x_n-x_m|>\ep).\label{eq:div}
  \end{equation}
  These are formal De Morgan duals, so if we define Cauchy-ness of an $\L$-sequence by
  \begin{equation}
    \fa \ep. \ex k. \fa nm. (n>k \mand m>k \imp |x_n-x_m|\lle\ep),\label{eq:lcauchy}
  \end{equation}
  then its linear negation is divergence.

  However, in the absence of countable choice, it is often better to consider a Cauchy sequence as coming with a \emph{function} $K_\ep$, and Bishop presumably understands a divergent sequence to come with functions $N_k,M_k$.
  But if we write out the assertions of such functions by hand, the corresponding formulas
  \begin{gather}
    \exists K. \forall \ep. \forall nm. (n>K_\ep \land m>K_\ep \to |x_n-x_m|\le\ep)\label{eq:skcauchy} \\
    \exists \ep. \exists NM. \forall k. (N_k>k \land M_k > k \land |x_{N_k}-x_{M_k}|>\ep).\label{eq:skdiv}
  \end{gather}
  are no longer De Morgan duals.
  G\"{o}del's ``Dialectica'' interpretation%
~\cite{godel:dialectica,depaiva:dialectica-like,paiva:dialectica-chu,hofstra:dialectica} automatically does this sort of ``Skolemization'', so that~\eqref{eq:cauchy} and~\eqref{eq:div} would be interpreted as~\eqref{eq:skcauchy} and~\eqref{eq:skdiv} respectively; but
  this doesn't solve the problem that the two pairs are not each other's negations.

  Instead, we can write~\eqref{eq:cauchy} and~\eqref{eq:div} using the propositions-as-types interpretation into dependent type theory.
  This gives
  \begin{gather}
    \textstyle \prod_\ep \sum_k \prod_{n,m} (n>k\land m>k \to |x_n-x_m|\le \ep) \label{eq:patcauchy}\\
    \textstyle\sum_\ep \prod_k \sum_{n,m} (n>k \land m>k \land |x_n-x_m|>\ep)\label{eq:patdiv}
  \end{gather}
  which include the Skolem functions automatically, due to the ``type-theoretic axiom of choice'' $\prod_{x:A} \sum_{y:B} C(x,y) \cong \sum_{f:A\to B} \prod_{x:A} C(x,f(x))$.
  Moreover,~\eqref{eq:patcauchy} and~\eqref{eq:patdiv} are still De Morgan duals with respect to $\Sigma$ and $\Pi$.
  Therefore, we can obtain them from the antithesis translation of~\eqref{eq:lcauchy} applied to the propositions-as-types hyperdoctrine mentioned in \cref{eg:flim-hd}.
\end{rmk}

\section{Topology}
\label{sec:topology}

Finally, we consider point-set topologies.
There are many classically-equivalent ways to define a topology; first we consider neighborhood relations.

Note that the preorder $(U\lsub V) \defeq \fa x^A. ((x\lin U) \imp (x\lin V))$ on $\Omega^A$ makes sense even if $A$ is only a type, making $\Omega^A$ into a set.

\begin{defn}\label{defn:topology}
  A \textbf{topology} on a type $A$ is a predicate $\lint$ on $A\times \Omega^A$ with
  \[
  \begin{array}{rlll}
    (x\lint U) &\types & (x\lin U) & \text{(reflexivity)}\\
    (x\lint U) \mand (U\lsub V) & \types& (x\lint V) & \text{(isotony)}\\
    &\types & (x\lint A) & \text{(nullary additivity)}\\
    (x\lint U) \mand (x\lint V) &\types & (x\lint U \lcap V) &\text{(binary additivity)}\\
    (x\lint U) & \types& (x\lint \lsetof{ y | y \lint U}) & \text{(transitivity)}
  \end{array}
  \]
\end{defn}

Isotony implies each $(x\lint-)$ is a relation on the set $\Omega^A$.
We define a preorder on $A$ by $(x\lle y) \defeq \fa U. ((y\lint U) \imp (x\lint U))$, making $A$ into a set as well, such that $\lint$ is a relation on $A \ltens \Omega^A$.
Moreover, an arbitrary predicate $U:\Omega^A$ is contained in a smallest relation $\hat{U} \defeq \lsetof{x\in A | \ex y^A. ((x\leq y) \mand U(y))}$, and by definition of equality on $A$ we have $(x\lint U) \liff (x\lint \hat{U})$.
Thus, $\lint$ is determined by its behavior on subsets, so we may consider it to be a relation on $A\ltens \P A$.
If we instead assume $A$ is given as a set and $\lint$ as a relation on $A\ltens \P A$, then to ensure that the equality coincides with the one constructed above we must impose the $T_0$ axiom
\[ \fa U. ((x\lint U) \liff (y\lint U)) \types (x\leq y). \]

Now a relation on $A\ltens \P A$ is equivalently a function $\intr : \P A \to \P A$, and \cref{defn:topology} translates into an affine version of an ``interior operator'':
\begin{enumerate}[label=(\roman*)]
\item $\intr(U) \lsub U$
\item $(U\lsub V) \imp (\intr(U) \lsub \intr (V))$
\item $\intr(A) \leq A$
\item $\intr (U) \lmcap \intr (V) \lsub \intr(U\lcap V)$\label{item:lintr-meet}
\item $\intr(\intr(U)) \leq \intr(U)$
\end{enumerate}
plus the following form of the $T_0$ axiom:
\begin{enumerate}[label=(\roman*),start=6]
\item $(\fa U. ((x\lin \intr(U)) \liff (y\lin \intr(U)))) \imp (x\leq y)$.
\end{enumerate}
The most interesting of these axioms is~\ref{item:lintr-meet}, which is a version of the classical
\begin{equation}
  \intr(U)\cap \intr(V) \subseteq \intr(U\cap V)\label{eq:cintr-meet}
\end{equation}
that uses an \emph{additive} intersection on the right but a \emph{multiplicative} one on the left.
This may look more surprising than the (equivalent) binary additivity axiom for $\lint$ in \cref{defn:topology}, since in the latter $\mand$ is a logical connective while $\lcap$ is a set operation.
However, the following example suggests that this odd-looking mixture of additive and multiplicative intersections is exactly right:

\begin{eg}\label{eg:metric-top}
  Any cut-metric space (\cref{defn:cut-metric}) has a topology defined by:
  \[ (x \lint U) \defeq
  \ex \ep^{\dQ_{>0}}. \fa y^X. ((d(x,y) \llt \ep) \imp (y\lin U)).
  \]
  To prove binary additivity, from $(x\lint U)\mand (x\lint V)$ we can get $\ep_U$ and $\ep_V$, and then choose $\ep \defeq \min(\ep_U,\ep_V)$ to prove $x\lint (U\lcap V)$.
  Note that here we need to use \emph{both} hypotheses at once, so they must be combined with $\mand$ rather than $\aand$.
  We then have to show that given $y$ with $d(x,y) \llt \ep$ we have $y\lin U\lcap V$, i.e.\ that $(y\lin U) \aand (y\lin V)$.
  For $y\lin U$ we use $d(x,y) \llt \ep \lle \ep_U$ and the hypothesis from $x\lint U$, and dually.
  Note that here we need to use the same hypothesis $d(x,y) \llt \ep$ in proving both subgoals $y\lin U$ and $y\lin V$, so they must be combined with $\aand$ rather than $\mand$.
\end{eg}

Axiom~\ref{item:lintr-meet} is further clarified by writing it in terms of $\cl(U) \defeq \cmp{\intr (\cm U)}$:
\begin{equation}
  \label{eq:lcl-join}
  \cl(U\lcup V) \lsub \cl (U) \lmcup \cl (V)
\end{equation}
Since $\cmp{-}$ is involutive, in linear logic $\cl$ and $\intr$ contain the same data.
But intuitionistically, ``closure operators'' do not respect unions: a point may lie in the closure of $U\cup V$ without our being able to decide which of $U$ or $V$ it lies in the closure of.
Our~\eqref{eq:lcl-join} remedies this by taking one of the unions to be multiplicative.

On the other hand, classically \emph{and} intuitionistically the converse of~\eqref{eq:cintr-meet} always holds, so~\eqref{eq:cintr-meet} is equivalent to closure of the fixed points of $\intr$ (the open sets) under binary intersections.
It is harder to express~\ref{item:lintr-meet} using ``open $\L$-subsets''.

We now move on to the antithesis translation of an $\L$-topology.
This is rather complicated, since not only does $\lint$ give rise to two relations, each $\L$-subset $U$ is actually \emph{two} (disjoint) $\I$-subsets.
We start with some familiar special cases.

\begin{thm}
  Under the antithesis translation, an $\L$-topology such that
  \[
  \begin{array}{rll}
    (x\lint U)& \types& \oc(x\lint U) \mand (x \lint \oc U)
  \end{array}
  \]
  corresponds exactly to a $T_0$ $\I$-topology on a type $A$.
  If we write $x \iint U$ for the $\I$-relation ``$x$ is in the interior of $U$'', then the induced inequality on $A$ is
  \[ (x\ineq y) \defeq \exists U.((x\iint U) \land \neg(y\iint U)) \lor \exists U.(\neg(x\iint U) \land(y\iint U)). \]
\end{thm}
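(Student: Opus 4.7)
The plan is to exploit the two hypotheses to reduce the $\L$-topology to an ordinary $\I$-topology, then translate the axioms of \cref{defn:topology} directly. First I would unpack the hypotheses. The condition $(x\lint U) \types \oc(x\lint U)$ makes $\lint$ an affirmative predicate, so under the standard interpretation its refutation $\niint$ is the Heyting negation of its affirmation $\iint$. For the second hypothesis, observe that $\oc U \lsub U$ is always provable in linear logic (since $\oc P \imp P$ applied pointwise to the membership predicate), so isotony yields $(x\lint \oc U) \types (x\lint U)$; combined with the given $(x\lint U) \types (x\lint \oc U)$ this gives $(x\lint U) \logeq (x\lint \oc U)$. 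In the standard interpretation, $\oc U$ has the same affirmative part as $U$ but Heyting-negated refutation, so this equivalence forces $\iint$ to depend only on the affirmative part $U^+$ of $U$. Hence $\lint$ collapses to a single $\I$-predicate $\iint$ on $A \times \P A$, where $\P A$ is the ordinary $\I$-powerset.

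Next I would translate each axiom of \cref{defn:topology}. After the collapse, the hypothesis and conclusion of each axiom involve only affirmative propositions, so $\mand$, $\aand$, and $\imp$ all become their $\I$-counterparts, and the five axioms become precisely reflexivity, isotony, the two additivity clauses, and idempotence of the $\I$-interior, i.e.\ the axioms of an $\I$-neighborhood relation. The $T_0$ condition is built into the derivation of the preorder from $\lint$ via $(x\lle y) \defeq \fa U.\,((y\lint U) \imp (x\lint U))$. In the reverse direction, any $T_0$ $\I$-topology yields an $\L$-topology satisfying both hypotheses by setting the affirmation of $(x\lint U)$ to be $(x \iint U^+)$ and its refutation to the Heyting negation; these constructions are evidently mutually inverse.

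Finally, to extract the inequality formula I would compute $\ntp{(x\lle y) \aand (y\lle x)}$ using the definition $(x\lle y) \defeq \fa U.\,((y\lint U) \imp (x\lint U))$. Since $(x\lint U)$ is affirmative, $(x\lle y)$ has affirmation $\forall U.\,((y\iint U) \to (x\iint U))$ and refutation $\exists U.\,((y\iint U) \land \neg(x\iint U))$; applying the standard interpretation of the $\aand$-equality from \cref{eg:lin-set-omega} then yields the claimed disjunctive formula for $(x\ineq y)$. The main obstacle is really just the first step: one must carefully verify, for the transitivity axiom, that the inner $\L$-subset $\lsetof{y | y\lint U}$ is itself affirmative (which it is, by the first hypothesis), so that its affirmative part is precisely the ordinary $\I$-interior of $U^+$; with that in hand, the remaining translations are routine.
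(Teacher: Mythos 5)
Your proposal is correct and follows essentially the same route as the paper's own (three-sentence) proof: use affirmativity of $(x\lint U)$ together with $(x\lint U)\logeq(x\lint\oc U)$ (the latter via $\oc U\lsub U$ and isotony) to collapse $\lint$ to a single $\I$-neighborhood relation depending only on the affirmative part of $U$, translate the axioms, identify the induced equality with the $T_0$ condition, and compute $\ntp{(x\lle y)\aand(y\lle x)}$ for the inequality. Your write-up simply supplies the details (including the check on $\lsetof{y\mid y\lint U}$ in the transitivity axiom and the converse construction) that the paper leaves implicit.
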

\begin{proof}
  The assumption implies that $\lint$ is determined by a single $\I$-relation $\iint$ between points of $A$ and $\I$-subsets of $A$.
  The axioms on $\lint$ then translate to the usual definition of an $\I$-topology in terms of a neighborhood relation.
  Finally, our definition of equality in a topological space corresponds to the $T_0$ axiom.
\end{proof}

If $U$ is an $\I$-subset of an $\I$-set $A$ with inequality $\ineq$, we write
\[(x\notin U) \defeq \forall y^A. ((y\in U) \to (x\ineq y)).\]
This is the same as saying that $x$ belongs to the inequality complement of $U$, i.e.\ $x\lnin \ochat U$ in the antithesis translation.

\begin{thm}
  Under the antithesis translation, an $\L$-topology such that\footnote{A simpler attempt at~\eqref{eq:lapart2} would be $(x\lint \wn U) \types (x\lint U)$, but that is inconsistent with reflexivity at least in the antithesis translation, since $\wn(\zero,\zero) = (\one,\zero)$.}
  \begin{alignat}{2}
    (x\lint U)& \types&\;& \oc(x\lint U)\label{eq:lapart1}\\
    (x\lint \wn U) \mand \oc(x\lin U)& \types&\;& (x \lint U)\label{eq:lapart2}
  \end{alignat}
  corresponds to a \textbf{point-set pre-apartness space} satisfying the \textbf{reverse Kolmogorov property} in the sense of~\cite[p20]{bv:apartness},\footnote{\cite{bv:apartness} writes $x\notin K$ to mean $\neg(x\in K)$; our $x\notin K$ is written there as $x\in \mathord{\sim}K$.} i.e.\ an $\I$-set with an inequality $\ineq$ and a relation $\apart$ between points and $\I$-subsets such that
  \begin{alignat}{2}
    (x \apart K)& \types &\;& (x\notin K)\label{eq:iapart2}\\
    (x\apart K) \land (L\subseteq K) &\types &\;& (x\apart L)\label{eq:iapart3a}\\
    &\types&\;& (x\apart \emptyset)\label{eq:iapart1}\\
    (x\apart K) \land (x\apart L) &\types&\;& (x\apart K\cup L)\label{eq:iapart3b}\\
    \forall x.((x\apart K) \to (x\notin L)) &\types &\;& \forall x.((x\apart K) \to (x\apart L))\label{eq:iapart4}\\
    (x\apart K) \land \neg(y\apart K) &\types &\;& (x\ineq y),\label{eq:iapartT0rev}\\
    \intertext{and which also satisfies the additional ``forwards Kolmogorov property'' that}
    (x\ineq y) &\types &\;& (x\apart\{y\}) \lor (y\apart \{x\}).\label{eq:iapartT0fwd}
  \end{alignat}
\end{thm}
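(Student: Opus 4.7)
My plan is to unpack the two extra conditions in the standard interpretation, identify a canonical apartness relation, and verify each $\I$-axiom against a corresponding $\L$-axiom. By~\cref{thm:rel}, an $\L$-subset $U$ of $A$ is a strongly disjoint pair $(\pf U, \rf U)$ of $\I$-subsets, so the predicate $(x\lint U)$ unpacks to two $\I$-predicates in $(x,\pf U,\rf U)$. Axiom~\eqref{eq:lapart1} says $(x\lint U)$ is affirmative, so its refutation part is the Heyting negation of its proof part. Combining~\eqref{eq:lapart2} with reflexivity and the containment $U\lsub \wn U$ (which holds by strong disjointness) collapses the proof part to $(x\in\pf U)\land \pfp{x\lint \wn U}$, and since $\wn U$ depends only on $\rf U$ the second factor depends only on $x$ and $\rf U$. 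I would then \emph{define} $(x\apart K) \defeq \pfp{x\lint V_K}$, where $V_K$ is the canonical $\L$-lift of $K$ (namely $\wn(\emptyset,K)$, whose standard-interpretation negative part is $K$), so that $\pfp{x\lint U} \iff (x\in\pf U)\land (x\apart\rf U)$ recovers the full predicate.

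I would then verify the axioms in both directions. Reflexivity of $\lint$ gives~\eqref{eq:iapart2} via the strong disjointness of $V_K$; isotony gives the monotonicity~\eqref{eq:iapart3a}; nullary and binary additivity give~\eqref{eq:iapart1} and~\eqref{eq:iapart3b} (the $\mand$ in binary additivity becomes $\land$ automatically because both conjuncts are affirmative). The strong-disjointness clause of $V_K$ itself yields the reverse Kolmogorov axiom~\eqref{eq:iapartT0rev}. Conversely, given a pre-apartness space satisfying reverse and forward Kolmogorov, the formula $\pfp{x\lint U} \defeq (x\in\pf U)\land(x\apart\rf U)$ (with refutation part the Heyting negation) defines an $\L$-topology whose linear axioms follow routinely from the apartness axioms.

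The two steps I expect to require real work are the transitivity axiom of $\lint$ and the $T_0$ correspondence. For transitivity, I would unpack $(x\lint U)\types (x\lint \lsetof{y | y\lint U})$ by computing the $\L$-subset $\lsetof{y | y\lint U}$: its negative part is (by affirmativity) the Heyting negation of $(y\in\pf U)\land(y\apart\rf U)$, and~\eqref{eq:iapart4}'s hypothesis $\forall x.((x\apart K) \to (x\notin L))$ is exactly what ensures the needed containment of negative parts. For forward Kolmogorov~\eqref{eq:iapartT0fwd}, the $T_0$ axiom $\fa U. ((x\lint U)\liff(y\lint U)) \types (x\leq y)$ contraposes to a linear existential in $U$, which I would witness by $V_{\{y\}}$ or $V_{\{x\}}$ to extract the disjunction $(x\apart\{y\})\lor(y\apart\{x\})$. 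I expect this witnessing step, together with the bookkeeping needed on both sides of the contrapositive, to be the main obstacle.
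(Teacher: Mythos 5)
Your overall strategy coincides with the paper's: use \eqref{eq:lapart1} together with \eqref{eq:lapart2} (whose converse holds automatically since $U\lsub\wn U$ and $x\lint U$ is affirmative) to reduce $\lint$ to a single $\I$-relation between points and the refutative parts of subsets, define $x\apart K$ as the positive part of $x\lint$ applied to the canonical refutative lift of $K$, and match axioms termwise; the treatment of \eqref{eq:iapart2}, \eqref{eq:iapart3a}, \eqref{eq:iapart1}, and \eqref{eq:iapart3b} is as in the paper. But there is a genuine gap at the reverse Kolmogorov property: \eqref{eq:iapartT0rev} does \emph{not} follow from the strong-disjointness clause of $V_K$. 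Strong disjointness of $(\setof{y|y\notin K},K)$ gives only $(x\apart K)\land(y\in K)\types(x\ineq y)$, and $\neg(y\apart K)$ is strictly weaker than $y\in K$ (by \eqref{eq:iapart2} and irreflexivity of $\ineq$ we have $y\in K\to\neg(y\apart K)$, not the converse), so \eqref{eq:iapartT0rev} does not come out. The correct source is the transitivity axiom of $\lint$, which translates to $(x\apart K)\types(x\apart\setof{y|\neg(y\apart K)})$ and is equivalent to \eqref{eq:iapart4} \emph{and} \eqref{eq:iapartT0rev} jointly: applying \eqref{eq:iapart2} to the set $\setof{y|\neg(y\apart K)}$ yields exactly \eqref{eq:iapartT0rev}. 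Assigning transitivity to \eqref{eq:iapart4} alone also breaks the converse direction of the correspondence: to recover $\L$-transitivity you must apply \eqref{eq:iapart4} with $L=\setof{y|\neg(y\apart K)}$, and its hypothesis $\forall x.((x\apart K)\to(x\notin L))$ can only be discharged using \eqref{eq:iapartT0rev}.

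Relatedly, your plan for the forward Kolmogorov property stops short of the key step. The induced inequality is $\exists K.((x\apart K)\land\neg(y\apart K))\lor\exists K.(\neg(x\apart K)\land(y\apart K))$, and witnessing the existential by $V_{\{y\}}$ only proves the easy inclusion $(x\apart\{y\})\to(x\ineq y)$, which is just \eqref{eq:iapart2}. The substantive point is reducing the existential over \emph{arbitrary} $K$ to singletons: if $x\apart K$ and $\neg(y\apart K)$, then $y\in\setof{z|\neg(z\apart K)}$, so the translated transitivity together with isotony \eqref{eq:iapart3a} gives $x\apart\{y\}$. With that reduction one gets $(x\ineq y)\logeq(x\apart\{y\})\lor(y\apart\{x\})$, and \eqref{eq:iapartT0fwd} is precisely the left-to-right inclusion. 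So both of the points you flagged as ``real work'' hinge on the same ingredient you have not supplied, namely the joint equivalence of $\L$-transitivity with \eqref{eq:iapart4} and \eqref{eq:iapartT0rev}.
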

\begin{proof}
  Since $U\lsub \wn U$, and $x\lint U$ is affirmative by~\eqref{eq:lapart1}, the converse of~\eqref{eq:lapart2} also holds.
  Thus $\lint$ is determined by its behavior on refutative $\L$-subsets, hence by one $\I$-relation between points and $\I$-subsets.
  We define $(x \apart K) \defeq (x \lint (\neg K,K))$.
  But note that $(\neg K,K)$ is not an $\L$-subset, and as noted above $\lint$ is determined by its behavior on $\L$-subsets; thus $x\apart K$ is also equivalent to $x \lint \wnchk(\neg K,K) = (\setof{y | y\notin K},K)$.
  In particular, reflexivity of $\lint$ implies~\eqref{eq:iapart2}.

  Statements~\eqref{eq:iapart3a}--\eqref{eq:iapart3b} are straightforward translations of $\L$-isotony and additivity.
  The direct translation of $\L$-transitivity is
  \(
    (x\apart K) \types (x \apart \setof{y | \neg (y\apart K)}),
    \)
  which is equivalent to~\eqref{eq:iapart4} and~\eqref{eq:iapartT0rev} together.
  Our definition of equality yields
  \[(x\ineq y) \logeq \exists K.((x\apart K) \land \neg (y\apart K)) \lor \exists K.(\neg(x\apart K) \land (y\apart K)) . \]
  However, if $x\apart K$ and $\neg (y\apart K)$, then $y\in \setof{z|\neg(z\apart K)}$, whence $x\apart \{y\}$; whereas conversely if $x\apart \{y\}$ then we can take $K\defeq \{y\}$.
  Thus, we have
  \[(x\ineq y) \logeq (x\apart\{y\}) \lor (y\apart \{x\}).\]
  The right-to-left implication follows from~\eqref{eq:iapart2}, while the left-to-right is~\eqref{eq:iapartT0fwd}.
\end{proof}

Thus, $\I$-topologies and apartnesses are special cases of $\L$-topologies.
But in some sense these restrictions on $\L$-topologies miss the point, because virtually no \emph{naturally defined} $\L$-topologies satisfy them!
In the antithesis translation, a general $\L$-topology consists of two relations $\iint$ and $\niint$ between points and complemented subsets (\cref{thm:rel}); and even for Dedekind-metric spaces neither is the Heyting negation of the other, and both parts of a complemented subset are used.

\begin{eg}\label{eg:ltop-met}
  Recall that in \cref{eg:metric-top} we showed that any $\L$-cut-metric space has an underlying $\L$-topology.
  In the antithesis translation, this topology becomes:
  \begin{align*}
    (x \iint (U,\cancel U)) &\defeq \exists \ep^{\dQ_{>0}}. \forall y^X. (((d(x,y)<\ep) \to (y\in U)) \land ((y\in \cancel U) \to (d(x,y)\ge \ep)))\\
    (x \niint (U,\cancel U)) &\defeq \forall \ep^{\dQ_{>0}}. \exists y^X. ((d(x,y)<\ep) \land (y\in \cancel U)).
  \end{align*}
  This is degenerate only in that the relation $\niint$ only depends on $\cancel U$, not on $U$.
  But since both conjuncts in $(x \iint (U,\cancel U))$ remain true under shrinking $\ep$, we can distribute the quantifiers and take a minimum of the two $\ep$'s to write
\begin{multline*}
(x \iint (U,\cancel U)) \logeq (\exists \ep^{\dQ_{>0}}. \forall y^X. ((d(x,y)<\ep) \to (y\in U)))\\ \land (\exists \ep^{\dQ_{>0}}. \forall y^X.((y\in \cancel U) \to (d(x,y)\ge \ep)))
\end{multline*}
where the first conjunct depends only on $U$ and the second only on $\cancel U$.
Thus, we may think of $x \iint (U,\cancel U)$ as ``$x$ is in the interior of $U$ and is apart from $\cancel U$''.
\end{eg}

\begin{figure}
  \centering
\fbox{\scalebox{0.7}{$
\begin{array}{rll}
  &\types& \neg ((x \iint (U,\cancel U)) \land (x \niint (U,\cancel U)))\\
  (x\iint (U,\cancel U)) &\types& x\in U \\ 
  x\in \cancel U &\types& (x\niint (U,\cancel U)) \\
  (x\iint (U,\cancel U)) \land (U\subseteq V) \land (\cancel V \subseteq \cancel U) &\types & (x \iint (V,\cancel V)) \\
  (x \niint (V,\cancel V)) \land (U\subseteq V) \land (\cancel V \subseteq \cancel U) &\types & (x\niint (U,\cancel U)) \\
  (x\iint (U,\cancel U)) \land (x \niint (V,\cancel V)) &\types& \exists y. (y \in U \cap \cancel V) \\
                         &\types & (x \iint (A,\emptyset)) \\
                         &\types & \neg (x \niint (A,\emptyset)) \\
  (x\iint (U,\cancel U)) \land (x \iint (V,\cancel V)) &\types& (x \iint (U\cap V, \cancel U \cup \cancel V)) \\
  (x\iint (U,\cancel U)) \land (x \niint (U\cap V, \cancel U \cup \cancel V)) &\types& (x \niint (V,\cancel V)) \\
  (x \iint (U,\cancel U)) &\types& (x \iint (\setof{y | y \iint (U,\cancel U)}, \setof{y | y \niint (U,\cancel U)})) \\
  (x \niint (\setof{y | y \iint (U,\cancel U)}, \setof{y | y \niint (U,\cancel U)})) &\types& (x \niint (U,\cancel U)) 
\end{array}
$}}
\caption{The antithesis translation of an $\L$-topology}
\label{fig:sitop}
\end{figure}

In the general case,
we can write the axioms of an $\L$-topology in terms of $\iint$ and $\niint$, as in \cref{fig:sitop}.
But they are not very familiar, because we are used to spaces that are degenerate in the manner of \cref{eg:ltop-met}: with $\niint$ depending only on $\cancel U$, and $\iint$ the conjunction of two properties depending on $U$ and $\cancel U$ respectively.
This suggests the following definition.

\begin{defn}\label{defn:unified-top}
  A \textbf{unified topology} on an $\I$-type $A$ consists of three predicates $\iint,\apart,\close$ on $A\times \Omega^A$ such that:
  \allowdisplaybreaks
  \begin{itemize}
  \item $\iint$ is a topology in the usual sense:
    \begin{alignat}{2}
      (x\iint U) &\types&\;& (x\in U) \notag\\
      (x\iint U) \land (U\subseteq V) &\types&\;& (x\iint V)\notag\\
      &\types&\;& (x\iint A) \notag\\
      (x\iint U) \land (x\iint V) &\types&\;& (x\iint U\cap V)\notag\\
      (x \iint U) &\types&\;& (x\iint \setof{y | y \iint U}) \notag
    \end{alignat}
  \item $\apart$ satisfies the following apartness axioms:
    \begin{alignat}{2}
      (x\apart K) &\types&\;& \neg (x\in K) \tag{$*$}\\
      (x\apart K) \land (L\subseteq K) &\types&\;& (x\apart L)\notag\\
      &\types&\;& (x\apart \emptyset) \notag\\
      (x\apart K) \land (x\apart L) &\types&\;& (x\apart K\cup L)\notag\\
      (x \apart K) &\types&\;& (x \apart \setof{y | y \close K})\label{eq:aptcl}
    \end{alignat}
  \item $\close$ satisfies the following ``closure space'' axioms:
    \begin{alignat}{2}
      (x\in K) &\types&\;& (x\close K)\notag\\
      (x\close K) \land (K\subseteq L) &\types&\;& (x\close L) \notag\\
      &\types&\;& \neg (x\close \emptyset) \tag{$*$}\\
      (x\close (K \cup L)) \land (x \apart K) &\types&\;& (x\close L) \label{eq:clapt}\\
      (x \close \setof{y | y \close K}) &\types&\;& (x \close K)\notag
    \end{alignat}
  \item The following compatibility condition holds:
    \begin{alignat}{2}
      (x\iint U) \land (x\close K) &\types&\;& \exists y. (y\in U\cap K). \label{eq:intcl}
    \end{alignat}
  \end{itemize}
\end{defn}
In the presence of the other axioms, either of the axioms ($*$) implies the other.
Note that transitivity for $\apart$~\eqref{eq:aptcl} involves $\close$, while binary additivity for $\close$~\eqref{eq:clapt} (in constructively sensible form derived from $\mor$) involves $\apart$.

\begin{thm}
  Given a unified topology, if we define
  \begin{mathpar}
    (x \iint (U,\cancel U)) \defeq (x \iint U) \land (x \apart \cancel U)\and
    (x \niint (U,\cancel U)) \defeq (x \close \cancel U)
  \end{mathpar}
  then we obtain an $\L$-topology (in the antithesis translation) as in \cref{fig:sitop}.\qed
\end{thm}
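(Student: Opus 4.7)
The plan is to verify the twelve entailments in \cref{fig:sitop} one at a time, unfolding the definitions
\[(x\iint (U,\cancel U)) \;\defeq\; (x\iint U)\land(x\apart\cancel U), \qquad (x\niint (U,\cancel U)) \;\defeq\; (x\close\cancel U)\]
and matching each resulting obligation against the axioms of \cref{defn:unified-top}. Most obligations will decompose cleanly into statements about one predicate at a time; only the axioms that genuinely mix the three predicates will require nontrivial interaction between the pieces.

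First I would dispatch the ``parallel'' translations. Reflexivity of each half (axioms 2--3) follows from reflexivity of $\iint$ and of $\close$. Isotony (axioms 4--5) combines isotony of $\iint$ with anti-isotony of $\apart$ on the affirmative side, and isotony of $\close$ on the refutative side. The nullary clauses (axioms 7--8) invoke the nullary axioms for $\iint$ and $\apart$ together with the ($*$) axiom $\neg(x\close\emptyset)$. The affirmative additivity clause (axiom 9) combines the binary-additivity axioms of $\iint$ and of $\apart$, and transitivity for $\niint$ (axiom 12) is just transitivity of $\close$.

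Next I would handle the three axioms that genuinely cross between predicates. Disjointness (axiom 1) unfolds to the impossibility of $(x\iint U)\land(x\apart\cancel U)\land(x\close\cancel U)$; applying \eqref{eq:clapt} with $L=\emptyset$ produces $x\close\emptyset$, contradicting ($*$). The compatibility axiom 6 is a direct rewriting of \eqref{eq:intcl}. The ``mixed'' binary-additivity axiom 10 reads $(x\apart\cancel U)\land(x\close(\cancel U\cup\cancel V))\types (x\close\cancel V)$, which is precisely \eqref{eq:clapt}.

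The hard part will be transitivity for $\iint$ (axiom 11), which asks
\[(x\iint U)\land(x\apart\cancel U) \;\types\; \bigl(x\iint \setof{y | (y\iint U)\land(y\apart\cancel U)}\bigr) \land \bigl(x\apart\setof{y | y\close\cancel U}\bigr).\]
The second conjunct is exactly the apart-transitivity axiom \eqref{eq:aptcl}. For the first, transitivity of $\iint$ yields $x\iint\setof{y | y\iint U}$, so by binary additivity of $\iint$ it suffices to produce a second neighbourhood of $x$ contained in $\setof{y | y\apart\cancel U}$. Producing this neighbourhood is the crux: it amounts to an ``openness of apartness'' principle $(x\apart K)\types(x\iint\setof{y | y\apart K})$, which holds in the motivating metric example (\cref{eg:ltop-met}) by the familiar halving argument but does not appear literally among the axioms of \cref{defn:unified-top}. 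My proposal is to promote it to an extra clause of the definition, so that \cref{defn:unified-top} exactly matches the standard interpretation of the $\L$-topology transitivity axiom; with this clause in hand, a single further application of binary additivity of $\iint$ closes the argument.
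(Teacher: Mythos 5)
The paper states this theorem with its end-of-proof mark attached to the statement itself, i.e.\ it supplies no proof at all, so there is no argument of the paper's to compare yours against. Your verification of the routine clauses is correct, as is your handling of the three genuinely mixed ones: disjointness via \eqref{eq:clapt} with $L=\emptyset$ against $\neg(x\close\emptyset)$, the compatibility clause via \eqref{eq:intcl}, and the mixed additivity clause via \eqref{eq:clapt}. The refutative transitivity clause and the second conjunct of the affirmative one do reduce to $\close$-transitivity and \eqref{eq:aptcl} exactly as you say.

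Your diagnosis of the remaining conjunct is, as far as I can tell, a genuine finding about \cref{defn:unified-top} rather than a failure of imagination on your part. Structurally, every axiom of \cref{defn:unified-top} whose conclusion is an $\iint$-statement has only $\iint$-statements among its hypotheses, so from $(x\iint U)\land(x\apart \cancel U)$ one can never reach $x\iint\setof{y | (y\iint U)\land(y\apart \cancel U)}$ unless $\setof{y | y\apart \cancel U}$ already contains a neighbourhood of $x$ derivable from $x\iint U$ alone. This is not merely a syntactic obstruction: take $A=\dN\cup\{\infty\}$ with the one-point-compactification topology, let $\iint$ and $\close$ be the topological neighbourhood and closure relations, and set $n\apart K$ iff $K=\emptyset$ and $\infty\apart K$ iff $K\subseteq\{1\}$. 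All axioms of \cref{defn:unified-top} hold, yet with $U=A\setminus\{1\}$ and $\cancel U=\{1\}$ one finds $\setof{y | (y\iint U)\land(y\apart\cancel U)}=\{\infty\}$, which is not a neighbourhood of $\infty$; so the transitivity row of the figure fails. Hence the theorem as stated does need an extra hypothesis, and your proposed axiom $(x\apart K)\types(x\iint\setof{y | y\apart K})$ --- which holds for cut-metrics by the halving argument, and which is forced on any $\apart$ arising from an $\L$-topology by the affirmative half of its transitivity axiom --- repairs it via exactly the two applications of binary additivity and $\iint$-transitivity that you describe.
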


\noindent
Not every $\L$-topology has this form, but those coming from cut-metrics do, with
\begin{align*}
  (x \iint U) &\defeq \exists \ep^{\dQ_{>0}}. \forall y^X. ((d(x,y)<\ep) \to (y\in U))\\
  (x \apart K) &\defeq \exists \ep^{\dQ_{>0}}. \forall y^X.((y\in K) \to (d(x,y)\ge \ep))\\
  (x \close K) &\defeq \forall \ep^{\dQ_{>0}}. \exists y^X. ((d(x,y)<\ep) \land (y\in K)).
\end{align*}

\begin{eg}
  Recall the Hausdorff cut-metric on $\P X$ from \cref{thm:hausdorff}:
  \[ (d(A,B) \llt q) \logeq \ex q'. ((q' \llt q) \mand \fa a^X. ((a \lin A) \imp \ex b^X. ((b\lin B) \mand (d(a,b) \llt q')))). \]
  In the antithesis translation, if $A,B$ are affirmative, then:
  \begin{itemize}
  \item $d(A,B) < q$ means that there is a $q'<q$ such that for any point $a\in A$, there exists a point $b\in B$ with $d(a,b)<q'$.
  \item $q\le d(A,B)$ means for any $q'<q$, there is a point $a\in A$ such that every point $b\in B$ has $q' \le d(a,b)$.
  \end{itemize}
  Thus, in this case:
  \begin{itemize}
  \item $A\iint \cU$ means there is an $\ep>0$ such that $\cU$ contains all subsets $B$ for which there is an $\ep'<\ep$ such that every point of $A$ is $\ep$-close to some point of $B$.
  \item $A \apart \cK$ means there is an $\ep>0$ such that for every $B\in\cK$ and $\ep'<\ep$ there is a point of $A$ that is at least $\ep'$-far from every point of $B$.
  \item $A \close \cK$ means for any $\ep>0$ there is a $B\in \cK$ and an $\ep'<\ep$ such that every point of $A$ is $\ep'$-close to a point of $B$.
  \end{itemize}
\end{eg}

Thus, the antithesis translation suggests that rather than taking one of neighborhoods, apartness, or nearness as primary, it is more natural to have all structures in parallel.
Of course, \cref{defn:unified-top} is rather unwieldy;
but \cref{defn:topology} is quite simple, suggesting it may be easier to just stay in affine logic.
In the next section we consider this possibility more seriously.

\section{Towards affine constructive mathematics}
\label{sec:lcm}

So far, we have viewed affine logic as a tool for producing definitions and theorems in intuitionistic logic, through the antithesis translation.
However, there are other reasons one might care about the ``affine constructive mathematics'' we have started developing in this paper.
One is that it admits other interesting models.

\begin{eg}
  Linear logicians are familiar with many $\ast$-autonomous categories, such as coherence spaces and phase spaces.
  As in \cref{eg:cplt-hyd}, any complete semicartesian $\ast$-autonomous category with Seely comonad yields an affine hyperdoctrine over $\bSet$.
  I expect there are ``realizability linear triposes'' coming from linear combinatory algebras~\cite{ahs:goi-lca,al:linrealiz}.
  In addition, Dialectica constructions~\cite{depaiva:dialectica-like} also act on fibrations~\cite{hofstra:dialectica}.
  However, many of these models are not semicartesian, hence move beyond affine logic to general linear logic.
\end{eg}

\begin{eg}\label{eg:bool-top}
  Any Boolean algebra is semicartesian and $\ast$-autonomous, with $\mand\logeq\aand$, $\mor\logeq\aor$, and $\oc P\defeq P$.
  Thus, linear logic also specializes directly to classical logic. 

  More generally, on a Boolean algebra we can take any meet-preserving comonad to be $\oc$, such as the interior operator of a topology acting on a powerset.
  Thus, any classical topological space $X$ gives rise to an affine tripos whose propositions are subsets of $X$, with the affirmative and refutative ones being open and closed respectively.
  This relates to the ``modal'' view of sheaves from~\cite{ak:top-fomodall,akk:topos-homodal}.
\end{eg}

\begin{eg}\label{eg:exotic}
  \textbf{\luk{}ukasiewicz logic} is a semicartesian $\ast$-autonomous structure on the unit interval $[0,1]$, with $\top=1$, $\bot=0$, and
  \begin{alignat*}{3}
    P\aand Q &= \min(P,Q) &\qquad
    \fa x. P(x) &= \textstyle\inf_x P(x) &\qquad
    P\mand Q &= \max(0,P+Q-1) \\
    P\aor Q &= \max(P,Q) &\qquad
    \ex x. P(x) &= \textstyle\sup_x P(x) &\qquad
    P\mor Q &= \min(1,P+Q)\\
    \nt P &= 1-P &\qquad &&
    P\imp Q &= \min(1,1-P+Q).
  \end{alignat*}
  It also admits a Seely comonad defined by 
  $\oc 1 = 1$ and $\oc P = 0$ for $P<1$.
  An $\L$-set in this model is precisely a \emph{metric space} with all distances $\le 1$.
  (The distance $d(x,y)$ is actually the \emph{inequality} $x\lneq y$.)
  It is strong iff it is an ultrametric space, and affirmative iff it is discrete. 
  Functions are nonexpansive maps, and anafunctions (\cref{thm:lin-anafun}) are nonexpansive maps between metric completions.
  The $\L$-set $\Omega=[0,1]$ has its usual metric $|x-y|$, and the function set $A\to B$ has the supremum metric.
  For a fixed affirmative $\L$-set $A$, the $\L$-subsets of $A$ are \emph{fuzzy sets} with universe $A$, with their usual induced metric.
  Finally, (closed upper) $\L$-cuts $x\ocin\cC$ are non-decreasing right-continuous functions $\dR\to\dR$; hence bounded $\L$-cuts are cumulative distribution functions of random variables, with Dedekind $\L$-reals corresponding to constant random variables.
\end{eg}

However, what about the philosophical constructivist, in the tradition of Bishop, say?
I believe that one can also motivate affine constructive mathematics on purely philosophical grounds;
what follows is one attempt.

We begin by agreeing with Brouwer's critique of excluded middle, ``$P$ or not $P$'', as a source of non-constructivity.
However, the classical mathematican's belief in this law is not contentless; one may say that the constructivist and the classicist are using the word ``or'' to mean different things.
The constructivist using intuitionistic logic expresses the classical mathematician's ``or'' as $\neg\neg (P\lor Q)$; but the classical mathematician may rebel against the implication that she is unconsciously inserting double negations everywhere.
A more even-handed approach is to stipulate \emph{both} kinds of ``or'' on an equal footing: the constructivist's $P\aor Q$ says that we know which of $P$ or $Q$ holds; while the classicist's $P\mor Q$ says\dots something else.

Before addressing exactly what it says, we consider negation.
Intuitionistically, $\neg P$ means that any proof of $P$ would lead to an absurdity.
But after this definition, one immediately observes that it is not very useful and should be avoided.
So why did we bother defining negation in that way?
A more useful notion of ``negation'' is the \emph{polar opposite} of a statement, i.e.\ the most natural and emphatic way to disprove it.
The opposite of ``every $x$ satisfies $P(x)$'', in this sense, is ``there is an $x$ that fails $P(x)$'': a respectable \emph{constructive} disproof of a universal claim should provide a counterexample.
Similarly, the opposite of ``$P$ and $Q$'' is ``either $P$ fails or $Q$ fails'', and so on.
This negation is involutive, with strict De Morgan duality for quantifiers, conjunctions, and disjunctions.

The most natural way that ``if $P$ then $Q$'' can fail is if $P$ is true and $Q$ is false.
But the opposite of ``$P$ and not $Q$'' is ``$Q$ or not $P$'', so the involutivity of negation means that the latter should be equivalent to ``if $P$ then $Q$''.
In particular, the tautology ``if $P$ then $P$'' is equivalent to ``$P$ or not $P$'', i.e.\ excluded middle.
Thus, the ``or'' appearing here must be the classical one $\mor$.
That is, ``if $P$ then $Q$'' (which we may as well start writing as $P\imp Q$) is equivalent to $\nt P \mor Q$.

This tells us what $P\mor Q$ means: it means $\nt P \imp Q$, i.e.\ if $P$ fails then $Q$ must be true.
But any sort of disjunction is symmetric, so $P\mor Q$ should also be equivalent to $\nt Q \imp P$.
Thus, contraposition must hold: $P\imp Q$ is equivalent to $\nt Q \imp\nt P$.
This, in turn, implies that we can do proofs by contradiction.

Proof by contradiction is generally considered non-constructive.
For instance, a constructive proof of ``there exists an $x$ such that $P(x)$'' ought to specify $x$, whereas proof by contradition seems to subvert this.
But does it really?
If we try to prove ``there exists an $x$ such that $P(x)$'' by contradiction, we would begin by assuming ``for all $x$, not $P(x)$''\dots and we can only \emph{use} that assumption by specifying an $x$!

Non-constructivity only enters if we use that assumption \emph{more than once}, giving different values of $x$, and derive a contradiction without determining which value of $x$ satisfies $P$.
Thus, we can remain constructive in the presence of proof by contradiction by imposing an ``affinity'' restriction that each hypothesis can be used at most once.\footnote{Or a ``linearity'' restriction that it must be used \emph{exactly} once; but this is harder to justify philosophically, since affinity is sufficient to ensure constructivity.}
This is essentially the content of Girard's comment:
\begin{quote}\small
  \dots take a proof of the existence or the disjunction property; we use the fact that the last rule used is an introduction, which we cannot do classically because of a possible contraction.
  Therefore, in the\dots intuitionistic case, $\types$ serves to mark a place where contraction\dots is forbidden\dots.
  Once we have recognized that the constructive features of intuitionistic logic come from the dumping of structural rules on a specific place in the sequents, we are ready to face the consequences of this remark: the limitation should be generalized to other rooms, i.e.\ weakening and contraction disappear.~\cite[p4]{girard:ll}
\end{quote}

We now let $\aand$ and $\mand$ be the De Morgan duals of $\aor$ and $\mor$, and calculate
\begin{equation*}
  (P \mand Q) \imp R
  \;\logeq\; ({\nt P \mor \nt Q}) \mor R
  \;\logeq\; \nt P \mor (\nt Q \mor R)
  \;\logeq\; P \imp (Q \imp R).
\end{equation*}
Thus, to maintain the ``deduction theorem'' that we prove $Q\imp R$ by proving $R$ with $Q$ as an extra hypothesis, we must implicitly combine hypotheses with $\mand$.

The behavior of $\aand$ can be deduced by duality: a hypothesis $P\aand Q$ may as well be used by contradiction, requiring us to show $\ntp{P\aand Q} \logeq \nt P \aor \nt Q$; and since this is the constructive ``or'' it requires us to either show $\nt P$ or $\nt Q$.
Thus, to use a hypothesis $P\aand Q$ we must either use $P$ or $Q$, but not both.
Note the utter reversal of the historical origin of the linear connectives:
\begin{quote}\small
  The most hidden of all linear connectives is \emph{par} [$\mor$], which came to light purely formally as the De Morgan dual of [$\mand$] and which can be seen as the effective part of a classical disjunction.~\cite[p5]{girard:ll}.
\end{quote}

Finally, the linearity/affinity restriction is sometimes too onerous.
For instance, the axioms of a group must be used many times in the proof of any theorem in group theory.
Since we are here regarding the affinity restriction as simply a \emph{syntactic discipline} to which we subject ourselves in order to maintain constructivity, we may allow ourselves to ignore it in certain cases as long as we keep track of where this happens and prevent ourselves in some other way from introducing nonconstructivity in those cases.
This is the purpose of the modality $\oc$: it marks hypotheses, like the axioms of a group, that we allow ourselves to use more than once.
The price we pay is that when \emph{checking} an axiom of the form $\oc P$, we cannot use proof by contradiction (or more precisely, if we try to do so, the hypothesis we get to contradict is not $\nt P$ but the weaker $\wn(\nt P) \defeq \ntp{\oc P}$).
But this is rarely bothersome: when was the last time you saw someone prove that something is a group by assuming that it isn't and deriving a contradiction?
(See also \cref{rmk:affirm-axioms}.)

Whether or not the reader finds the foregoing discussion convincing, I believe it proves that it is \emph{possible} to argue for affine logic, rather than intuitionistic logic, on philosophical constructivist grounds.
Ultimately, of course, the proof of the pudding is in the eating: whether affine constructive mathematics can stand on its own depends on how much useful mathematics can be developed purely in affine logic.
In this paper we have only scratched the surface by exploring a few basic definitions, with the antithesis translation as a guide for their correctness.

\bibliographystyle{alpha}
\bibliography{all}

\end{document}